\numberwithin{equation}{section}		%For at dele ligningerne op på sectionsniveau
\newtheorem{theorem}{Theorem}[section]
\newtheorem{prop}[theorem]{Proposition}
\newtheorem{corollary}[theorem]{Corollary}
\newtheorem{lemma}[theorem]{Lemma}
\theoremstyle{definition}				%Every environment written below is non-italics.
\newtheorem{remark}[theorem]{Remark}
\newtheorem{definition}[theorem]{Definition}%[section]
\newtheorem*{definition*}{Definition}
\theoremstyle{theorem}
\newcounter{mtheorem}
\newtheorem{mtheorem}[mtheorem]{Theorem}
\newcommand{\Z}{\ensuremath{\mathbb{Z}}}
\newcommand{\R}{\ensuremath{\mathbb{R}}}
\newcommand{\W}{\ensuremath{\mathcal{W}}}
\newcommand{\OO}{\ensuremath{\mathcal{O}}}
\newcommand{\C}{\ensuremath{\mathbb{C}}}
\newcommand{\CP}{\ensuremath{\mathbb{CP}}}
\newcommand{\m}{\ensuremath{\mathfrak{m}}}
\renewcommand{\k}{\mathbf{k}}
\newcommand{\morm}[1]{{| #1 |}}
\newcommand{\gen}[1]{{\left\langle #1 \right\rangle}}
\renewcommand{\leq}{\leqslant }
\renewcommand{\geq}{\geqslant }
\renewcommand{\phi}{\varphi}
\renewcommand{\rho}{\varrho}
\newcommand{\sff}{\mathrm{I\!I}}
\let\H\relax
\DeclareMathOperator{\H}{H}
\let\Re\relax
\DeclareMathOperator{\Re}{Re}
\let\Im\relax
\DeclareMathOperator{\Im}{Im}
\DeclareMathOperator{\Span}{span}
\DeclareMathOperator{\diam}{diam}
\DeclareMathOperator{\tr}{Tr}
\DeclareMathOperator{\Rm}{Rm}
\DeclareMathOperator{\Ric}{Ric}
\DeclareMathOperator{\vol}{Vol}
\DeclareMathOperator{\dvol}{dvol}
\DeclareMathOperator{\dA}{dA}
\DeclareMathOperator{\area}{Area}
\DeclareMathOperator{\length}{Length}
\DeclareMathOperator{\diag}{diag}
\DeclareMathOperator{\eucl}{Eucl}
\DeclareMathOperator{\Div}{div}
\DeclareMathOperator{\loc}{loc}
\newcommand{\del}{\partial}
\newcommand{\delbar}{\overline{\partial}}
\newcommand{\spliturl}[1]{%
	\StrBefore{#1}{,}[\urlfirstpart]%
	\StrBehind{#1}{,}[\urlsecondpart]%
	MR\urlfirstpart, Zbl \urlsecondpart%
}
\title{Mass Inequality and Stability of the Positive Mass Theorem For Kähler Manifolds}
\author{Johan Jacoby Klemmensen}
\begin{document}

	\maketitle
	
	\begin{abstract}
		We prove an integral inequality and two stability results for the ADM mass on AE Kähler manifolds of all complex dimensions. The inequality bounds the ADM mass from below by an integral of the scalar curvature and the Hessian of certain holomorphic coordinate functions arising from the complex coordinates at infinity. Using this, we first prove a stability result for any sequence of AE Kähler manifolds with ADM mass converging to zero. We conclude that, for any such sequence, there exist subsets of each with vanishing boundaries in the limit such that the complements converge to Euclidean space in the pointed Gromov-Hausdorff sense. This gives the first stability result of the Positive Mass Theorem for Kähler manifolds, or more generally, of manifolds without strong curvature or volume conditions or for a very explicit family of metrics in real dimensions greater than three. If we furthermore impose a uniform lower bound on the Ricci curvature, the second stability theorem shows the same result without taking the complement of a sequence of vanishing sets. Finally, we find three new families of AE Kähler manifolds with vanishing mass in the limit for which the stability results apply.
	\end{abstract}

	\tableofcontents
	
	\section{Introduction}
	
	\subsection{Background}
	
	Let $(M^n, g)$ be a complete Riemannian manifold of dimension $n \geq 3$. We say that $(M, g)$ is \textit{asymptotically Euclidean} (AE) if the following is true: There exists a compact set $K \subset M$ such that $M \setminus K = \bigsqcup_{k=1}^{k_0} M_i$, where each \textit{end} $M_i \cong \mathbb{R}^n \setminus \overline{B_1(0)}$. Furthermore, on each end, there exists a coordinate chart $(x_1, \dots, x_n)$ such that
	\begin{equation}\label{AERiemannian}
		|\del^k(g_{ij}-\delta_{ij})| = \OO(|x|^{1-\frac{n}{2}-\delta-k}),\; \quad \delta>0, \; k=0,1,2.
	\end{equation}
	If the scalar curvature $R_g$ is integrable, Bartnik \cite{bartnikmass1986} and Chru\'sciel \cite{chruscielBoundary1986} showed that the \textit{ADM mass} $\m(g)$ for each end, defined as
	\begin{equation}\label{ADMMass}
		\m(g) \coloneqq \lim_{\rho\to\infty}\frac{\Gamma(\frac{n}{2})}{4(n-1)\pi^{n/2}} \int_{S_\rho} \left(g_{kl,k}-g_{kk,l} \right)\nu_l \dA_{\eucl},
	\end{equation}
	is well-defined and independent of the chosen asymptotically Euclidean coordinate chart at infinity. Here, $\nu$ is the outer Euclidean normal of the Euclidean sphere $S_\rho$ of radius $\rho$ in the asymptotic coordinates.\\
	
	The conjectured Positive Mass Theorem was first proven by Schoen and Yau \cite{schoenproof1979, schoenProof1981} and Witten \cite{wittennew1981} for $\dim M \leq 7$ and $M$ spin, respectively. It states the following: if the scalar curvature is nonnegative and integrable, then $\mathfrak{m}(g) \geq 0$ for every end, and $\mathfrak{m}(g) = 0$ for some end if and only if $(M, g)$ is isometric to $(\mathbb{R}^n, g)$. Hein-LeBrun later proved the theorem for AE \textit{Kähler} manifolds \cite{heinMass2016}, which is important for this work as we will study the \textit{stability} of the Positive Mass Theorem for AE Kähler manifolds.\\
	
	The question of stability is as follows: if the ADM mass is small, is $(M, g)$ close to $(\mathbb{R}^n, g)$ in some sense? The answer was complicated by the discovery of a sequence of AE rotationally symmetric three-dimensional manifolds $(M_i^3, g_i)$ such that $\mathfrak{m}(M^3_i, g_i) \to 0$, but $(M^3_i, g_i)$ develops infinitely deep gravitational wells where the GH-distance diverges \cite{leeStability2014}. It then became clear that other weaker norms need to be considered, of which Sormani-Wenger's \textit{intrinsic flat distance} is probably the most utilized. In this direction, we mention the papers \cite{leeStability2014, leflochnonlinear2015, brydenStability2021, allenIntrinsic2021, huangIntrinsic2022, pachecoIntrinsic2023}.\\

	Gromov \cite{gromovDirac2014} and Sormani \cite{sormaniConjectures2023} formulated an analogous question for the Geroch conjecture on tori, asking if a sequence of tori with a uniform lower bound on the scalar curvature going to zero converges to flat tori in the intrinsic flat distance. This turned out not to be true, as shown in the paper by Lee, Naber, and Neumayer \cite{leedp2023} on the $d_p$-distance: they built sequences of smooth tori $(\mathbb{T}^n, g_i)$ for $n \geq 4$ with scalar curvatures $R_{g_i} \geq -\frac{1}{i}$ such that $(\mathbb{T}^n, g_i)$ converges to the flat metric on $\mathbb{T}^n$ in the $d_p$-distance. In contrast, the intrinsic flat and Gromov-Hausdorff (GH) limits degenerate to a lower-dimensional torus or, depending on the sequence, even a point. Later work by Kazaras and Xu \cite{kazarasDrawstrings2023} provided a family of tori in dimension $n = 3$ with curvature bound as for Lee, Naber, and Neumayer, but with intrinsic flat distance degenerating to a flat torus with a circle pulled to a point. These examples indicate that the intrinsic flat distance may be too strong for several applications.\\
	
	Another direction in the investigation of stability came after Bray, Kazaras, Khuri, and Stern \cite{brayHarmonic2022} proved their integral inequality, inspired by the inequality of Stern \cite{sternScalar2022}. This inequality bounds the ADM mass from below by an integral involving the scalar curvature and the Hessian of certain harmonic functions. Using this, Dong \cite{dongstability2023a}, and later Dong and Song \cite{dongStability2023}, proved that any sequence of AE Riemannian three-manifolds with ADM mass going to zero converges in the Gromov-Hausdorff topology to Euclidean three-space after cutting out sets with vanishing boundary in the limit. Similar results were also proven using this integral inequality, although under other assumptions, including a curvature bound, by Allen, Bryden, and Kazaras \cite{allenStability2022} and Kazaras, Khuri, and Lee \cite{kazarasStability2021} on three-manifolds. For earlier results establishing $L^2$-estimates of the curvature outside of a compact set, we mention \cite{brayCurvature2002, finsterCurvature2002, finsterLevel2009, leenearequality2009}.
	
	\subsection{Structure of the Paper}
	
In this work, we prove two stability results for the Positive Mass Theorem and a new integral inequality for the ADM mass on AE Kähler manifolds $(X^{2m}, g, J)$. These are the first stability results of the Positive Mass Theorem for Kähler manifolds. More generally, Theorem \ref{MCorollaryStabNoRic} is the first stability theorem in higher dimensions without strong assumptions on the form of the metric (e.g., \cite{huangIntrinsic2017, brydenStability2021, brydenStability2020}) or curvature/volume conditions (e.g., \cite{allenVolume2024, allenIntrinsic2021}) in dimensions greater than three.

The stability results follow from the following integral inequality:

\begin{mtheorem}[Theorem \ref{TheoremMassBound}]\label{MTheoremMassBound}
	Let $(X^{2m},g,J)$ be an asymptotically Euclidean Kähler manifold with nonnegative and integrable scalar curvature $R_g$. Let $z = x_1 + ix_2$ be one of the holomorphic coordinate functions from Theorem \ref{BiholomorphismAsympt}. Then
	\begin{equation}\label{mMassFormula}
		\m(g) \geq \frac{(m-1)!}{4(2m-1)\pi^m} \int_X \frac{1}{2}|\nabla^2 x_1|^2 + \frac{1}{2}|\nabla^2 x_2|^2 + \morm{\nabla x_1}^2 R_g \dvol_g.
	\end{equation}
\end{mtheorem}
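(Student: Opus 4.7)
The plan is to derive \eqref{mMassFormula} by applying a Bochner--Weitzenb\"ock identity to the real and imaginary parts of the holomorphic coordinate $z = x_1+i x_2$, and then matching the resulting boundary integral with the ADM mass via the asymptotic expansion \eqref{AERiemannian}. The first observation is that, since $z$ is holomorphic on the K\"ahler manifold $(X,g,J)$, both $x_1$ and $x_2$ are globally harmonic and satisfy $\nabla x_2 = \pm J\nabla x_1$; parallelism of $J$ then forces the corresponding identity at the level of Hessians, so pointwise $|\nabla x_1| = |\nabla x_2|$ and $|\nabla^2 x_1|^2 = |\nabla^2 x_2|^2$. This already explains why the symmetric combination $\tfrac12|\nabla^2 x_1|^2+\tfrac12|\nabla^2 x_2|^2$ is the natural Hessian contribution in \eqref{mMassFormula}. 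A further K\"ahler feature I would expect to exploit is that $x_1$ is pluriharmonic (being the real part of a holomorphic function), so that $\nabla^2 x_1$ anticommutes with $J$ as an endomorphism of $TX$.

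The central step is to obtain a divergence identity whose interior is the right-hand side of \eqref{mMassFormula}. Applying the standard Bochner formula
\begin{equation*}
\tfrac12\Delta|\nabla x_j|^2 = |\nabla^2 x_j|^2 + \Ric(\nabla x_j,\nabla x_j), \qquad j=1,2,
\end{equation*}
and summing, the K\"ahler invariance $\Ric(JX,JY)=\Ric(X,Y)$ collapses the Ricci terms to $2\Ric(\nabla x_1,\nabla x_1)$. The crucial and delicate task is then to upgrade this single-direction Ricci contraction to the scalar-curvature term $|\nabla x_1|^2 R_g$, up to a manifestly nonnegative remainder that can be dropped to give the inequality. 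I would try to accomplish this via a K\"ahler Weitzenb\"ock identity for the holomorphic $(1,0)$-form $dz$ (which is both $d$-closed and $d$-coclosed, so that $\nabla^*\nabla dz$ is controlled by the Ricci form acting on $(1,0)$-forms), combined with the Hein-LeBrun mass formula from \cite{heinMass2016} expressing $\m(g)$ for an AE K\"ahler manifold in terms of an integral of the scalar curvature; the combination should produce the missing scalar-curvature contribution after a pointwise rearrangement that exploits the pluriharmonicity of $x_1$.

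Finally, I would integrate the resulting divergence identity over $\Omega_\rho \coloneqq \{p\in X:|x(p)|\leq\rho\}$ and apply Stokes' theorem; using \eqref{AERiemannian} together with the asymptotic form of $x_1, x_2$ as flat coordinates at infinity (Theorem \ref{BiholomorphismAsympt}), the boundary flux on $S_\rho$ can be expanded in the asymptotic coordinates and, after the usual cancellations among the metric perturbation terms, shown to converge to $\tfrac{4(2m-1)\pi^m}{(m-1)!}\,\m(g)$ as $\rho\to\infty$. The constant $\tfrac{(m-1)!}{4(2m-1)\pi^m}$ in \eqref{mMassFormula} then comes out of the area of the round $S^{2m-1}$ together with the $\Gamma$-normalisation in \eqref{ADMMass}. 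The main obstacle I expect is the Ricci-to-scalar conversion described above: the ordinary Bochner identity on a harmonic function only delivers a Ricci contraction in the direction of $\nabla x_1$, which is in general incomparable to $|\nabla x_1|^2 R_g$, and producing the correct scalar-curvature weight is exactly where the K\"ahler hypothesis must be used in an essential way. The boundary matching in the last step is tedious but otherwise routine.
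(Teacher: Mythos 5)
You have located the crux correctly --- the whole difficulty is converting the Bochner term $2\Ric(\nabla x_1,\nabla x_1)$ into $\morm{\nabla x_1}^2 R_g$ --- but the mechanism you propose for it does not work, and it is not a pointwise rearrangement in the paper either. A Weitzenb\"ock identity for the form $dz$ still only produces Ricci contractions in the two normal directions $\nabla x_1,\nabla x_2$; the remaining $2m-2$ directions of the scalar-curvature trace are genuinely missing, and by the Gauss equation on the level set $\Sigma_s=\{z_1=s\}$ the discrepancy is exactly
\begin{equation*}
\tfrac12\morm{\nabla x_1}^2\bigl(R_g - R_{\Sigma_s} - |\sff|^2 + 2\gen{\Rm_X(\hat x_1,\hat x_2)\hat x_2,\hat x_1}\bigr) = \Ric(\nabla x_1,\nabla x_1)+\Ric(\nabla x_2,\nabla x_2),
\end{equation*}
whose correction terms are \emph{not} manifestly of one sign, so there is no ``nonnegative remainder that can be dropped.'' The paper handles them with two global arguments that are absent from your proposal: (i) after applying the complex coarea formula, the intrinsic curvature integral $\int_{\Sigma_s}R_{\Sigma_s}$ is controlled by applying the Hein--LeBrun mass formula \emph{to each level set} (itself an AE K\"ahler manifold of complex dimension $m-1$), using the sign of the Chern-class pairing to turn it into a boundary integral on $T_L$ --- this is the K\"ahler substitute for the Gauss--Bonnet step of Bray--Kazaras--Khuri--Stern; and (ii) the $|\sff|^2$ and normal sectional-curvature terms are bounded by a second-variation-of-area argument, exploiting that the $\Sigma_s$ are calibrated (hence area-minimizing) complex submanifolds, in the spirit of Schoen--Yau. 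Neither step is a local identity.

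Two further points. The factor $\tfrac12$ on each Hessian in \eqref{mMassFormula} is not explained by the symmetry $|\nabla^2 x_1|=|\nabla^2 x_2|$ as you suggest: the Bochner step produces the full coefficient $1$ on each, and half of it is then consumed by the second-variation estimate in (ii), which also supplies a boundary term needed to repair a factor-of-$\tfrac12$ deficit in the mass integrand. Finally, the exhaustion domain cannot simply be the ball $\{|x|\leq\rho\}$: because the boundary analysis must separate the $z_1$-directions (where the $D^{\pm}_{i,L}$ faces carry the $\partial_\nu\morm{\nabla x_1}$ flux) from the transverse directions (where $T_L$ collects the level-set boundary contributions from (i) and (ii)), the paper uses the product region $\{|x_1|,|x_2|\leq L\}\times\{|(x_3,\dots,x_{2m})|\leq L\}$. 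As written, your outline would stall exactly at the Ricci-to-scalar conversion you flagged.
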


The formula resembles the result \cite[Theorem 1.2]{brayHarmonic2022} on three-dimensional AE Riemannian manifolds. However, \eqref{mMassFormula} is an integral over the entire manifold, and this global control allows for stronger stability statements than in the three-dimensional Riemannian case (see Theorem \ref{MCorollaryStabRicciBound} and the discussion afterward).\\

From the integral inequality \eqref{mMassFormula}, the rigidity of the Positive Mass Theorem for Kähler manifolds follows easily (Corollary \ref{CorollaryPMT}). The proof of Theorem \ref{MTheoremMassBound} includes conceptual ideas from three different proofs of the Positive Mass Theorem. The method of using harmonic coordinate functions by Bray-Kazaras-Khuri-Stern \cite{brayHarmonic2022} provides the basic structure. However, their existence proof of appropriate harmonic coordinates, which is an essential part of their paper, does not transfer to Kähler manifolds. We instead replace them with holomorphic coordinates on $X^{2m}$ arising from the complex coordinates at infinity by the AE condition. This provides a characterization of AE Kähler manifolds and is a minor strengthening of work in Hein-LeBrun's proof of the Positive Mass Theorem \cite{heinMass2016}. Another key ingredient in proving Theorem \ref{MTheoremMassBound} is recognizing that the integral of the scalar curvature can generally only be realized as a boundary integral in two cases: in real two-dimensional manifolds using the Gauss-Bonnet Theorem as in \cite{brayHarmonic2022}, or on Kähler manifolds, which we employ here. This realization is an essential step in transforming the right-hand side of \eqref{mMassFormula} to the boundary integral in the ADM mass on the left-hand side.\\
	
Finally, as the level sets of holomorphic functions are real codimension-two manifolds, additional integrals of curvature components in the normal directions over level sets appear, contrary to \cite{brayHarmonic2022}. Furthermore, their method does not provide the full ADM mass in the Kähler case. However, since level sets of holomorphic functions on a Kähler manifold are minimal and regular for almost any level set, we employ a minimal surface argument, echoing the variational argument of Schoen and Yau \cite{schoenproof1979}. This method bounds the integrals by the missing terms in the ADM mass, completing the proof.\\

Given the integral estimate, we prove two new stability results for the Positive Mass Theorem. The first theorem applies to general sequences of AE Kähler manifolds and requires the excision of sets with vanishing boundaries in the limit. It also potentially accounts for the presence of gravitational wells known to appear in the Riemannian case:

\begin{mtheorem}[Theorem \ref{CorollaryStabNoRic}]\label{MCorollaryStabNoRic}
	Let $(X_i^{2m},g_i, J_i)$ be a sequence of AE Kähler manifolds of fixed complex dimension with nonnegative and integrable scalar curvatures, and suppose that the ADM masses $\m(g_i) \to 0$ as $i\to\infty$. Then for all $i$, there exists a domain $Z_i$ such that $M_i\setminus Z_i$ converges in the pointed Gromov-Hausdorff sense to $(\R^{2m},d_{\eucl})$:
	\begin{equation*}
		(X_i\setminus Z_i,\hat d_{g_i},p_i) \, \overset{pGH}{\longrightarrow} \, (\R^{2m}, d_{\eucl},0)
	\end{equation*}
	where $p_i\in M_i\setminus Z_i$ is any choice of base point and $\hat d_{g_i}$ is the induced length metric of $g_i$ on $M_i\setminus Z_i$. Furthermore, for any continuous $\xi\colon (0,\infty) \to (0,\infty)$ such that $\lim_{x\to 0^+} \xi(x) = 0$, we have for $i$ large enough:
	\begin{equation*}
		\area(\partial Z_i) \leq \frac{\m(g_i)^{\frac{m}{2m-2}+\frac{1}{2}}}{\xi(\m(g_i))}.
	\end{equation*}
\end{mtheorem}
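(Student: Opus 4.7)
The plan is to use Theorem \ref{MTheoremMassBound} as the engine to control the Hessians of the real and imaginary parts of all $m$ asymptotic holomorphic coordinate functions provided by Theorem \ref{BiholomorphismAsympt}, then to deduce that the global map $\Phi_i \coloneqq (x_{i,1},\ldots,x_{i,2m})\colon X_i\to\R^{2m}$ is close to a Riemannian isometry off a carefully chosen small bad set $Z_i$.

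First I would apply Theorem \ref{MTheoremMassBound} to each of the $m$ asymptotic holomorphic coordinates $z^{(k)}_i = x^{(k)}_{i,1} + i x^{(k)}_{i,2}$ and sum, obtaining a global $L^2$ Hessian bound
\begin{equation*}
	\sum_{j=1}^{2m}\int_{X_i}\morm{\nabla^2 x_{i,j}}^2\dvol_{g_i}\,\leq\, C_m\,\m(g_i).
\end{equation*}
Since the AE hypothesis forces $\Phi_i$ to be asymptotic to the identity on each end, integrating this Hessian bound along long geodesic rays and applying a weighted Poincaré inequality should yield a global integral estimate of the form $\Norm{\Phi_i^{*}g_{\eucl}-g_i}{L^2(X_i)}^2 \lesssim \m(g_i)$, together with pointwise decay of $\morm{\nabla\Phi_i-\id}$ near infinity.

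The next step, which I expect to be the main obstacle, is constructing $Z_i$ so that its boundary simultaneously has the sharp area bound $\m(g_i)^{m/(2m-2)+1/2}/\xi(\m(g_i))$ and so that its complement has induced length metric close to Euclidean. A naive thresholding $\{\morm{\nabla\Phi_i-\id}>\varepsilon_i\}$ has small volume by Markov but no controllable perimeter. Instead, I would apply the coarea formula to a scalar quantity such as $F_i \coloneqq \sum_j\morm{\nabla x_{i,j}}^2 - 2m$, extracting an intermediate level $t_i$ whose preimage has area $\leq \int_{X_i}\morm{dF_i}\,\dvol_{g_i}/\Delta t_i$; by Cauchy--Schwarz combined with the AE volume growth $\vol(B_r)\lesssim r^{2m}$, the numerator is controlled by an appropriate power of $\m(g_i)$ coming from the Hessian bound. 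A two-scale optimisation of $t_i$ against the resulting volume/area trade-off should yield precisely the exponent $m/(2m-2)+1/2$. Setting $Z_i$ to be the component of $\{F_i>t_i\}$ not containing the end, the fact that level sets of the holomorphic functions $z^{(k)}_i$ are minimal submanifolds in the Kähler case—already exploited in the proof of Theorem \ref{MTheoremMassBound}—then provides the needed control of $\hat d_{g_i}$ near $\partial Z_i$.

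Finally, on $X_i\setminus Z_i$ the map $\Phi_i$ is a submersion whose differential is uniformly close to a Euclidean isometry, so it is a quasi-isometric embedding into $\R^{2m}$ with bi-Lipschitz constant tending to $1$ as $\m(g_i)\to 0$. A standard comparison, using that $\hat d_{g_i}$ agrees with chord-distance up to small error near $\partial Z_i$ thanks to minimality of the bounding level set, then shows $(X_i\setminus Z_i,\hat d_{g_i},p_i)\to(\R^{2m},d_{\eucl},0)$ in the pointed Gromov--Hausdorff sense, completing the proof.
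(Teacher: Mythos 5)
Your overall strategy---use Theorem \ref{MTheoremMassBound} to get a global $L^2$ Hessian bound for all $2m$ coordinate functions, apply the coarea formula to a scalar defect function to extract a level set of controlled area, excise, and then compare the complement to $\R^{2m}$---is the same skeleton as the paper's proof, which follows Dong--Song. Two points in your write-up, however, are genuine gaps rather than routine details.

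First, a smaller one: your defect function $F_i=\sum_j\morm{\nabla x_{i,j}}^2-2m$ only controls the diagonal of the Gram matrix. To conclude that $\Phi_i$ is close to a Riemannian isometry off the bad set you also need the off-diagonal inner products $g(\nabla x_{i,j},\nabla x_{i,k})$ to be small; the paper's choice $F=\sum_{j,k}(g(\nabla x_j,\nabla x_k)-\delta_{jk})^2$ does this, and $\morm{\nabla F}$ is still pointwise bounded by $\sum_j\morm{\nabla^2 x_j}$ on the sublevel set, so the same coarea/isoperimetric argument gives the area bound. This is fixable but your version as stated does not yield the bi-Lipschitz claim.

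The serious gap is the last step. Smallness of $\morm{\nabla\Phi_i-\id}$ on $X_i\setminus Z_i$ gives only the \emph{upper} bound $d_{\eucl}(\Phi_i(x),\Phi_i(y))\leq(1+\epsilon)\hat d_{g_i}(x,y)$, since it controls lengths of curves that stay in the complement. The reverse inequality---that any two points of the complement can be joined by a path in the complement of length close to their Euclidean distance---is not a ``standard comparison,'' and small perimeter of $\partial Z_i$ does not by itself prevent the excised region from forcing long detours (a thin set of tiny area can still separate or stretch the induced length metric). Moreover, your appeal to minimality does not apply: $\partial Z_i$ is a level set of the scalar function $F_i$, not of a holomorphic function, so it is not a minimal submanifold, and even minimality would not give path-connectivity of complements of cubes. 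This is precisely the content of Part 2 of the paper's proof and its main new contribution beyond dimension three: one decomposes $\R^{2m}$ into cubes of side $\delta_1$, uses the relative isoperimetric inequality and an iterated coarea/projection argument through $(2m-2)$ intermediate hyperplanes $A^d_{\k,\delta_1}(t_{\k,d})$ to produce, in each cube, a path-connected subset $C^{2m}_{\k}(\delta_1)'$ of almost full volume and diameter $\leq 5\delta_1$ in the induced metric, and then connects distinct cubes by straight segments avoiding the bad set. Without an argument of this type (or Dong's inductive variant), the pointed Gromov--Hausdorff convergence does not follow from your estimates.
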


For three-dimensional Riemannian manifolds, the theorem was proven by Dong and Song \cite{dongStability2023}. Some parts of their proof can be generalized from the three-dimensional case in a straightforward manner, while others require new work. Specifically, one of the essential ingredients in Dong and Song's proof was the integral inequality of Bray, Kazaras, Khuri, and Stern \cite[Theorem 1.2]{brayHarmonic2022}, and a major part of proving the theorem in the Kähler setting is demonstrating the inequality in Theorem \ref{MTheoremMassBound}. Another key point of Dong and Song's proof shows that, for any small three-dimensional cube around a level set of a certain function, there exists a two-dimensional subplane intersecting the level set nicely. This is necessary to bound distances between arbitrary points in the cube, which is essential for estimating the Gromov-Hausdorff distance. Using an iteration argument and Theorem \ref{MTheoremMassBound}, we extend this construction to find good real two-dimensional planes in $2m$-dimensional cubes.\\
	
	The second stability theorem does not require the excision of sets with vanishing boundaries, but it does require a lower bound on the Ricci curvature and uniform control at infinity:
	
	\begin{definition*}
		Given $b>0, \tau > m-1$, an AE Kähler manifold $(X^{2m},g,J)$ is $(b,\tau)$-asymptotically flat if in the given AE coordinate chart:
		\begin{equation*}
			|\del^k(g_{ij}-\delta_{ij})| \leq br^{-\tau-k}, \quad k=0,1,2.
		\end{equation*}
	\end{definition*}
	
	\begin{mtheorem}[Theorem \ref{CorollaryStabRicciBound}]\label{MCorollaryStabRicciBound}
		Fix $b>0$, $\tau>m-1 $, $\kappa>0$, and a point $p\in \R^{2m} \setminus \overline{B_1^{\eucl}(0)}$. Consider a pointed sequence $(X_i^{2m}, g_i, J_i,p_i)$ with $p_i \in X_i$ of $(b,\tau)$-asymptotically flat Kähler manifolds with fixed complex dimension, nonnegative and integrable scalar curvatures, and Ricci curvatures
		\begin{equation*}
			\Ric_{g_i} \geq -2\kappa g_i.
		\end{equation*}
		Furthermore, fix the AE charts $(\Phi_i)$ of $(X_i, g_i)$ such that $\Phi_i(p_i) = p$. If $\m(g_i)\to 0$, then $(X_i,d_{g_i},p_i)$ converges to $(\R^{2m},d_{\eucl}, p)$ in the pointed Gromov-Hausdorff sense.
	\end{mtheorem}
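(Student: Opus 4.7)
The plan is to deduce this theorem from Theorem \ref{MCorollaryStabNoRic} by showing that, under the extra Ricci lower bound and uniform $(b,\tau)$-asymptotic flatness, the excised domains $Z_i$ produced there are metrically negligible in $(X_i, d_{g_i})$ and hence can be absorbed without affecting the pointed Gromov-Hausdorff limit.

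First, I apply Theorem \ref{MCorollaryStabNoRic} with a gauge $\xi$ chosen so that $\area(\partial Z_i) \leq \m(g_i)^{m/(2m-2)+1/2}/\xi(\m(g_i)) \to 0$, obtaining domains $Z_i \subset X_i$ with $(X_i \setminus Z_i, \hat d_{g_i}, p_i) \overset{pGH}{\longrightarrow} (\R^{2m}, d_{\eucl}, p)$. Next, the uniform $(b,\tau)$-asymptotic flatness provides a radius $R_0 = R_0(b,\tau)$ independent of $i$ on which $g_i$ is uniformly $C^2$-close to the Euclidean metric on $\Phi_i^{-1}(\R^{2m} \setminus B_{R_0}^{\eucl}(0))$. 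Inspecting the construction of $Z_i$ from the proofs of Theorems \ref{MTheoremMassBound} and \ref{MCorollaryStabNoRic}---which is driven by regions where the holomorphic coordinate functions or their Hessians deviate substantially from Euclidean behavior---these defining conditions are vacuous in the almost-Euclidean region, so one can arrange $Z_i \subset \Phi_i^{-1}(B_{R_0}^{\eucl}(0))$ for all large $i$. The Ricci lower bound together with Bishop-Gromov comparison then yields uniform volume bounds on $g_i$-balls about $p_i$, hence pointed Gromov-Hausdorff precompactness of $(X_i, d_{g_i}, p_i)$; extract any subsequential limit $(Y, d_Y, p_Y)$.

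The $1$-Lipschitz inclusion $(X_i \setminus Z_i, \hat d_{g_i}) \hookrightarrow (X_i, d_{g_i})$ descends in the limit to a $1$-Lipschitz map $\iota: (\R^{2m}, d_{\eucl}) \to (Y, d_Y)$ sending $p$ to $p_Y$. To identify $Y$ with $\R^{2m}$ via $\iota$, it suffices to show that $\diam_{g_i}(Z_i) \to 0$: then every point of $X_i$ lies within $o(1)$ of $X_i \setminus Z_i$ (making $\iota$ surjective), and any geodesic shortcut through $Z_i$ between points of $X_i \setminus Z_i$ saves at most $o(1)$ length (making $\iota$ isometric). This metric vanishing of $Z_i$ would be obtained by combining a Bishop-Gromov tubular-neighborhood estimate of the form $\vol(T_\eta(\partial Z_i)) \leq \area(\partial Z_i) \cdot \eta \cdot e^{C\kappa \eta}$ with a uniform lower bound on the volume of small $g_i$-balls inside the confined region containing $Z_i$; the latter uses the uniform $C^2$-control at infinity from the $(b,\tau)$-asymptotic flatness and the Hessian control on the holomorphic coordinates from Theorem \ref{MTheoremMassBound} to rule out collapse.

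The main obstacle is precisely this final metric-smallness step. A Ricci lower bound alone does not preclude long thin fingers of small boundary area and large diameter (for instance, a long thin cylinder has vanishing Ricci yet unbounded diameter), so the argument must combine the uniform confinement of $Z_i$ to a bounded Euclidean region with quantitative collapse-avoidance inside that region. Adapting Dong and Song's three-dimensional harmonic-coordinate technique to the Kähler setting here amounts to substituting their harmonic control by the holomorphic-coordinate control from Theorem \ref{MTheoremMassBound} and to verifying that the covering and iteration used in Theorem \ref{MCorollaryStabNoRic} to produce good real two-dimensional planes also yield the required uniform volume lower bound on small balls inside $\Phi_i^{-1}(B_{R_0}^{\eucl}(0))$.
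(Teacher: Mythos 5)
Your proposal takes a genuinely different route from the paper, and it has a gap at exactly the point you yourself flag as ``the main obstacle''; that gap is not closed. The paper does not deduce Theorem \ref{MCorollaryStabRicciBound} from Theorem \ref{MCorollaryStabNoRic} at all. It instead follows the proof of Kazaras--Khuri--Lee directly: Theorem \ref{TheoremMassBound} already supplies the estimate $\int_X |\nabla^2 x_j|^2 \, \dvol_g \leq C\,\m(g)$ for the holomorphic coordinates of Theorem \ref{BiholomorphismAsympt} (so the Cheng--Yau gradient-estimate section of their paper, which converts $\int |\nabla^2 u|^2/|\nabla u|$ into $\int |\nabla^2 u|^2$, is simply unnecessary here), and the remaining machinery --- Bishop--Gromov comparison and the Cheeger--Colding Almost Splitting Theorem --- uses only the Ricci lower bound and is dimension-free. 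No excision argument appears in the paper's proof.

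Two concrete problems with your route. First, the confinement claim $Z_i \subset \Phi_i^{-1}(B^{\eucl}_{R_0}(0))$ with $R_0 = R_0(b,\tau)$ independent of $i$ is not justified: the excised set in Theorem \ref{MCorollaryStabNoRic} is built from a superlevel set $\{F > \tau_0\}$ with threshold $\tau_0 \leq 6\epsilon_i$, where $\epsilon_i = \xi_0(\m(g_i)) \to 0$, while $(b,\tau)$-asymptotic flatness only guarantees $F \leq C b^2 r^{-2\tau}$; the resulting confinement radius is of order $(b^2/\epsilon_i)^{1/(2\tau)}$, which diverges as $\m(g_i)\to 0$. Second, and more seriously, the step $\diam_{g_i}(Z_i) \to 0$ is stated as something that ``would be obtained'' but is never proved, and it does not follow from $\area(\partial Z_i)\to 0$ together with $\Ric_{g_i} \geq -2\kappa g_i$: as you note yourself, a long thin tube has small boundary area, satisfies a Ricci lower bound, and has large diameter, and nothing you invoke (tubular-neighborhood volume estimates, non-collapsing near infinity) excludes such a configuration inside the region containing $Z_i$. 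Since both the surjectivity and the isometry of your limit map $\iota$ rest on this step, the argument is incomplete as written. The efficient fix is the paper's: feed Theorem \ref{TheoremMassBound} directly into the Kazaras--Khuri--Lee almost-splitting scheme rather than trying to upgrade Theorem \ref{MCorollaryStabNoRic}.
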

	
	On three-dimensional real AE manifolds, a similar statement is given by Kazaras, Khuri, and Lee \cite[Theorem 1.2]{kazarasStability2021}. The main ingredients of their proof are the integral inequality \cite[Theorem 1.2]{brayHarmonic2022} and Cheeger-Colding's Almost Splitting Theorem \cite{cheegerLower1996}. We show that Theorem \ref{MTheoremMassBound} is sufficient to apply their proof in the Kähler case. Compared to the three-dimensional case, we strengthen the statement by removing the condition $\H^2(X^{2m},\Z) = 0$, which is unnecessary here as the coordinates $z_1,\dots,z_M$ and the integral in Theorem \ref{MTheoremMassBound} are global.\\
	
	In Section \ref{SectionExamples}, we present new sequences of AE Kähler metrics on $\C^2$ satisfying the requirements of Theorem \ref{MTheoremMassBound}. As the ADM masses vanish along the sequences, we obtain three explicit families of Kähler metrics, one with and two without Ricci curvature uniformly bounded from below, for which Theorems \ref{MCorollaryStabNoRic} or \ref{MCorollaryStabRicciBound} apply. The first two families arise as a smoothing of the Burns metric on $\C^2$, while the third is generated from a given volume form. All families have the property of global GH convergence to $(\C^2,g_{\eucl})$ in the limit, and we did not find families of Kähler metrics with nonnegative scalar curvature developing a gravitational well. However, the methods provided here should be able to produce more families of Kähler metrics than we explore. Finding a family that develops a gravitational well and has vanishing mass would be very interesting for the application of Theorem \ref{MCorollaryStabNoRic}.
	
	\addtocontents{toc}{\protect\setcounter{tocdepth}{0}}
	\subsection*{Acknowledgments}
	
	I am very grateful to my advisor Hans-Joachim Hein for his support. The project was funded by the Deutsche Forschungsgemeinschaft (DFG, German Research Foundation) – Project-ID 427320536 - SFB 1442, and under Germany's Excellence Strategy EXC 2044 390685587, Mathematics Münster: Dynamics–Geometry–Structure.
	
	\addtocontents{toc}{\protect\setcounter{tocdepth}{2}}

	\section{Asymptotically Euclidean Kähler Manifolds}\label{SectionComplexAE}
	
	This section provides the background on asymptotically Euclidean (AE) Kähler manifolds necessary for proving Theorem \ref{MTheoremMassBound}. For a thorough treatment of this topic, we refer to \cite{heinMass2016} and the author's PhD thesis (in progress).
	
	\subsection{AE Kähler manifolds as Resolutions of $\C^m$}
	
	Let $(X^{2m},g,J)$ be an AE Kähler manifold with nonnegative and integrable scalar curvature: $R_g\geq 0, R_g\in L^1(X)$. General AE Riemannian manifolds can have several ends, but the work of Hein-LeBrun \cite[Proposition 2.5]{heinMass2016} shows that $X^{2m}$ has only one end. For this end, and as $X$ is AE, there exists a smooth asymptotic coordinate chart $\Phi\colon \R^{2m}\setminus \overline{B_1(o)} \to X\setminus K$ for some compact set $K\subset X$ such that, in the associated AE coordinates $(\tilde x_1,\dots,\tilde x_{2m})$, $g$ has the asymptotics
	\begin{equation}\label{MetricAEExpansion}
		|\del^k(g_{ij}-\delta_{ij})| = \OO(r^{-\tau-k}), \quad \tau>m-1, \quad k=0,1,2.
	\end{equation}
	
	To apply the methods of \cite{brayHarmonic2022}, we construct nice coordinates similar to their linear growth harmonic functions. This is the content of the following theorem and is a slight improvement on the results in \cite{heinMass2016}:
	
	\begin{theorem}\label{BiholomorphismAsympt}
		Let $(X^{2m},g,J)$ be an AE Kähler manifold. Then there exists a biholomorphism $\Psi$ from $X$ onto a resolution of finitely many points of $\C^m$. Furthermore, the standard coordinates $z_1,\dots,z_m$ on $\C^m$ pulled back to the resolution space preserve the AE asymptotics of $g$:
		\begin{equation*}
			|\del^k((\Psi_* g)_{i\overline{j}}-\delta_{i\overline{j}})| = \OO(|z|^{-\tau-k}), \quad k=0,1,2,
		\end{equation*}
		for some $\tau>m-1$.
	\end{theorem}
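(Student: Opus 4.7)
The plan is to upgrade the real AE coordinates $\tilde x_{i}$ on the end of $X$ to global $J$-holomorphic coordinates $w_{1},\ldots,w_{m}$ whose asymptotic expansion with respect to the $\tilde x_{i}$ is quantitative; the map $(w_{1},\ldots,w_{m})$ will then supply the biholomorphism and the metric decay can be read off by a change of variables. The bare existence of a biholomorphism onto a resolution of $\C^{m}$ is already in \cite{heinMass2016}, so the new task is the quantitative asymptotic comparison.

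\emph{Step 1 (Nearly holomorphic initial coordinates).} Combining \eqref{MetricAEExpansion} with $\nabla J=0$ shows $J-J_{0}=\OO(r^{-\tau})$ in the AE chart with two further derivatives of decay. Setting $\tilde z_{j}:=\tilde x_{2j-1}+i\tilde x_{2j}$, this gives $\delbar_{J}\tilde z_{j}=\tfrac{1}{2}(J-J_{0})d\tilde z_{j}=\OO(r^{-\tau-1})$ with one more derivative of gain.

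\emph{Step 2 (Global holomorphic correction with quantitative decay).} I would solve $\delbar_{J}u_{j}=-\delbar_{J}\tilde z_{j}$ on $X$ after extending the right-hand side smoothly across the compact core $K$. Using the Kähler identity $2\Delta_{\delbar}=\Delta_{g}$, this reduces to a scalar Laplace problem on the AE manifold $(X,g)$, solvable in Bartnik's weighted Fredholm framework because the condition $\tau>m-1$ places the source term safely below the relevant critical weight. The elliptic inversion gains one power of $r$, producing $u_{j}=\OO(r^{1-\tau})$ with $\del u_{j}=\OO(r^{-\tau})$ and $\del^{2}u_{j}=\OO(r^{-\tau-1})$. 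Then $w_{j}:=\tilde z_{j}+u_{j}$ is a globally $J$-holomorphic function agreeing with $\tilde z_{j}$ to the indicated order.

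\emph{Step 3 (Biholomorphism to a resolution).} The Jacobian of $W:=(w_{1},\ldots,w_{m})$ in the AE chart is $I+\OO(r^{-\tau})$, so $W$ is a biholomorphism of the end of $X$ onto $\C^{m}\setminus\overline{B_{R}(0)}$ for some large $R$. Coupling this asymptotic biholomorphism with Hein-LeBrun's arguments (Remmert reduction together with Hartogs-type extension across the compact core) promotes $W$ to the desired biholomorphism $\Psi$ of $X$ onto a resolution of finitely many points of $\C^{m}$, with $\Psi^{*}z_{j}=w_{j}$ by construction.

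\emph{Step 4 (Transferring the metric decay).} By Step 2, the coordinate change $\Phi^{-1}\circ\Psi^{-1}$ on the end of the resolution differs from the identity by $\OO(|z|^{1-\tau})$ in $C^{2}$. Combining this with the real expansion $g_{ij}-\delta_{ij}=\OO(r^{-\tau})$ in the original AE chart and then converting to Hermitian components yields $(\Psi_{*}g)_{i\overline{j}}-\delta_{i\overline{j}}=\OO(|z|^{-\tau})$ with two derivatives, as claimed.

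\emph{Main obstacle.} The technical heart is the weighted $\delbar$-solvability in Step 2 with a genuine gain of one decay order. The Kähler identity converts this to a scalar Laplace problem, and the delicate point is then choosing a weight inside the Fredholm range of the AE Laplacian (avoiding the exceptional weights where the cokernel jumps); the hypothesis $\tau>m-1$ is exactly what is needed to keep $\delbar_{J}\tilde z_{j}$ in a non-exceptional weight class. Once this is in place, Steps 3 and 4 follow by essentially routine complex-analytic and change-of-variables manipulations, augmented by the topological extension statements from Hein-LeBrun.
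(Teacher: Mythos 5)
Your proposal follows essentially the same route as the paper, which itself gives only a two-sentence sketch deferring to Hein--LeBrun: the biholomorphism onto a resolution is quoted from \cite{heinMass2016}, and the quantitative content is the construction of global holomorphic functions asymptotic to the AE coordinates by correcting $\tilde z_j$ through a weighted elliptic problem. Your outline of that construction is correct, and you correctly identify the weighted solvability with a gain of one decay order as the technical heart. Two small points deserve care. First, since $d\tilde z_j$ has constant (order-one) coefficients, $\delbar_J\tilde z_j=\tfrac{i}{2}(J-J_0)^{*}d\tilde z_j$ is only $\OO(r^{-\tau})$, not $\OO(r^{-\tau-1})$ as you wrote; your final rate $u_j=\OO(r^{1-\tau})$ is nevertheless the right one, consistent with a source of order $\OO(r^{-\tau})$ and a gain of one order, so this is an internal inconsistency rather than an error in the conclusion (note also that $J_\infty$ need not equal $J_0$ in the given chart, so a preliminary linear rotation of the AE coordinates is needed). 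Second, the equation $\delbar_J u_j=-\delbar_J\tilde z_j$ is overdetermined and does not literally "reduce to a scalar Laplace problem" via $2\Delta_{\delbar}=\Delta_g$; the standard repair---and the one implicit in Hein--LeBrun---is either to solve the scalar equation $\Delta_g u_j=-\Delta_g\tilde z_j$ (whose right-hand side is $\OO(r^{-\tau-1})$, a genuinely scalar problem at a non-exceptional weight) and then show that the resulting harmonic function $w_j=\tilde z_j+u_j$ is holomorphic because $\delbar w_j$ is a decaying $\Delta_{\delbar}$-harmonic $(0,1)$-form that must vanish, or to work Hodge-theoretically on $(0,1)$-forms; either way an extra argument is needed where you currently have one sentence. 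With those repairs your Steps 3 and 4 go through as described, matching what the paper asserts.
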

	
	The theorem is proved as follows: by \cite{heinMass2016}, any AE Kähler manifold is biholomorphic to a resolution of finitely many points on $\C^m$. Furthermore, picking an AE Kähler manifold $(X,g)$ biholomorphic to some resolution space $\widehat{\C^m}$ of $\C^m$, one can pick the biholomorphism $\Psi \colon X \to \widehat{\C^m}$ such that $\Psi_*g$ preserves the asymptotics \eqref{MetricAEExpansion} in the natural coordinates $(z_1,\dots,z_m)$ on $\C^m$ pulled back to $\widehat{\C^m}$. The real and imaginary components of $(z_1,\dots,z_m)$ (up to reordering) are asymptotic to the AE coordinates $(\tilde x_1,\dots,\tilde x_{2m})$ from \eqref{MetricAEExpansion}.

	\subsection{The ADM Mass of Kähler Manifolds}

	Given an AE Kähler manifold $(X^{2m},g,J)$ with integrable scalar curvature, $(X,g)$ has a well-defined ADM mass. Furthermore, the Kähler condition simplifies the expression of the mass to an integral of the scalar curvature plus a topological term. Denote by $\clubsuit \colon \H_{\text{dR}}^2(X)\to \H^2_c(X)$ the inverse of the map $\H^2_c(X) \to \H_{\text{dR}}^2(X)$ induced by the inclusion of compact differential forms into all differential forms. We have the following result regarding the ADM mass of an AE Kähler manifold:
	
	\begin{theorem}[{\cite[Theorem C]{heinMass2016}}]\label{MassTheorem}
		If $(X^{2m},g,J)$ is an AE Kähler manifold with scalar curvature $R_g\in L^1(X)$, the ADM mass of $X$ equals
		\begin{equation}\label{MassFormula}
			\m(g) = -\frac{\gen{\clubsuit(c_1(X)),[\omega^{m-1}]}}{(2m-1)\pi^{m-1}} + \frac{(m-1)!}{4(2m-1)\pi^m} \int_X R_g \dvol_g,
		\end{equation}
		where $c_1(X)$ is the first Chern class of $X$, $[\omega] \in \H_{\text{dR}}^2(X)$ is the Kähler class of $g$, and $\gen{\cdot,\cdot}$ is the duality pairing of $\H^2_c(X)$ and $\H_{\text{dR}}^{2m-2}(X)$.
	\end{theorem}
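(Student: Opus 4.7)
My plan is to express the ADM boundary integral as a $d^c$-exact form in the holomorphic coordinates provided by Theorem \ref{BiholomorphismAsympt}, apply Stokes' theorem on the complement of a shrinking neighborhood of the exceptional set $E$ of the resolution $\Psi\colon X\to\widehat{\C^m}$, and identify the two resulting pieces as the scalar curvature integral (via the standard Kähler identity $\rho_g\wedge\omega^{m-1}=\frac{R_g}{2m}\omega^m$) and the topological Chern class pairing (via the local behavior of $\log\det g_{i\bar j}$ near $E$).

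First I would work in the complex chart at infinity, where the AE expansion $|\del^k(g_{i\bar j}-\delta_{i\bar j})|=O(|z|^{-\tau-k})$ with $\tau>m-1$ allows a direct rewriting of the real ADM integrand. The Kähler condition collapses the symmetric parts $g_{kl,k}$ and $g_{kk,l}$ appearing in $(g_{kl,k}-g_{kk,l})\nu_l$ into a single expression involving the Ricci potential $\log\det(g_{i\bar j})$. After collecting constants, this would give
\begin{equation*}
\m(g) = -\frac{1}{(2m-1)\pi^m}\lim_{R\to\infty}\int_{S_R} d^c\log\det(g_{i\bar j})\wedge\omega_0^{m-1} + o(1),
\end{equation*}
where the $o(1)$ gathers terms quadratic in $g-\delta$ that vanish on $S_R$ as $R\to\infty$ precisely because $\tau>m-1$. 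Replacing $\omega_0^{m-1}$ by $\omega^{m-1}$ introduces only further negligible contributions.

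Next, I would fix a family of smooth tubular neighborhoods $N_\epsilon\supset E$ and set $\Omega_{R,\epsilon}=\Psi^{-1}(B_R)\setminus N_\epsilon$. Stokes' theorem yields
\begin{equation*}
\int_{S_R} d^c\log\det(g_{i\bar j})\wedge\omega^{m-1} = \int_{\partial N_\epsilon} d^c\log\det(g_{i\bar j})\wedge\omega^{m-1} + \int_{\Omega_{R,\epsilon}} dd^c\log\det(g_{i\bar j})\wedge\omega^{m-1}.
\end{equation*}
Using $dd^c\log\det(g_{i\bar j})=-\rho_g$ and $\rho_g\wedge\omega^{m-1}=\frac{R_g}{2m}\omega^m$ together with $\omega^m=m!\,\dvol_g$, the interior integral converges in the joint limit to a constant multiple of $-\int_X R_g\,\dvol_g$, producing the scalar curvature term of \eqref{MassFormula}. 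For the remaining boundary term near $E$, since $-\log\det(g_{i\bar j})$ is locally the potential of a Hermitian metric on $K_X$, the global current $dd^c\log\det(g_{i\bar j})$ represents $-2\pi c_1(X)$; hence the $\epsilon\to 0$ limit of $\int_{\partial N_\epsilon}d^c\log\det(g_{i\bar j})\wedge\omega^{m-1}$ computes $2\pi\gen{\clubsuit(c_1(X)),[\omega^{m-1}]}$. This is precisely the role of the map $\clubsuit$: it provides a compactly supported representative whose pairing with $[\omega^{m-1}]$ captures the residue concentrated on $E$.

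The hard part will be the first step. The decay rate $\tau>m-1$ is borderline for the convergence of the ADM integral, so recasting the real integrand as a $d^c$-exact form modulo terms vanishing in the limit requires expanding both $\omega$ and $\log\det(g_{i\bar j})$ to exactly the right order and using rotational symmetry of $S_R$ to kill unwanted cross terms. Once this algebraic identification is established, the remaining steps amount to a Lelong--Poincaré-style localization together with standard Kähler trace identities, both of which are routine.
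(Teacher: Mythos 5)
The paper does not actually prove this statement: it is quoted verbatim as Theorem C of Hein--LeBrun \cite{heinMass2016}, so there is no internal proof to compare against. Your sketch is a faithful outline of the actual Hein--LeBrun argument --- rewriting the ADM integrand as $d^c\log\det(g_{i\bar j})\wedge\omega^{m-1}$ modulo exact forms and errors of size $O(R^{2m-2-2\tau})$, then splitting via Stokes into the bulk Ricci-form term (which yields $\int_X R_g\,\dvol_g$ through $\rho_g\wedge\omega^{m-1}=\tfrac{R_g}{2m}\omega^m$) and the Lelong--Poincar\'e residue along the exceptional divisor of the resolution (which yields the $\clubsuit(c_1(X))$ pairing). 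The one imprecision is that the unwanted terms on $S_R$ are killed because they assemble into exact forms on the closed sphere, not by rotational symmetry; this does not affect the structure of the argument.
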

	
	We stress that the pairing
	\begin{equation}\label{MassFirstTerm}
		- \frac{\gen{\clubsuit(c_1(X)),[\omega^{m-1}]}}{(2m-1)\pi^{m-1}} \geq 0
	\end{equation}
	is nonnegative for AE Kähler manifolds. This is important for inequalities appearing in the proof of Lemma \ref{MassScalarCurvIneqRightHand}. Let us briefly recall the reason for \eqref{MassFirstTerm}: take $\Psi \colon X \to \C^m$ defined via the coordinates $(z_1,\dots,z_m)$ and define the holomorphic form $\Omega = dz_1\wedge \cdots \wedge dz_m$. Because $\Omega\in\H^0(X, K_X)$ is a section of the canonical bundle $K_X$, the zero section $E = \Omega^{-1}(0)$ is the generating divisor of $K_X$ and hence the Poincare dual of $\clubsuit (C_1(K_X)) = - \clubsuit (C_1(X,J))$. \eqref{MassFirstTerm} is thus the evaluation of the area form on $E$, which is nonnegative. Furthermore, if $\gen{\clubsuit c_1(X),[\omega]^{m-1}} = 0$, then $E$ is empty and $\Psi$ is a global biholomorphism.

	\section{Mass Inequality via Level Sets and The Positive Mass Theorem}\label{SectionMassIneq}
	
	\subsection{Scalar Curvature and Level Sets}

	This section derives a lower bound on the ADM mass in terms of the holomorphic coordinates on AE Kähler manifolds from Theorem \ref{BiholomorphismAsympt} and the scalar curvature. Part of the proof is inspired by \cite{brayHarmonic2022}, and we use the results from Section \ref{SectionComplexAE} to modify the arguments from three-dimensional Riemannian manifolds to AE Kähler manifolds by instead employing the complex coarea formula. A new complication, giving rise to additional curvature terms, appears in the Kähler case and is dealt with in Lemma \ref{LemmaCurvatureRestTerm}.\\
	
	Let $(z_1,\dots,z_m)$ be the coordinates coming from Theorem \ref{BiholomorphismAsympt}. We only consider $z_1$ in the following computations, but the results apply to all $z_i$. Write the real and imaginary components of $(z_1,\dots,z_m)$ as
	\begin{equation*}
		z_1 = x_1+ix_2,\dots, z_m = x_{2m-1}+ix_{2m}.
	\end{equation*}
	
	We need the complex version of the coarea formula. For any complex-valued function $u$, $\Re u$ and $\Im u$ denote the real and imaginary components of $u$, respectively.
	
	\begin{prop}
		Let $f\in L^1(X,g)$ and $u\colon X \to \C$ be holomorphic. Then
		\begin{equation}\label{CoareaComplex}
			\int_{X} f\; |\nabla \Re u|^2 \dvol_g = \int_{\C} \int_{u^{-1}(s)} f(s,z_2,\dots,z_m) \dA_g ds_1 ds_2,
		\end{equation}
		where $\dA_g$ is the area form on $u^{-1}(s)$ and $s = s_1+is_2$ is the complex coordinate on $\C$.
	\end{prop}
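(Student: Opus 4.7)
The strategy is to reduce the complex coarea formula to the classical real coarea formula applied to the smooth map $u = (\Re u, \Im u) \colon X^{2m} \to \R^2$, and then compute the relevant $2$-dimensional Jacobian explicitly using the Kähler/holomorphic structure.

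First, I would record the pointwise identity that makes the $|\nabla \Re u|^2$ factor appear naturally. Since $u$ is holomorphic and $g$ is Kähler, the Cauchy--Riemann equations give $\nabla \Im u = J \nabla \Re u$, so $\nabla \Re u$ and $\nabla \Im u$ are pointwise orthogonal and satisfy $|\nabla \Re u| = |\nabla \Im u|$. At any regular point $p$ of $u$ I would pick an orthonormal frame $e_1, \dots, e_{2m}$ of $T_p X$ with $e_1 = \nabla \Re u / |\nabla \Re u|$, $e_2 = \nabla \Im u / |\nabla \Im u|$, and $e_3, \dots, e_{2m}$ an orthonormal basis of $\ker du_p = T_p u^{-1}(u(p))$. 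In this frame the matrix $du_p$ has the block form $\mathrm{diag}(|\nabla \Re u|, |\nabla \Im u|, 0, \dots, 0)$, so the $2$-Jacobian is
\begin{equation*}
    J_2 u \;=\; \sqrt{\det\bigl(du_p\, du_p^{\mathsf T}\bigr)} \;=\; |\nabla \Re u|\,|\nabla \Im u| \;=\; |\nabla \Re u|^2.
\end{equation*}

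Second, I would invoke the standard real coarea formula for Lipschitz maps into $\R^2$ (see e.g.\ Federer), which states that for any $f \in L^1(X, g)$,
\begin{equation*}
    \int_X f \cdot J_2 u \,\dvol_g \;=\; \int_{\R^2} \int_{u^{-1}(s_1, s_2)} f \,\dA_g \, ds_1\,ds_2.
\end{equation*}
Plugging in $J_2 u = |\nabla \Re u|^2$ and identifying $(s_1, s_2)$ with $s = s_1 + i s_2 \in \C$ yields exactly \eqref{CoareaComplex}. The notation $f(s, z_2, \dots, z_m)$ in the statement is interpreted as $f$ restricted to the level set $u^{-1}(s) = \{z_1 = s\}$, on which $(z_2, \dots, z_m)$ (after possibly reordering coordinates) give local coordinates away from critical points of $u$.

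The only mild technicality is handling the critical set of $u$. Since $u$ is holomorphic, its critical set is a proper complex-analytic subvariety of $X$ and therefore has real Hausdorff codimension at least two; by Sard's theorem the set of critical values in $\C$ has Lebesgue measure zero, so it contributes nothing to either side. Consequently the coarea identity extends from the regular locus to all of $X$, completing the proof. I do not anticipate a serious obstacle here: the main content is the linear-algebra computation of $J_2 u$, which is forced by the Kähler identity $\nabla \Im u = J \nabla \Re u$.
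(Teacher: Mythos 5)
Your proof is correct and follows essentially the same route as the paper: the key point in both is that the Kähler condition plus the Cauchy--Riemann equations give $\nabla\Im u = J\nabla\Re u$, hence $|\nabla\Re u| = |\nabla\Im u|$ and orthogonality, so the relevant Jacobian factor is $|\nabla\Re u|^2$. The paper phrases this as applying the real (codimension-one) coarea formula twice rather than the codimension-two coarea formula once with an explicit $2$-Jacobian, but the content is identical.
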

		\begin{proof}
			Since $|\nabla \Re u|= |\nabla \Im u|$ and $\nabla \Re u \perp \nabla \Im u$ because $g$ is Kähler and by the Cauchy-Riemann equations, the above formula is obtained by applying the real coarea formula twice.
		\end{proof}
		
		From now on, let $s = s_1+is_2\in \C$ with level set $\Sigma_s \coloneqq \{z_1 = s\}$. If $s$ is fixed, we write $\Sigma$. As we later integrate over all $s\in \C$, Sard's Theorem implies that, without loss of generality, we can assume that $s$ is a regular value of $x_1$ and $x_2$. The first step is to obtain an estimate involving the scalar curvature on $X$ and the intrinsic scalar curvature on $\Sigma$, denoted by $R_{g}$ and $R_{\Sigma}$ respectively, as well as the Ricci curvature on $X$ and the second fundamental form $\sff$.
		
		\begin{lemma}\label{LemmaGauss}
			Given the AE Kähler manifold $(X,g)$ and $z_1$ as above with regular value $s\in \C$, then
			\begin{align*}
				\frac{1}{2}|\nabla x_1|^2 (R_g - R_\Sigma - |\sff|^2 + 2\gen{\Rm_{X}(\hat x_1,\hat x_2)\hat x_2,\hat x_1}) &= \Ric(\nabla x_1, \nabla x_1) + \Ric(\nabla x_2, \nabla x_2)
			\end{align*}
		\end{lemma}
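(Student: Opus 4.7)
The plan is to recognize the claimed identity as a traced Gauss equation for the codimension-two level set $\Sigma = \Sigma_s \subset X$, with the Kähler hypothesis entering through the minimality of $\Sigma$.

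First, I would set up the normal frame. Since $z_1 = x_1 + ix_2$ is holomorphic and $g$ is Kähler, the Cauchy--Riemann equations yield $\nabla x_2 = \pm J\nabla x_1$, so $|\nabla x_1| = |\nabla x_2|$ and $\nabla x_1 \perp \nabla x_2$. Therefore $\{\hat x_1, \hat x_2\}$ is an orthonormal basis for the normal bundle $N\Sigma$. More importantly, $T\Sigma = \mathrm{span}(\nabla x_1,\nabla x_2)^{\perp}$ is $J$-invariant, so $\Sigma$ is a complex submanifold of $X$. Complex submanifolds of Kähler manifolds are minimal, so the scalar second fundamental forms $h^\alpha_{ij} := \langle \sff(e_i,e_j), \hat x_\alpha\rangle$ are traceless in any orthonormal tangent frame $(e_i)_{i=1}^{2m-2}$ of $\Sigma$; that is, $\sum_i h^\alpha_{ii} = 0$ for $\alpha = 1,2$.

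Next, I would apply the codimension-two Gauss equation
\[
R^\Sigma_{ijji} = R^X_{ijji} + \sum_{\alpha=1,2}\bigl(h^\alpha_{ii}h^\alpha_{jj} - (h^\alpha_{ij})^2\bigr),
\]
trace over $i,j$, and use minimality to kill the mean-curvature squared contributions. This yields
\[
R_\Sigma = \sum_{i,j=1}^{2m-2} R^X_{ijji} \; - \; |\sff|^2.
\]

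Finally, I would relate the tangential trace of the ambient curvature to $R_g$. Expanding $R_g$ in the full orthonormal basis $\{e_1,\dots,e_{2m-2},\hat x_1,\hat x_2\}$ and doing the same for $\Ric(\hat x_\alpha,\hat x_\alpha)$, the standard curvature symmetries give
\[
\sum_{i,j=1}^{2m-2} R^X_{ijji} = R_g - 2\bigl[\Ric(\hat x_1,\hat x_1)+\Ric(\hat x_2,\hat x_2)\bigr] + 2\langle \Rm_X(\hat x_1,\hat x_2)\hat x_2,\hat x_1\rangle,
\]
where the extra sectional-curvature term corrects for the normal--normal plane being counted once by each Ricci trace but only once in $R_g$. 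Combining this with the traced Gauss equation, solving for $\Ric(\hat x_1,\hat x_1)+\Ric(\hat x_2,\hat x_2)$, and multiplying through by $\tfrac12|\nabla x_1|^2 = \tfrac12|\nabla x_2|^2$ produces exactly the stated identity. The only non-routine ingredient is the minimality of $\Sigma$, which is where the Kähler structure and the holomorphicity of $z_1$ enter; the rest is standard bookkeeping with the Gauss equation.
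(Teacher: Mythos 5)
Your proposal is correct and follows essentially the same route as the paper: trace the Gauss equation twice over a tangent frame of $\Sigma$, use that $\Sigma$ is a complex (hence minimal) submanifold to kill the mean-curvature term, account for the normal--normal plane via the extra sectional-curvature term, and multiply by $\tfrac12|\nabla x_1|^2=\tfrac12|\nabla x_2|^2$. Your version merely spells out the bookkeeping (the decomposition of $R_g$ and of the two Ricci traces over the split frame) that the paper compresses into a single displayed identity.
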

		
		\begin{proof}
			Define $\hat x_i = \nabla x_i/|\nabla x_i|$ for $i=1,2$ as the two unit normal vectors of $\Sigma\coloneqq \Sigma_s$. $\Sigma$ has a (real) normal bundle $\Span_\R\{\hat x_1,\hat x_2\}$ as $\Sigma$ is a level set of $z_1$. Recall the Gauss formula for $p\in \Sigma$ and $X,Y,Z,W \in T_p(\Sigma)$:
			\begin{equation*}
				\gen{\Rm_{X}(X,Y)Z,W} = \gen{\Rm_\Sigma(X,Y)Z,W} + \gen{s(X,Z),s(Y,W)} - \gen{s(Y,Z),s(X,W)}
			\end{equation*}
			with $\sff(X,Y) \coloneqq (\nabla_X Y)^\perp$ being the second fundamental form on $\Sigma$. Taking the trace twice over the coordinates in $\Sigma$ yields:
			\begin{equation}\label{RicciEq}
				\begin{aligned}
					R_g - 2\Ric(\hat x_1,\hat x_1) - 2\Ric(\hat x_2, \hat x_2) + 2\gen{\Rm_{X}(\hat x_1,\hat x_2)\hat x_2,\hat x_1}= R_\Sigma + |\sff|^2 - \morm{H}^2.
				\end{aligned}
			\end{equation}
			Here, $H$ is the mean curvature of $\Sigma$ and vanishes since $\Sigma$ is a complex submanifold of a Kähler manifold. Multiplying by $\frac{1}{2}|\nabla x_1|^2$ and rearranging finishes the proof.
		\end{proof}

	The next step is to integrate Lemma \ref{LemmaGauss} over a domain $\Omega \subset X$ and apply the complex coarea formula \eqref{CoareaComplex}.
	
	\begin{lemma}\label{Lemma2}
		Let $\Omega\subset M$ be a precompact domain with a connected boundary $\partial \Omega \subset M$. Then
		\begin{equation}\label{IntegralLemma2}
			\begin{aligned}
				&2\int_{\partial\Omega} \morm{\nabla x_1} \del_\nu \morm{\nabla x_1} \dA_g + \int_{s\in\C}\int_{\Sigma_s\cap \Omega} R_\Sigma \dA_g ds_1ds_2\\ =& \int_\Omega |\nabla^2 x_1|^2 +|\nabla^2 x_2|^2 + \morm{\nabla x_1}^2 R_g \dvol_g \\
				& + \int_{s\in\C}\int_{\Sigma_s\cap \Omega}- |\sff|^2 -\tr_\Sigma (\gen{\Rm_X(\hat x_1,\cdot)\cdot,\hat x_1} + \gen{\Rm_X(\hat x_2,\cdot)\cdot,\hat x_2}) \dA_g ds_1ds_2,
			\end{aligned}
		\end{equation}
		where $\dA_g$ is either the area form of $\partial \Omega$ or $\Sigma_s$.
	\end{lemma}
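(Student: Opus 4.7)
The plan is to combine the Bochner formula applied to the two harmonic functions $x_1,x_2$ with a $\Sigma$-traced reformulation of Lemma \ref{LemmaGauss}, and then convert the resulting volume integrals over $\Omega$ into fiber integrals via the complex coarea formula \eqref{CoareaComplex}. Since $z_1$ is holomorphic on the Kähler manifold $X$, both $x_1$ and $x_2$ are harmonic, and the Kähler condition $\nabla J = 0$ together with $\nabla x_2 = -J\nabla x_1$ forces $|\nabla x_1| = |\nabla x_2|$. Applying $\tfrac12\Delta|\nabla u|^2 = |\nabla^2 u|^2 + \Ric(\nabla u,\nabla u)$ to $x_1$ and $x_2$, summing, and using $|\nabla x_1|^2 = |\nabla x_2|^2$ on the LHS yields
\begin{equation*}
	\Delta |\nabla x_1|^2 \;=\; |\nabla^2 x_1|^2 + |\nabla^2 x_2|^2 + \Ric(\nabla x_1,\nabla x_1) + \Ric(\nabla x_2,\nabla x_2).
\end{equation*}

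The next step is to rewrite the sum of Ricci terms so that it matches the right-hand side of \eqref{IntegralLemma2}. In the ON frame $\{e_1,\dots,e_{2m-2},\hat x_1,\hat x_2\}$ adapted to $\Sigma$, the Ricci curvature decomposes as
\begin{equation*}
	\Ric(\hat x_i,\hat x_i) \;=\; \tr_\Sigma\gen{\Rm_X(\hat x_i,\cdot)\cdot,\hat x_i} + \gen{\Rm_X(\hat x_1,\hat x_2)\hat x_2,\hat x_1}\quad \text{for } i=1,2,
\end{equation*}
after invoking the symmetries of $\Rm_X$. Feeding this into Lemma \ref{LemmaGauss} and solving for the Ricci sum eliminates the sectional curvature $\gen{\Rm_X(\hat x_1,\hat x_2)\hat x_2,\hat x_1}$ in favor of the two $\Sigma$-traces, producing
\begin{equation*}
	\Ric(\nabla x_1,\nabla x_1) + \Ric(\nabla x_2,\nabla x_2) \;=\; |\nabla x_1|^2\Bigl[R_g - R_\Sigma - |\sff|^2 - \tr_\Sigma\gen{\Rm_X(\hat x_1,\cdot)\cdot,\hat x_1} - \tr_\Sigma\gen{\Rm_X(\hat x_2,\cdot)\cdot,\hat x_2}\Bigr].
\end{equation*}

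I would then substitute this into the Bochner identity, integrate over $\Omega$, and apply the divergence theorem on the LHS, which yields $2\int_{\partial\Omega}|\nabla x_1|\partial_\nu|\nabla x_1|\,dA_g$. Each of the three remaining bulk integrals on the RHS (those involving $R_\Sigma$, $|\sff|^2$, and the two $\Sigma$-traces) carries an explicit factor $|\nabla x_1|^2$, so \eqref{CoareaComplex} applied with $f$ equal to the relevant fiber quantity turns each into a double integral over $\C\ni s$ and over $\Sigma_s\cap\Omega$. Rearranging these terms produces \eqref{IntegralLemma2}.

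The main obstacle is the bookkeeping in the trace-rewriting step. If one applied Lemma \ref{LemmaGauss} directly, the bulk integrand would contain $\tfrac12 |\nabla x_1|^2 R_g$ rather than the $|\nabla x_1|^2 R_g$ appearing in \eqref{IntegralLemma2}; exchanging the sectional curvature for the two $\Sigma$-traces via the displayed decomposition of $\Ric$ is precisely what restores the full factor. A minor technicality is that $R_\Sigma$ and $|\sff|^2$ are only defined on regular level sets, but Sard's theorem (as the excerpt already notes) makes the critical values of $z_1$ a null subset of $\C$, which is enough to justify the coarea applications.
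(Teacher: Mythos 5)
Your proposal is correct and follows essentially the same route as the paper: Bochner's formula for the harmonic functions $x_1,x_2$, the decomposition of $\Ric(\hat x_i,\hat x_i)$ into the sectional curvature plus a $\Sigma$-trace combined with Lemma \ref{LemmaGauss}, the divergence theorem, and the complex coarea formula. Your observation about how the trace-rewriting restores the full factor $|\nabla x_1|^2 R_g$ (rather than $\tfrac12|\nabla x_1|^2 R_g$) is exactly the role played by \eqref{RicciSectionalRelation} in the paper's argument.
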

	
	\begin{proof}
		By definition of the Ricci curvature and as it is compatible with $J$, then
		\begin{equation}\label{RicciSectionalRelation}
			\begin{aligned}
				\Ric(\hat x_1,\hat x_1) = \Ric(\hat x_2,\hat x_2) &= \gen{\Rm_{X}(\hat x_1, \hat x_2)\hat x_2, \hat x_1} + \tr_\Sigma \gen{\Rm_X(\hat x_1,\cdot)\cdot,\hat x_1}\\
				&=\gen{\Rm_{X}(\hat x_1, \hat x_2)\hat x_2, \hat x_1} + \tr_\Sigma \gen{\Rm_X(\hat x_2,\cdot)\cdot,\hat x_2}.\\
			\end{aligned}
		\end{equation}
		
		Next, as $x_1$ and $x_2$ are the real and imaginary parts of a holomorphic function, they are harmonic for any Kähler metric. Thus, according to Bochner's formula:
		\begin{equation*}
			\frac{1}{2} \Delta \morm{\nabla x_i}^2 = |\nabla^2 x_i|^2 + \Ric(\nabla x_i, \nabla x_i), \qquad i=1,2,
		\end{equation*}
		so Lemma \ref{LemmaGauss} and \eqref{RicciSectionalRelation} implies
		\begin{align*}
			&\frac{1}{2} \Delta \morm{\nabla x_1}^2 + \frac{1}{2} \Delta \morm{\nabla x_2}^2\\
			=& \, |\nabla^2 x_1|^2 + |\nabla^2 x_2|^2 + \Ric(\nabla x_1, \nabla x_1) + \Ric(\nabla x_2, \nabla x_2)\\
			=&|\nabla^2 x_1|^2 + |\nabla^2 x_2|^2 \\
			&+ \morm{\nabla x_1}^2 \left(R_g - R_\Sigma - |\sff|^2 - \tr_\Sigma \gen{\Rm_X(\hat x_1,\cdot)\cdot,\hat x_1} - \tr_\Sigma \gen{\Rm_X(\hat x_2,\cdot)\cdot,\hat x_2}\right).
		\end{align*}
		Moving $\morm{\nabla x_1}^2 R_\Sigma$ to the other side and integrating over $\Omega$:
		\begin{align*}
			&\int_\Omega \frac{1}{2} \Delta \morm{\nabla x_1}^2 + \frac{1}{2} \Delta \morm{\nabla x_2}^2 + \morm{\nabla x_1}^2   R_\Sigma\dvol_g\\
			=& \int_\Omega |\nabla^2 x_1|^2 + |\nabla^2 x_2|^2 + \morm{\nabla x_1}^2 R_g\\
			&- \morm{\nabla x_1}^2 \left(|\sff|^2 +\tr_\Sigma \gen{\Rm_X(\hat x_1,\cdot)\cdot,\hat x_1} + \tr_\Sigma \gen{\Rm_X(\hat x_2,\cdot)\cdot,\hat x_2})\right) \dvol_g 
		\end{align*}
		On the left-hand side, we use the divergence theorem and identify $|\nabla x_1| = |\nabla x_2|$ to obtain
		\begin{equation*}
			\int_\Omega \frac{1}{2} \Delta \morm{\nabla x_1}^2 + \frac{1}{2} \Delta \morm{\nabla x_2}^2\dvol_g = 	2\int_{\partial\Omega} \morm{\nabla x_1} \del_\nu \morm{\nabla x_1} \dA_g,
		\end{equation*}
		where $d A_g$ is the area form of $g$. 
		
		Finally, use the complex coarea formula (Proposition \ref{CoareaComplex}) to obtain  the lemma:
		\begin{align*}
			&2\int_{\partial\Omega} \morm{\nabla x_1}\del_\nu \morm{\nabla x_1} \dA_g + \int_{s\in\C}\int_{\Sigma_s\cap \Omega} R_\Sigma \dA_g ds_1ds_2\\ =& \int_\Omega |\nabla^2 x_1|^2 +|\nabla^2 x_2|^2 + \morm{\nabla x_1}^2 R_g \dvol_g \\
			& + \int_{s\in\C}\int_{\Sigma_s\cap \Omega}- |\sff|^2 -\tr_\Sigma (\gen{\Rm_X(\hat x_1,\cdot)\cdot,\hat x_1} + \gen{\Rm_X(\hat x_2,\cdot)\cdot,\hat x_2}) \dA_g ds_1ds_2.
		\end{align*}
	\end{proof}

	\subsection{Obtaining the Mass from an Integral Inequality}\label{SectMassIneq}
	
To proceed with the proof of Theorem \ref{MTheoremMassBound}, we choose a nice domain $\Omega = \Omega_L$ that depends on $L$ and let $L\to\infty$ in Lemma \ref{Lemma2}. This will give us the ADM mass. The domain is a complex version of the domain chosen in \cite[Eq. (6.1)]{brayHarmonic2022}. We define $\Omega_L\subset M$ as follows:
\begin{equation*}
	\Omega_L \coloneqq \left\{|x_1|\leq L, |x_2|\leq L \right\} \times \left\{|(x_3,\dots,x_{2m})| \leq L \right\},
\end{equation*}
where $|\cdot|$ represents the natural Euclidean distance from $0$ with respect to the coordinate chart $(x_1,\dots,x_{2m})$.

In turn, the boundary of $\Omega_L$ is given by:
\begin{align*}
	\partial \Omega_L \eqqcolon C_L &= \left\{x_1 = \pm L, |x_2|\leq L \right\} \times \left\{|(x_3,\dots,x_{2m})| \leq L \right\}\\
	&\cup \left\{|x_1|\leq L, x_2= \pm L \right\} \times \left\{|(x_3,\dots,x_{2m})| \leq L \right\} \\
	&\cup\left\{|x_1|\leq L, |x_2|\leq L \right\} \times \left\{|(x_3,\dots,x_{2m})| = L \right\}.
\end{align*}
	\begin{equation}\label{BoundaryComp}
		\begin{aligned}
			D_{1,L}^\pm  &\coloneqq \left\{x_1 =  \pm L, |x_2|\leq L  \right\} \times \left\{|(x_3,\dots,x_{2m})| \leq L \right\},\quad \nu = \pm\del_1,\\
			D_{2,L}^\pm  &\coloneqq \left\{|x_1|\leq L, x_2= \pm L  \right\} \times \left\{|(x_3,\dots,x_{2m})| \leq L \right\},\quad \nu = \pm\del_2, \\
			T_L &\coloneqq \left\{|x_1|\leq L, |x_2|\leq L  \right\} \times \left\{|(x_3,\dots,x_{2m})| = L \right\}, \quad \nu =\frac{1}{L} \sum_{l=3}^{2m} x_l\del_l.
		\end{aligned}
	\end{equation}
	The boundaries above only overlap on a subset of measure zero. The $\nu$'s are the outer normals of the boundaries for $g$. As $g$ is AE, for a large enough $L$, the normals of $g$ and $g_{\eucl}$ coincide up to an error of order $\OO(|x|^{-\tau})$. The area form $\dA_g$ on $\partial \Omega_L$ can similarly be replaced by the Euclidean area form $\dA_{\eucl}$ up to an error of order $\OO(|x|^{-\tau})$. In the three-dimensional case, a similar situation appears in \cite{brayHarmonic2022}. However, as the level sets are now real codimension-two manifolds, additional terms appear (see Lemma \ref{LemmaCurvatureRestTerm}).
	
	\begin{lemma}\label{Lemma3} The first term of the left-hand side of \eqref{IntegralLemma2} is 
		\begin{equation*}
			\begin{aligned}
				&2\int_{C_L} \morm{\nabla x_1} \del_\nu \morm{\nabla x_1} \dA_g\\ =& \int_{D_{1,L}^+} \sum_{k=1}^{2m}\left( g_{1k,k}  - g_{kk,1}\right) \dA_{\eucl}  +\int_{D_{2,L}^+}  \sum_{k=1}^{2m}\left( g_{2k,k}  - g_{kk,2}\right) \dA_{\eucl} 
				\\ &+\int_{D_{1,L}^-}  \sum_{k=1}^{2m}\left(- g_{1k,k}  + g_{kk,1}\right) \dA_{\eucl}  + \int_{D_{2,L}^-} \sum_{k=1}^{2m}\left(- g_{2k,k}  + g_{kk,2}\right) \dA_{\eucl}\\
				&+\frac{1}{L} \int_{T_L} \sum_{i=1}^2  \sum_{k=3}^{2m}(g_{ik,i} -  \frac{1}{2} g_{ii,k}) x_k \dA_{\eucl} +  \OO(L^{-2\tau-2+2m}) .
			\end{aligned}
		\end{equation*}
	\end{lemma}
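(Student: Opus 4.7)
The plan is a direct asymptotic computation on each of the five smooth pieces of $C_L$. Writing $h_{ij} := g_{ij} - \delta_{ij}$, the AE hypothesis \eqref{MetricAEExpansion} gives $h = O(r^{-\tau})$, $\partial h = O(r^{-\tau-1})$, and $\partial^2 h = O(r^{-\tau-2})$.

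In the chart $(x_1, \dots, x_{2m})$ coming from Theorem \ref{BiholomorphismAsympt}, $x_1$ is the first coordinate, so $|\nabla x_1|^2 = g^{ij}\partial_i x_1\,\partial_j x_1 = g^{11}$, and
\begin{equation*}
2|\nabla x_1|\partial_\nu |\nabla x_1| \;=\; \partial_\nu g^{11}.
\end{equation*}
A Neumann-series expansion yields $g^{11} = 1 - h_{11} + O(|h|^2)$; the $g$-unit outer normal to each face agrees with the Euclidean outer normal up to $O(|h|)$, and $dA_g = (1 + O(|h|))\,dA_{\eucl}$. Substituting, the leading-order integrand on each face is $-\partial_{\nu_{\eucl}} h_{11}$, and all corrections to this replacement are of pointwise size $O(|h|\,|\partial h|) = O(r^{-2\tau-1})$. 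Integrated over a face of area $O(L^{2m-1})$, these total the allowed error $O(L^{-2\tau-2+2m})$.

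On $D_{i,L}^\pm$ with $\nu_{\eucl} = \pm\partial_i$ ($i = 1, 2$), the leading term is $\mp g_{11, i}$; on $T_L$ it is $-L^{-1}\sum_{l\geq 3} x_l\, g_{11, l}$. To convert these raw expressions into the ADM-like integrands in the statement, I use two identities specific to the holomorphic/Kähler setting. First, both $x_1$ and $x_2$ are $g$-harmonic as real and imaginary parts of the holomorphic coordinate $z_1$, and linearising $\Delta_g x_i = 0$ about the Euclidean metric gives
\begin{equation*}
\sum_k g_{ik,k} \;=\; \frac{1}{2}\sum_k g_{kk,i}, \qquad i = 1, 2,
\end{equation*}
at leading order. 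Second, $|\nabla x_1| = |\nabla x_2|$ (from the Cauchy--Riemann equations in the Kähler setting) forces $g^{11} = g^{22}$ and hence $g_{11} = g_{22}$ to leading order. Using these to rewrite $\mp g_{11,i}$ yields $\pm\sum_k(g_{ik,k} - g_{kk,i})$ on $D_{i,L}^\pm$, and after a symmetric grouping the $T_L$ integrand becomes $L^{-1}\sum_{i=1}^2\sum_{k\geq 3}x_k(g_{ik,i} - \frac{1}{2}g_{ii,k})$.

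The main obstacle is that the conversions above are only equalities modulo divergence terms, not pointwise; verifying that the summed discrepancies across all five faces cancel modulo $O(L^{-2\tau-2+2m})$ requires integrating by parts within each face and carefully tracking the $(2m-2)$-dimensional corner contributions where adjacent faces meet. Once these cancellations are established, summing the five leading-order contributions yields the stated formula.
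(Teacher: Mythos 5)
Your setup coincides with the paper's: both reduce $2\morm{\nabla x_1}\del_\nu\morm{\nabla x_1}$ to $-\del_{\nu_{\eucl}}g_{11}$ modulo pointwise errors of order $\OO(r^{-2\tau-1})$, which integrate to the allowed $\OO(L^{-2\tau-2+2m})$, and both then invoke harmonicity of $x_1,x_2$ together with $g_{11}=g_{22}$ from $J$-compatibility. The gap lies in the conversion step. Linearized harmonicity gives $\sum_k g_{1k,k}=\frac12\sum_k g_{kk,1}+\OO(r^{-2\tau-1})$, hence
\begin{equation*}
-g_{11,1}=\sum_{k=1}^{2m}\left(g_{1k,k}-g_{kk,1}\right)+\sum_{k=2}^{2m}g_{1k,k}+\OO(r^{-2\tau-1}),
\end{equation*}
so rewriting $\mp g_{11,i}$ on $D_{i,L}^\pm$ produces the ADM integrand \emph{plus} a leftover $\pm\sum_{k\geq 2}g_{ik,k}$ of the same order $\OO(r^{-\tau-1})$ as the main terms; it cannot be absorbed into the error. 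You acknowledge a discrepancy but propose to show that the summed discrepancies over the five faces ``cancel.'' They do not cancel: they are precisely the source of the $\frac{1}{L}\int_{T_L}\sum_{i}\sum_{k\geq 3}g_{ik,i}x_k\,\dA_{\eucl}$ term in the statement. The direct computation on $T_L$ yields only $-\frac{1}{L}\sum_{l\geq3}x_l\,g_{11,l}$, i.e.\ (after using $g_{11}=g_{22}$) the $-\frac12 g_{ii,k}$ half of the stated $T_L$ integrand, so the ``symmetric grouping'' by which you obtain $(g_{ik,i}-\frac12 g_{ii,k})$ on $T_L$ is unjustified as written, and an attempt to prove cancellation would fail.

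The missing mechanism, which is the substantive content of the paper's proof, is a two-step transport of the face leftovers onto $T_L$: write $\sum_{k\geq 2}g_{1k,k}=\Div_{\eucl}(0,g_{12},g_{13},\dots,g_{1(2m)})$ tangentially to $D_{1,L}^\pm$ and apply the divergence theorem on that face; the edge contributions along $\{|x_2|=L\}$ vanish because $g_{12}\equiv 0$ by the Kähler (Hermitian) condition --- a point absent from your proposal --- leaving edge integrals over $\{x_1=\pm L\}\times\{|(x_3,\dots,x_{2m})|=L\}$; a second application of the divergence theorem, this time on $T_L$ to the field $\bigl(\frac1L\sum_{k\geq3}g_{1k}x_k,\frac1L\sum_{k\geq3}g_{2k}x_k,0,\dots,0\bigr)$, identifies the sum of these four edge integrals with $\frac1L\int_{T_L}\sum_{i=1}^2\sum_{k\geq3}g_{ik,i}x_k\,\dA_{\eucl}$. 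Without this transport and the vanishing of $g_{12}$ to kill the spurious edge terms, the leftover face contributions are unaccounted for and the proof does not close.
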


	\begin{proof}
		First, we see that 
		\begin{equation*}
			\morm{\nabla x_1} = 1 + \OO(|x|^{-\tau}),
		\end{equation*}
		so we can ignore the first factor of $\morm{\nabla x_1}$ in $\morm{\nabla x_1} \del_\nu \morm{\nabla x_1}$ to highest order. Next,
		\begin{equation*}
			\nabla x_1 = g^{1l}\del_l, \qquad \nabla|\nabla x_1| = \nabla(g^{11})^{1/2} = -\frac{1}{2}\nabla g_{11} + \OO(|x|^{-2\tau-1}),
		\end{equation*}
		where the second equation is due to the decay rates. %, which in particular imply
		%\begin{equation*}
		%	\nabla g^{11} = \nabla (g_{11})^{-1} + \OO(|x|^{-2\tau-1}) = -\nabla g_{11} + \OO(|x|^{-2\tau-1}).
		%\end{equation*}
		Due to the normal vectors in \eqref{BoundaryComp} and $|\nabla x_1| = 1+ \OO(|x|^{-\tau})$, then
		
		\begin{equation}\label{BoundaryLaplace1}
			\begin{aligned}
				2\int_{C_L} \morm{\nabla x_1} \del_\nu \morm{\nabla x_1} \dA_g =& 2\int_{C_L} \del_\nu \morm{\nabla x_1}\dA_g + \OO(L^{-2\tau-2+2m})\\ =&
				-\int_{D_{1,L}^+}  g_{11,1} \dA_{\eucl}  -\int_{D_{2,L}^+}  g_{11,2} \dA_{\eucl} 
				\\ &
				+\int_{D_{1,L}^-}  g_{11,1} \dA_{\eucl}  + \int_{D_{2,L}^-} g_{11,2} \dA_{\eucl}\\
				&-\frac{1}{L} \int_{T_L} \sum_{k=3} g_{11,k} x_k \dA_{\eucl} +  \OO(L^{-2\tau-2+2m)}).
			\end{aligned}
		\end{equation}
		Let two repeated indices indicate summation over the coordinates. As $x_1$ is harmonic, then
		\begin{align*}
			0 %&= g^{ik}g(\nabla_i \nabla x_1,\del_k)\\
			%&=g^{ik}g^{1l}g(\nabla_i\del_l,\del_k) + g^{ik}g_{lk}\del_i g^{1l}\\
			%&=g^{ik}g^{1l}\Gamma_{il}^p g_{pk} +  + g^{ik}g_{lk}\del_i g^{1l}\\
			%&= \Gamma_{k1}^k + g_{lk} \del_k g^{1l} +\OO(|x|^{-2\tau-1})\\
			%&= \frac{1}{2}g^{kl}\left(g_{1l,k} + g_{kl,1} - g_{1k,l}  \right) + g_{lk} g^{1l}_{\;\;\;,k} %+\OO(|x|^{-2\tau-1})\\
			&= \sum_{k=1}^{2m}\left(\frac{1}{2} g_{kk,1} - g_{1k,k}\right) + \OO(|x|^{-2\tau-1}).
		\end{align*}
		%The last equality follows from
		%\begin{equation}\label{MetricDerivativeTrick}
		%	0 = \del_i \left((\text{Id})_{jl}  \right) = \del_i \left(g^{jk}g_{kl}  \right) = g^{jk}_{\;\;\;,i}g_{kl} + g^{jk}g_{kl,i},
		%\end{equation}
		%allowing us to exchange derivative between $g$ and $g^{-1}$ in exchange for a sign, and then finally the AE asymptotics. 
		Rearranging \eqref{BoundaryLaplace1} and doing the same computation for $g_{22}$, then
		\begin{align*}
			g_{11,1} = \sum_{k=2}^{2m}\left(-2 g_{1k,k}  + g_{kk,1}\right) + \OO(|x|^{-2\tau-1}), \\ g_{22,2} = \sum_{k=1, k\neq 2}^{2m}\left(-2 g_{2k,k}  + g_{kk,2}\right) + \OO(|x|^{-2\tau-1}).
		\end{align*}
		Writing $g_{11} = g_{22} = \frac{1}{2}(g_{11} + g_{22})$ ($g_{11} = g_{22}$ due to compatibility with $J$), \eqref{BoundaryLaplace1} becomes
		\begin{equation}\label{BoundaryLaplacian2}
			\begin{aligned}\sloppy
				&2\int_{C_L} \morm{\nabla x_1} \del_\nu \morm{\nabla x_1} \dA_g \\ %=& 
				%-\int_{D_{1,L}^+} \sum_{k=2}^{2m}\left(-2 g_{1k,k}  + g_{kk,1}\right) \dA_{\eucl}  -\int_{D_{2,L}^+}  %\sum_{k=1, k\neq 2}^{2m}\left(-2 g_{2k,k}  + g_{kk,2}\right) \dA_{\eucl} 
				%\\ &
				%+\int_{D_{1,L}^-}  \sum_{k=2}^{2m}\left(-2 g_{1k,k}  + g_{kk,1}\right) \dA_{\eucl}  + \int_{D_{2,L}^-} %\sum_{k=1, k\neq 2}^{2m}\left(-2 g_{2k,k}  + g_{kk,2}\right) \dA_{\eucl}\\
				%&-\frac{1}{L} \int_{T_L} \sum_{i=1}^2\sum_{k=3} \frac{1}{2} g_{ii,k}x_k \dA_{\eucl} +  %\OO(L^{-2\tau-2+2m}) \\
				=& \int_{D_{1,L}^+} \sum_{k=1}^{2m}\left( g_{1k,k}  - g_{kk,1}\right) \dA_{\eucl}  +\int_{D_{2,L}^+}  \sum_{k=1}^{2m}\left( g_{2k,k}  - g_{kk,2}\right) \dA_{\eucl} 
				\\ &
				+\int_{D_{1,L}^-}  \sum_{k=1}^{2m}\left(- g_{1k,k}  + g_{kk,1}\right) \dA_{\eucl}  + \int_{D_{2,L}^-} \sum_{k=1}^{2m}\left(- g_{2k,k}  + g_{kk,2}\right) \dA_{\eucl}\\
				&\frac{1}{L} \int_{T_L} \sum_{i=1}^2\sum_{k=3} -\frac{1}{2} g_{ii,k} x_k\dA_{\eucl} \\
				&+\int_{D_{1,L}^+} \sum_{k=2}^{2m} g_{1k,k} \dA_{\eucl}  +\int_{D_{2,L}^+}  \sum_{k=1, k\neq 2}^{2m} g_{2k,k}  \dA_{\eucl} 
				\\ &
				-\int_{D_{1,L}^-}  \sum_{k=2}^{2m} g_{1k,k}   \dA_{\eucl}  - \int_{D_{2,L}^-} \sum_{k=1, k\neq 2}^{2m} g_{2k,k} \dA_{\eucl} +  \OO(L^{-2\tau-2+2m}).
			\end{aligned}
		\end{equation}
		The two first lines after the last equality represent all the mass components on $D_{1,L}^{\pm}$ and $D_{2,L}^{\pm}$. We rewrite the two last lines as integrals on $T_L$ using the divergence theorem as follows: first, we express them as integrals on $\partial D_{1,L}^{\pm}$ and $\partial D_{2,L}^{\pm}$. Then, we apply the divergence theorem again to obtain integrals on $T_L$. More concretely, write
		\begin{align*}
			\sum_{k=2}^{2m} g_{1k,k} &= \Div_{\eucl} (0,g_{12},g_{13},\dots g_{1(2m)}),\\
			\sum_{k=1,k\neq 2}^{2m} g_{2k,k} &= \Div_{\eucl} (g_{21},0,g_{23},\dots g_{2(2m)}).
		\end{align*}
		The boundaries equal
		\begin{equation*}
			\begin{aligned}
				\partial D_{1,L}^\pm =& \left\{x_1 =  \pm L, |x_2|= L  \right\} \times \left\{|(x_3,\dots,x_{2m})| \leq L \right\}\\
				&\cup \left\{x_1 =  \pm L, |x_2|\leq L  \right\} \times \left\{|(x_3,\dots,x_{2m})| = L \right\}.\\
				\partial D_{2,L}^\pm =& \left\{|x_1| = L, x_2 =\pm L  \right\} \times \left\{|(x_3,\dots,x_{2m})| \leq L \right\}\\
				&\cup \left\{|x_1| \leq L, x_2 =\pm L  \right\} \times \left\{|(x_3,\dots,x_{2m})| = L \right\}.\\
			\end{aligned}
		\end{equation*}
		Using the divergence theorem to write $\int_{D_{1,L}^\pm}$ as a boundary integral, we do not get contributions from the components containing $\{|x_2|= L\}$ as $g_{12} =0$ by the Kähler condition. Same for $\int_{D_{2,L}^\pm}$ and $|x_1| = L$. Thus:
		\begin{equation}\label{TwoIntegrals}
			\begin{aligned}
				\int_{D_{1,L}^\pm}  \pm& \left(\sum_{k=3}^{2m} g_{1k,k} \right) \dA_{\eucl} \\&= \int_{\left\{x_1 =  \pm L, |x_2|\leq L  \right\} \times \left\{|(x_3,\dots,x_{2m})| = L \right\}}  \pm\frac{1}{L}\left(\sum_{k=3}^{2m} g_{1k}x_k \right) \dA_{\eucl},\\
				\int_{D_{2,L}^\pm}  \pm&\left(\sum_{k=3}^{2m} g_{2k,k} \right) \dA_{\eucl}\\ &= \int_{\left\{|x_1| \leq L, x_2 = \pm L  \right\} \times \left\{|(x_3,\dots,x_{2m})| = L \right\}}  \pm\frac{1}{L}\left(\sum_{k=3}^{2m} g_{2k}x_k \right) \dA_{\eucl}.
			\end{aligned}
		\end{equation}
		However, consider the integral:
		\begin{equation}\label{DivergenceCylinderPart}
			\begin{aligned}
				&\int_{T_L} \Div_{\eucl} \left( \left(\frac{1}{L}\sum_{k=3}^{2m} g_{1k}x_k\right) , \left(\frac{1}{L}\sum_{k=3}^{2m}g_{2k}x_k\right),0,\cdots,0 \right) \dA_{\eucl}\\ 
				=& \frac{1}{L}\int_{T_L} \left(\sum_{k=3}^{2m} g_{1k,1}x_k + \sum_{k=3}^{2m}g_{2k,2}x_k \right)  \dA_{\eucl}.
			\end{aligned}
		\end{equation}
		Using the divergence theorem on \eqref{DivergenceCylinderPart} we obtain a sum of the four integrals on the right-hand sides of \eqref{TwoIntegrals}. Inserting this into \eqref{BoundaryLaplacian2} shows that
		\begin{equation}\label{BoundaryLaplacian3}
			\begin{aligned}
				&2\int_{C_L} \morm{\nabla x_1} \del_\nu \morm{\nabla x_1} \dA_g\\ =& \int_{D_{1,L}^+} \sum_{k=1}^{2m}\left( g_{1k,k}  - g_{kk,1}\right) \dA_{\eucl}  +\int_{D_{2,L}^+}  \sum_{k=1}^{2m}\left( g_{2k,k}  - g_{kk,2}\right) \dA_{\eucl} 
				\\ &+\int_{D_{1,L}^-}  \sum_{k=1}^{2m}\left(- g_{1k,k}  + g_{kk,1}\right) \dA_{\eucl}  + \int_{D_{2,L}^-} \sum_{k=1}^{2m}\left(- g_{2k,k}  + g_{kk,2}\right) \dA_{\eucl}\\
				&+\frac{1}{L} \int_{T_L} \sum_{i=1}^2  \sum_{k=3}^{2m}(g_{ik,i} -  \frac{1}{2} g_{ii,k}) x_k \dA_{\eucl} +  \OO(L^{-2\tau-2+2m}).
			\end{aligned}
		\end{equation}
	\end{proof}
	
	We remark that \eqref{BoundaryLaplacian3} contains many terms of the mass in the limit $L\to\infty$. However, there is an unwanted factor of $\frac{1}{2}$ in the last line of the last terms. This will be remedied in Lemma \ref{LemmaCurvatureRestTerm}.\\
	
	On three-dimensional Riemannian manifolds, a variation of the next lemma was proved in \cite{brayHarmonic2022} using the Gauss-Bonnet Theorem, rewriting the integral of the scalar curvature as a boundary integral. In higher dimensions, this is generally not possible. However, this method is viable on Kähler manifolds by Theorem \ref{MassTheorem} as the level sets are complex submanifolds of a Kähler manifold.
	
	\begin{lemma}\label{MassScalarCurvIneqRightHand}
		The second term on the left-hand side of \eqref{IntegralLemma2} is bounded from above by
		\begin{equation*}
			\int_{s\in\C} \int_{\Sigma_s} R_{\Sigma_s} \dA_g ds \leq \lim_{L\to\infty} \int_{T_L} \sum_{k,l=3}^{2m} (g_{kl,k} - g_{kk,l})x_l \dA_{\eucl}.
		\end{equation*}
	\end{lemma}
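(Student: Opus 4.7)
The plan is to apply the Hein--LeBrun mass formula (Theorem \ref{MassTheorem}) to each level set $\Sigma_s$, regarded as an AE Kähler manifold in its own right of complex dimension $m-1$. This is the strategy signposted in the paragraph immediately preceding the statement: the level sets are complex submanifolds of a Kähler manifold, so even though the integrated scalar curvature on $X$ cannot in general be rewritten as a boundary integral in dimensions $\geq 3$, on each slice $\Sigma_s$ it can.

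First I would verify that for $s\in\C$ a regular value of $z_1$ (a set of full measure by Sard), $\Sigma_s$ inherits the structure of a complete AE Kähler manifold: the restrictions $(z_2,\dots,z_m)|_{\Sigma_s}$ form a holomorphic AE chart, and Theorem \ref{BiholomorphismAsympt} together with \eqref{MetricAEExpansion} gives decay of rate $\tau>m-1>m-2$, which is strictly above the minimum rate needed in complex dimension $m-1$. Moreover, the holomorphic $(m-1,0)$-form $dz_2\wedge\cdots\wedge dz_m$ restricts to a section of $K_{\Sigma_s}$, so the argument behind \eqref{MassFirstTerm} transfers verbatim and gives $-\langle\clubsuit(c_1(\Sigma_s)),[\omega_{\Sigma_s}^{m-2}]\rangle\geq 0$. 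Applying Theorem \ref{MassTheorem} to $\Sigma_s$ and discarding this nonnegative Chern contribution then yields the pointwise-in-$s$ estimate
\begin{equation*}
\int_{\Sigma_s} R_{\Sigma_s}\,\dA_g \;\leq\; \frac{4(2m-3)\pi^{m-1}}{(m-2)!}\,\m_{\Sigma_s}.
\end{equation*}

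Integrating in $s\in\C$ reduces the lemma to expressing $\int_\C \m_{\Sigma_s}\,ds_1\,ds_2$ as a limit of Euclidean boundary integrals on $T_L$. For $|s_1|,|s_2|\leq L$, the relevant boundary of $\Sigma_s\cap\Omega_L$ is, up to $\OO(r^{-\tau})$ errors, the Euclidean $(2m-3)$-sphere $\{|(x_3,\dots,x_{2m})|=L\}\cap\Sigma_s$ with outer unit normal $\nu_l=x_l/L$ for $l\geq 3$. Writing $\m_{\Sigma_s}$ as the defining limit of its ADM boundary integral on this sphere and applying Fubini over $s\in[-L,L]^2$ (noting that the disjoint union of these spheres is precisely $T_L$), the uniform AE decay $(g_{kl,k}-g_{kk,l})=\OO(r^{-\tau-1})$ legitimizes exchanging the limit $L\to\infty$ with the $s$-integration. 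Combining with the pointwise bound above gives the claimed inequality, with the constants $\frac{(m-2)!}{4(2m-3)\pi^{m-1}}$ cancelling between the Kähler mass formula on $\Sigma_s$ and the ADM normalization, and the outer-normal factor $\nu_l=x_l/L$ producing the $x_l$ weight in the stated boundary integrand on $T_L$.

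The principal obstacle is Step 1: the careful bootstrap that the induced structure on generic $\Sigma_s$ is AE Kähler in the sense required by Theorem \ref{MassTheorem}, together with the Chern-class nonnegativity \eqref{MassFirstTerm}, and that these hold uniformly enough in $s$ to justify the Fubini and limit interchanges. Once the results of Section \ref{SectionComplexAE} are in place this is essentially bookkeeping; the substantive conceptual ingredient is the recognition that level sets of the holomorphic coordinate $z_1$ are themselves AE Kähler, so that the dimension-reduction from $X$ to $\Sigma_s$ retains the special feature---exclusive to real dimension $2$ in the Riemannian world---that allows a global scalar-curvature integral to be realized as a boundary term.
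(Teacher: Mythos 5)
Your proposal follows essentially the same route as the paper's proof: apply the Hein--LeBrun mass formula (Theorem \ref{MassTheorem}) to each regular slice $\Sigma_s$ as an AE K\"ahler manifold of complex dimension $m-1$, discard the nonpositive Chern-class pairing as in \eqref{MassFirstTerm}, and then integrate over $s$ via Fubini to assemble the boundaries $\partial(\Omega_L\cap\Sigma_s)$ into $T_L$ with $\nu_l = x_l/L$. Your bookkeeping of the constants and of the normal factor is consistent with the paper's argument, so there is nothing substantive to add.
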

	
	\begin{proof}
		By Theorem \ref{MassFormula} and because $\Sigma_s$ is a complex AE Kähler submanifold for almost all $s\in \C$, we know that
		\begin{align*}
			\int_{\Sigma_s} R_{\Sigma_s} \dA_g =& \frac{4\pi\gen{\clubsuit(c_1(\Sigma_s)),[\omega^{m-2}]}}{(m-2)!} \\ &+ \lim_{L\to\infty} \int_{\partial (\Omega_L \cap \Sigma_s)} \sum_{k,l=3}^{2m} (g_{kl,k} - g_{kk,l})\nu_l \dA_{\eucl}.
		\end{align*}
		The pairing $\gen{\clubsuit(c_1(\Sigma_s)),[\omega^{m-2}]} \leq 0$ (see \eqref{MassFirstTerm}) is negative. Furthermore, we easily see that
		\begin{equation*}
			\partial( \Omega_L \cap \Sigma_s) = \{x_1+ix_2 = s \} \times \left\{|(x_3,\dots,x_{2m})| = L \right\}
		\end{equation*}
		assuming $|\Re s|\leq L$ and $|\Im s| \leq L$, otherwise the set is empty. Hence, it follows that
		\begin{align*}
			&\lim_{L\to\infty}\int_{s\in\C} \int_{\Omega_L \cap \Sigma_s} R_{\Sigma_s} \dA_g ds_1ds_2 \\
			\leq& \lim_{L\to\infty} \int_{s\in\C} \int_{\partial (\Omega_L \cap \Sigma_s)} \sum_{k,l=3}^{2m} (g_{kl,k} - g_{kk,l})\nu_l \dA_{\eucl} ds_1ds_2\\
			%=&\lim_{L\to\infty} \int_{|\Re s|\leq L, |\Im s| \leq L} \frac{1}{L}\int_{\partial (\Omega_L \cap %\Sigma_s)} \sum_{k,l=3}^{2m} (g_{kl,k} - g_{kk,l})x_l \dA_{\eucl} ds_1ds_2 \\
			=&\lim_{L\to\infty}  \frac{1}{L}\int_{T_L} \sum_{k,l=3}^{2m} (g_{kl,k} - g_{kk,l})x_l \dA_{\eucl}.
		\end{align*}
	\end{proof}

	The final step in proving the mass inequality is to show that the double integral on the right-hand side of \eqref{IntegralLemma2} is bounded below by some Hessians and a boundary term as $L\to \infty$. The boundary term then exactly remedies the factor of $\frac{1}{2}$ in Lemma \ref{Lemma3}, and neither the double integral nor the factor of $\frac{1}{2}$ is apparent in \cite{brayHarmonic2022}. The double integrals appear as we consider level sets of holomorphic functions, i.e. real codimension-two manifolds, and so curvature terms only involving normal components can exist in the complex case.\\
	
	To remedy the double integral and factor of $\frac{1}{2}$, we use the formula for the second variation of minimal submanifolds to rewrite $\gen{\Rm_{X}(\hat x_1, \hat x_2)\hat x_2, \hat x_1}$ via terms appearing in this expression. As $\Sigma$ is a complex submanifold of a Kähler manifold, it is absolutely area-minimizing with respect to compactly supported perturbations by the Wirtinger inequality. As we will see, it is also minimizing with respect to certain variations without compact support, and we will choose a good variation proving the desired inequality of Lemma \ref{LemmaCurvatureRestTerm}. This idea to use the variation of minimal surfaces is an adaptation of the minimal surface argument for negative mass appearing in \cite{schoenproof1979}. Now, let $\Sigma(t)$ be a variation of $\Sigma$ with a normal variational vector field $F_t$. By the variational formula for submanifolds \cite[Eq. (1.142) and (1.143)]{coldingCourse2011}:
	\begin{equation}\label{VariationEq}
		\begin{aligned}
			\frac{d^2}{dt^2} \area(\Sigma(t))\Big|_{t=0} =& \int_{\Sigma} - |\gen{s(\cdot,\cdot),F_t}|^2 + |\nabla^\perp_\Sigma F_t|^2 - \tr_\Sigma \gen{\Rm_{X}(F_t,\cdot)\cdot, F_t}\\
			&+ \int_{\partial\Sigma} \gen{\nabla_{F_t} F_t, \nu}d A_g, 
		\end{aligned}
	\end{equation}
	where $\nabla^\perp_\Sigma$ is the gradient restricted to $\Sigma$ and projected onto the normal components, and $\nu$ is the outward pointing unit normal of $\partial \Sigma$.

	\begin{lemma}\label{LemmaCurvatureRestTerm}
		The final term of \eqref{IntegralLemma2} is bounded by below by
		\begin{align*}
			\int_{s\in\C}\int_{\Omega_L \cap \Sigma_s}&- |\sff|^2 - \tr_\Sigma (\gen{\Rm_X(\hat x_1,\cdot)\cdot,\hat x_1} + \gen{\Rm_X(\hat x_2,\cdot)\cdot,\hat x_2}) \, \dA_g ds_1ds_2\\
			\geq& -\int_{\Omega_L} \frac{1}{2}|\nabla^2 x_1|^2 + \frac{1}{2}|\nabla^2 x_2|^2 \dvol_{g} + \frac{1}{2L}\int_{T_L} \sum_{i=1}^2\sum_{k=3}^{2m}  g_{ii,k} x_k \dA_{\eucl}\\ &+ \OO(L^{-2\tau-2+2m}).
		\end{align*}
	\end{lemma}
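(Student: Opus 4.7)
My plan is a second-variation-of-area argument on the level sets $\Sigma_s$. For every regular value $s\in\C$, $\Sigma_s$ is a complex submanifold of the K\"ahler manifold $X$, hence calibrated by $\omega^{m-1}/(m-1)!$ and therefore absolutely area-minimizing. I fix a smooth cut-off $\phi_L\colon X\to[0,1]$ with $\phi_L\equiv 1$ on $\Omega_L$ and $\phi_L\equiv 0$ outside a slight enlargement $\Omega_{L+\eta}$, and apply the second-variation formula \eqref{VariationEq} with $F=\phi_L\hat x_1$ and $F=\phi_L\hat x_2$; both fields are compactly supported and normal to $\Sigma_s$, so the boundary term vanishes and the minimality of $\Sigma_s$ forces $\frac{d^2}{dt^2}\area(\Sigma_s(t))|_{t=0}\geq 0$. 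The K\"ahler identities $\nabla x_2=J\nabla x_1$ and $|\nabla x_1|=|\nabla x_2|$ show that $\{\hat x_1,\hat x_2\}$ is an orthonormal frame of the normal bundle of $\Sigma_s$, so $|\langle\sff,\hat x_1\rangle|^2+|\langle\sff,\hat x_2\rangle|^2=|\sff|^2$. Using $|\nabla^\perp_\Sigma(\phi_L\hat x_i)|^2=\phi_L^2|\nabla^\perp_\Sigma\hat x_i|^2+|\nabla^\Sigma\phi_L|^2$ (the cross term vanishes because $|\hat x_i|^2\equiv 1$) and summing over $i=1,2$ rearranges the inequality to
\begin{equation*}
\int_{\Sigma_s}\phi_L^2\bigl[-|\sff|^2-\tr_\Sigma(\langle\Rm(\hat x_1,\cdot)\cdot,\hat x_1\rangle+\langle\Rm(\hat x_2,\cdot)\cdot,\hat x_2\rangle)\bigr]\dA_g\geq -\int_{\Sigma_s}\phi_L^2\bigl[|\nabla^\perp_\Sigma\hat x_1|^2+|\nabla^\perp_\Sigma\hat x_2|^2\bigr]\dA_g-2\int_{\Sigma_s}|\nabla^\Sigma\phi_L|^2\dA_g.
\end{equation*}

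I then integrate over $s\in\C$ and apply the complex coarea formula (Proposition \ref{CoareaComplex}) to rewrite $\int_\C\int_{\Sigma_s}\cdots\dA_g\,ds_1ds_2$ as $\int_X\cdots|\nabla x_1|^2\dvol_g$. A direct computation from $\hat x_1=\nabla x_1/|\nabla x_1|$, using $Y(|\nabla x_1|)=\nabla^2 x_1(Y,\hat x_1)$ and the K\"ahler identity $\nabla^2 x_2(Y,Z)=-\nabla^2 x_1(Y,JZ)$ (from $\nabla J=0$), gives, for $Y\in T\Sigma_s$,
\begin{equation*}
(\nabla_Y\hat x_1)^\perp=|\nabla x_1|^{-1}\nabla^2 x_1(Y,\hat x_2)\hat x_2,\qquad (\nabla_Y\hat x_2)^\perp=|\nabla x_1|^{-1}\nabla^2 x_2(Y,\hat x_1)\hat x_1,
\end{equation*}
after cancellation of the $\hat x_i$-component with the radial term coming from $\nabla(|\nabla x_1|^{-1})$. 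Hence $(|\nabla^\perp_\Sigma\hat x_1|^2+|\nabla^\perp_\Sigma\hat x_2|^2)|\nabla x_1|^2=2\sum_{j\in T\Sigma}\nabla^2 x_1(e_j,\hat x_2)^2$ pointwise, so after coarea and letting $\eta\to 0$ the first integral on the right-hand side of the previous display becomes $-\int_{\Omega_L}2\sum_j\nabla^2 x_1(e_j,\hat x_2)^2\dvol_g$, while the cut-off term $2\int_\C\int_{\Sigma_s}|\nabla^\Sigma\phi_L|^2\dA_g\,ds$ is controlled using $|\nabla\phi_L|\lesssim 1/\eta$ on an annular shell of thickness $\eta$ together with the AE decay, and is absorbed into the final error by an appropriate choice of $\eta$.

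The main obstacle is the identity
\begin{equation*}
\int_{\Omega_L}\bigl[\tfrac{1}{2}|\nabla^2 x_1|^2+\tfrac{1}{2}|\nabla^2 x_2|^2-2\sum_{j\in T\Sigma}\nabla^2 x_1(e_j,\hat x_2)^2\bigr]\dvol_g=\frac{1}{2L}\int_{T_L}\sum_{i=1}^2\sum_{k=3}^{2m}g_{ii,k}x_k\dA_\eucl+O(L^{-2\tau-2+2m}).
\end{equation*}
The identity $|\nabla^2 x_1|^2=|\nabla^2 x_2|^2$ (another consequence of $\nabla^2 x_2=-\nabla^2 x_1(\cdot,J\cdot)$ and $J$ being an isometry) reduces the left-hand side to the integral of the nonnegative quantity $|\nabla^2 x_1|^2-2\sum_j\nabla^2 x_1(e_j,\hat x_2)^2$, which collects precisely the squared Hessian components missed by the second-variation gradient, namely those of the form $\nabla^2 x_1(\hat x_1,\cdot)$, $\nabla^2 x_1(\hat x_2,\hat x_2)$, and the purely tangential $\nabla^2 x_1(e_j,e_k)$. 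Inserting the AE expansion $(\nabla^2 x_1)_{ab}\approx-\tfrac{1}{2}(h_{a1,b}+h_{b1,a}-h_{ab,1})$ with $h=g-\delta$, and exploiting the K\"ahler relations $h_{12}=0$, $h_{11}=h_{22}$, together with the harmonicity $\sum_a(\nabla^2 x_1)_{aa}=0$, the leading-order integrand reorganizes into a coordinate divergence. The divergence theorem on $\Omega_L$ then produces boundary integrals; the K\"ahler identities force the contributions on $D_{i,L}^\pm$ to cancel pairwise, leaving exactly the claimed $T_L$-integral, while all quadratic cross terms and subleading corrections from the Christoffel expansion produce the error $O(L^{-2\tau-2+2m})$. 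The delicate part is the precise bookkeeping of this divergence structure and of the $D_{i,L}^\pm$-cancellation: both rely crucially on the K\"ahler identities among the components of $h$.
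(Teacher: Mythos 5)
Your setup (second variation of the calibrated level sets, the identification $|\nabla^\perp_\Sigma\hat x_1|^2+|\nabla^\perp_\Sigma\hat x_2|^2 = 2|\nabla x_1|^{-2}\sum_j\nabla^2 x_1(e_j,\hat x_2)^2$, and the coarea step) matches the paper's strategy, but the argument breaks at the point where the $T_L$-boundary term must be produced. The identity you propose,
\begin{equation*}
\int_{\Omega_L}\Bigl[\tfrac{1}{2}|\nabla^2 x_1|^2+\tfrac{1}{2}|\nabla^2 x_2|^2-2\sum_{j}\nabla^2 x_1(e_j,\hat x_2)^2\Bigr]\dvol_g=\frac{1}{2L}\int_{T_L}\sum_{i=1}^2\sum_{k=3}^{2m}g_{ii,k}x_k\dA_{\eucl}+\OO(L^{-2\tau-2+2m}),
\end{equation*}
cannot hold. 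Its left-hand side is, as you note yourself, the integral of a pointwise \emph{nonnegative} sum of squared Hessian components; it is quadratic in $h=g-\delta$, hence of size $\OO(L^{2m-2\tau-2})$ and convergent as $L\to\infty$. The right-hand side is \emph{linear} in $h$, of size $\OO(L^{2m-\tau-2})$, and of indefinite sign; for a Schwarzschild-type perturbation $g_{ii}=1+cr^{2-2m}$ with $c>0$ one has $\sum_k g_{ii,k}x_k<0$, so the boundary term is strictly negative while your bulk term is nonnegative. Scaling $h\mapsto\lambda h$ makes the mismatch explicit (one side scales like $\lambda^2$, the other like $\lambda$). The paper obtains this boundary term from an entirely different source: the variation field is deliberately \emph{not} compactly supported (it interpolates to $\hat x_1/|\nabla x_1|$ near $T_L$ via the cutoff $\chi_L$), so the boundary term $\int_{\partial\Sigma}\gen{\nabla_{F_t}F_t,\nu}\dA_g$ in \eqref{VariationEq} survives and is computed in \eqref{HessianExpansion} to equal exactly $-\frac{1}{2L}\sum_k g_{11,k}x_k$ to leading order. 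The price of using a non-compact variation is that stability is no longer automatic from the calibration; the paper pays it with the Stokes argument \eqref{IntegrationStokesTrick}, using that $\omega(Y,\hat x_1)=\gen{Y,\hat x_2}=0$ along the lateral boundary of the swept-out region. Your compactly supported $\phi_L$ kills precisely the term the lemma needs, and since that term is generically negative it cannot simply be dropped from a lower bound.

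There is a second, independent problem with the compactly supported cutoff: the error $2\int_\C\int_{\Sigma_s}|\nabla^\Sigma\phi_L|^2\dA_g\,ds_1ds_2\approx\int_{\text{shell}}|\nabla\phi_L|^2\dvol_g\sim L^{2m-1}/\eta$ for a transition shell of thickness $\eta$, and even taking $\eta\sim L$ this is $\sim L^{2m-2}$, which diverges for $m\geq 2$. A cutoff dropping from $1$ to $0$ has non-vanishing Dirichlet energy here because the transverse directions have real dimension $2m-2\geq 2$ (positive capacity); this is exactly why the paper's $\chi_L$ interpolates between $1$ and $1/|\nabla x_1|=1+\OO(r^{-\tau})$ rather than between $1$ and $0$, giving $|\nabla_\Sigma\chi_L|^2=\OO(L^{-2\tau-2})$ in \eqref{CutOffPermutation} and an error that genuinely vanishes.
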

	
	\begin{proof}
		On $\Sigma_L$, the double integral on the right-hand side of \eqref{IntegralLemma2} is
		\begin{equation}\label{IntegralLeftover}
			\int_{s\in\C}\int_{\Omega_L \cap \Sigma_s}- |\sff|^2 - \tr_\Sigma (\gen{\Rm_X(\hat x_1,\cdot)\cdot,\hat x_1} + \gen{\Rm_X(\hat x_2,\cdot)\cdot,\hat x_2}) \, \dA_g ds_1ds_2.
		\end{equation}
		We want to choose a variational vector field $F_t$ such that, when inserted into \eqref{VariationEq}, we obtain the terms of \eqref{IntegralLeftover}. To start, let $F_t = \frac{\hat x_i}{|\nabla x_i|}$ be the variational vector field for $i=1,2$. Choosing $i=1$, the flow of $\Sigma \coloneqq \Sigma_{s_1+is_2}$ along $F_t$ gives new level sets $\Sigma_{(s_1+t)+is_2}$, which will be desirable. However, using this variation field in \eqref{VariationEq} gives an extra factor of $\frac{1}{|\nabla x_1|^2}$ in the integral compared to \eqref{IntegralLeftover}. Therefore, choose the following variational vector field that interpolates between $\hat x_1$ and $\frac{\hat x_1}{|\nabla x_1|}$ as follows: let $u^2 = |z_2|^2 + \dots + |z_m|^2$ and define (a smoothing of) the continuous cutoff function:
		\begin{equation*}
			\chi_L(z_1,\dots, z_m) = \begin{cases}
				1 & u \leq \frac{L}{4}\\
				\frac{4}{L}(\frac{1}{|\nabla x_1|}-1)u + (2-\frac{1}{|\nabla x_1|}) & \frac{L}{4} < u < \frac{L}{2}\\
				\frac{1}{|\nabla x_1|} & u \geq \frac{L}{2}.
			\end{cases}
		\end{equation*}
		Let $F_t \coloneqq \chi_L \hat x_1$. Define $a>0$ to be small enough such that $E_a \coloneqq \{\Sigma_{(s_1+t)+is_2} \cap \Omega_L \mid t\in [0,a]\}$ is a smooth $(2m-1)$-dimensional submanifold consisting of regular level sets. Let $\Sigma^t_{s_1+is_2}$ be the flow of $\Sigma_{s_1+is_2}$ by $F_t$.
		\begin{equation*}
			E'_a \coloneqq \{\Sigma^t_{s_1+is_2} \cap \Omega_L \mid t\in [0,a] \}.
		\end{equation*}
		By definition of $F_t$, then
		\begin{equation*}
			\Sigma_{(s_1+t)+is_2} \cap \left(\Omega_L \setminus \Omega_{\frac{L}{2}} \right) = \Sigma_{s_1+is_2}^t \cap \left(\Omega_L \setminus \Omega_{\frac{L}{2}} \right).
		\end{equation*}
		In a neighborhood of the boundary $\partial E_a' \cap \partial \Omega_L$, which is along the flowlines, the level sets are complex submanifolds. Let $p\in \partial E_a' \cap \partial \Omega_L$ and $Y\in T_p(\Sigma_{s_1+is_2}^t)$. Then, $Y \perp \hat x_1$. Since the level sets are complex submanifolds close to $\partial E_a' \cap \partial \Omega_L$, it follows that $Y \perp \hat x_2$ as well. By Stokes' Theorem, it follows that
		\begin{equation}\label{IntegrationStokesTrick}
			\begin{aligned}
				0 &= \int_{E_a'} \frac{1}{(m-1)!}d(\omega^{m-1})\\ &= -\int_{\Sigma_{s_1+is_2}} \frac{1}{(m-1)!}\omega^{m-1} + \int_{\Sigma_{s_1+is_2}^t}\frac{1}{(m-1)!}\omega^{m-1} \\
				&\leq -\area_g (\Sigma_{s_1+is_2}) + \area_g(\Sigma_{s_1+is_2}^t),
			\end{aligned}
		\end{equation}
		With equality if and only if $\Sigma_{(s_1)+is_2}^t$ is a complex submanifold, as proven by the Wirtinger inequality. Since the evaluation of the integral along $\partial E_a'\cap \partial \Omega_L$ contains a factor of
		\begin{equation*}
		\omega(Y,\hat x_1) = \gen{Y,\hat x_2} = 0,
		\end{equation*}
		the integral along the flowlines is zero, resulting in \eqref{IntegrationStokesTrick}. We conclude that
		\begin{equation*}
			\area_g (\Sigma_{s_1+is_2}) \leq \area_g(\Sigma_{s_1+is_2}^t),
		\end{equation*}
		showing that the variation \eqref{VariationEq} for $F_t$ is nonnegative. Thus,
		\begin{equation}\label{PositiveVariation1}
			\begin{aligned}
				0 \leq&  \int_{\Sigma\cap \Omega_L} -|\gen{s(\cdot,\cdot),F_t}|^2 + |\nabla^\perp_\Sigma F_t|^2 - \tr_\Sigma \gen{\Rm_{X}(F_t,\cdot)\cdot, F_t} \, \dA_g \\ &+  \int_{\partial\Sigma} \gen{\nabla_{F_t} F_t, \nu} d A_g\\
				=& \int_{\Sigma \cap \Omega_L} - \chi_L^2|\gen{s(\cdot,\cdot),\hat x_1}|^2 + \left|\nabla_\Sigma \left(\chi_L \hat x_1 \right)\right|^2  - \chi_L^2 \tr_\Sigma \gen{\Rm_{X}(\hat x_1,\cdot)\cdot, \hat x_1} \, \dA_g\\
				&+ \int_{\partial\Sigma} \gen{\nabla_{\left(\frac{\hat x_1}{|\nabla x_1|}\right)} \left(\frac{\hat x_1}{|\nabla x_1|}\right), \nu}d A_g.
			\end{aligned}
		\end{equation}
		We can repeat the same arguments for $F_t = \chi_{L} \hat x_2$ to obtain
		\begin{equation}\label{PositiveVariation2}
			\begin{aligned}
				0 \leq& \int_{\Sigma \cap \Omega_L} - \chi_L^2|\gen{s(\cdot,\cdot),\hat x_2}|^2 + \left|\nabla_\Sigma \left(\chi_L \hat x_2 \right)\right|^2  - \chi_L^2 \tr_\Sigma \gen{\Rm_{X}(\hat x_2,\cdot)\cdot, \hat x_2} \, \dA_g\\
				&- \int_{\partial\Sigma} \gen{\nabla_{\left(\frac{\hat x_2}{|\nabla x_1|}\right)} \left(\frac{\hat x_2}{|\nabla x_1|}\right), \nu}d A_g.
			\end{aligned}
		\end{equation}
		Integrate the nonnegative \eqref{PositiveVariation1} and \eqref{PositiveVariation2} over $s\in \C$ and subtract them from \eqref{IntegralLeftover} to obtain
		\begin{equation}\label{VariationalInequality}
			\begin{aligned}
				&\int_{s\in\C}\int_{\Omega_L \cap \Sigma_s} - |\sff|^2 - \tr_\Sigma (\gen{\Rm_X(\hat x_1,\cdot)\cdot,\hat x_1} + \gen{\Rm_X(\hat x_2,\cdot)\cdot,\hat x_2}) \, \dA_g ds_1ds_2 \\
				\geq& \int_{s\in\C}\int_{\Omega_L \cap \Sigma_s} - (1-\chi_{L}^2) |\sff|^2 -(1-\chi_{L}^2) \tr_\Sigma\Big(\gen{\Rm_{X}(\hat x_1,\cdot)\cdot, \hat x_1} \\ &+ \gen{\Rm_{X}(\hat x_2,\cdot)\cdot, \hat x_2} \Big)
				- \chi_{L}^2\left(|\nabla^\perp_\Sigma \hat x_1|^2 + |\nabla^\perp_\Sigma \hat x_1|^2\right) - 2|\nabla_\Sigma \chi_{L}|^2   \, \dA_g ds_1ds_2 \\
				&- \int_{\partial\Sigma} \gen{\nabla_{\left(\frac{\hat x_1}{|\nabla x_1|}\right)} \left(\frac{\hat x_1}{|\nabla x_1|}\right), \nu} +  \gen{\nabla_{\left(\frac{\hat x_2}{|\nabla x_1|}\right)} \left(\frac{\hat x_2}{|\nabla x_1|}\right), \nu}d A_g.
			\end{aligned}
		\end{equation}
		
		We analyze these terms individually. First, for $\frac{L}{4}< u< \frac{L}{2}$:
		\begin{equation}\label{CutOffPermutation}
			\begin{aligned}
				|\nabla_\Sigma \chi_{L}|^2 &= \left|\frac{4}{L}\left(\nabla_\Sigma \frac{1}{ |\nabla x_1|}\right)u +  \frac{4}{L}\left(\frac{1}{ |\nabla x_1|}-1\right)\nabla_\Sigma u\right |^2 = \OO(L^{-2\tau-2})
			\end{aligned}
		\end{equation}
		by the AE asymptotics. Furthermore, $|\nabla_\Sigma \chi_L|^2 = |\nabla_\Sigma \frac{\hat x_1}{|\nabla x_1|}|^2 = \OO(r^{-2\tau-2})$ for $r\geq \frac{L}{2}$ and $\nabla_\Sigma \chi_L = 0$ for $r \leq \frac{L}{4}$. As $\tau > m-1$, the integral of $|\nabla_\Sigma \chi_{L}|^2$ vanishes as $L\to\infty$. Next,
		\begin{equation*}
			1-\chi_L^2 = \OO(r^{-\tau})
		\end{equation*}
		with $1-\chi_L^2 \to 0, L\to \infty$. As $\Rm_{X} = \OO(r^{-\tau-2})$, $|\sff| = \OO(r^{-\tau - 2})$, and $(1-\chi_L) = 0$ for $|u|\leq L$, dominated convergence implies that
		\begin{align*}
			&\int_{s\in\C}\int_{\Omega_L \cap \Sigma_s} - (1-\chi_{L}^2) \left(|\sff|^2 - \tr_\Sigma\left(\gen{\Rm_{X}(\hat x_1,\cdot)\cdot, \hat x_1} + \gen{\Rm_{X}(\hat x_2,\cdot)\cdot, \hat x_2} \right)\right)\\ &= \OO(L^{-2\tau-2+2m})
		\end{align*} 
		for some $\epsilon>0$. 
		
		For the final term in the integral over $\Sigma$, the complex coarea formula implies that
		\begin{align*}
			&\int_{s\in\C}\int_{\Omega_L \cap \Sigma_s} \chi_{L}^2(|\nabla^\perp_\Sigma \hat x_1|^2 + |\nabla^\perp_\Sigma \hat x_2|^2)  \, \dA_g ds_1ds_2\\
			=& \int_{\Omega_L} \chi_{L}^2 |\nabla x_1|^2 (|\nabla^\perp_\Sigma \hat x_1|^2 + |\nabla^\perp_\Sigma \hat x_2|^2) \dvol_g.
		\end{align*}
		As $J \hat x_1 = - \hat x_2$ and for a basis $e_3,\dots,e_{2m}\in T_p\Sigma$, $p\in \Sigma$, then
		\begin{equation}\label{HessianIneq}
			\begin{aligned}
				\chi_L^2 |\nabla x_1|^2 |\nabla^\perp_\Sigma \hat x_1|^2 &\leq \chi_L^2\sum_{k=3}^{2m}|\nabla x_1|^2 \gen{\nabla_{e_k} \hat x_1, \hat x_1}^2 + |\nabla x_1|^2 \sum_{k=3}^{2m} \gen{\nabla_{e_k} \hat x_1, \hat x_2}^2 \\
				%&=\chi_L^2\sum_{k=3}^{2m} \gen{\nabla_{e_k} \hat x_1, \nabla x_2}^2\\
				%&= \chi_L^2\sum_{k=3}^{2m} \gen{\nabla_{e_k} \nabla x_2, \hat x_1}^2\\
				%&= \chi_L^2\sum_{k=3}^{2m} \left(\frac{1}{2} \gen{\nabla_{e_k} \nabla x_2, \hat x_1}^2 + %\frac{1}{2}\gen{\nabla_{\hat x_1} \nabla x_2, e_k}^2 \right) \\
				&\leq \chi_L^2\frac{1}{2}|\nabla^2 x_2|^2.
			\end{aligned}
		\end{equation}
		as $\hat x_1$ and $\hat x_2$ are perpendicular and by compatibility with $J$. The same computation shows that 
		\begin{equation*}
			\chi_L^2 |\nabla x_1|^2 |\nabla^\perp_\Sigma \hat x_2|^2 \leq \chi_L^2\frac{1}{2}|\nabla^2 x_2|^2.
		\end{equation*}

		Finally, for the boundary term and $i=1$:
		\begin{align*}
			\gen{\nabla_{\left(\frac{\hat x_1}{|\nabla x_1|}\right)}\frac{\hat x_1}{|\nabla x_1|}, \nu } %&= -\gen{\frac{\nabla x_1}{|\nabla x_1|^2}, \nabla_{\left(\frac{\hat x_1}{|\nabla x_1|}\right)}\nu }\\
			%&= -\frac{1}{|\nabla x_1|^3} \gen{\nabla x_1, \nabla_{\hat x_1} \nu }\\
			&= \frac{1}{|\nabla x_1|^3} \gen{\nabla_{\hat x_1} \nabla x_1, \nu }.
		\end{align*}	
		As $\nu = \sum_{k=3}^{2m} \frac{x_k}{L} \del_k + \OO(r^{-\tau})$ and letting repeated indices denote summation over $1,\dots,2m$, then
		\begin{equation}\label{HessianExpansion}
			\begin{aligned}
				\gen{\nabla_{\hat x_1} \nabla x_1, \nu } &= \gen{\nabla_{\hat x_1} g^{k1}\del_k, \nu} \\
				&= g^{k1} \gen{\Gamma_{1k}^l \del_l, \nu} + g^{k1}_{\;\;\;,1} \gen{\del_k,\nu} \\
				&= g^{k1} \Gamma_{1k}^l \sum_{j=3}^{2m} g_{lj}\frac{x_j}{L} + g^{k1}_{\;\;\;,1} \sum_{j=3}^{2m} g_{kj} \frac{x_j}{L} + \OO(r^{-2\tau-1})\\
				%&=  g^{k1} \left(\frac{1}{2} g^{lp}\left(g_{kp,1} + g_{1p,k} - g_{1k,p} \right)  \right) %\sum_{j=3}^{2m} g_{lj}\frac{x_j}{L} + g^{k1}_{\;\;\;,1} \sum_{j=3}^{2m} g_{kj} \frac{x_j}{L} + %\OO(r^{-2\tau-1})\\
				%&= \sum_{j=3}^{2m} g^{k1} \left(\frac{1}{2} \left(g_{kj,1} + g_{1j,k} - g_{1k,j} \right)  \right) %\frac{x_j}{L} - \sum_{j=3}^{2m}  g^{k1} g_{kj,1} \frac{x_j}{L} + \OO(r^{-2\tau-1})\\
				%&= \frac{1}{2} \sum_{j=3}^{2m} \left( \left(g_{1j,1} + g_{1j,1} - g_{11,j} \right)  \right) %\frac{x_j}{L} - \sum_{j=3}^{2m}  g_{1j,1} \frac{x_j}{L} + \OO(r^{-2\tau-1})\\
				&= -\frac{1}{2L} \sum_{j=3}^{2m}  g_{11,j} x_j + \OO(r^{-2\tau-1}).
			\end{aligned}
		\end{equation}
		%The second term of the third-to-last line inequality follows from \eqref{MetricDerivativeTrick}, and the last inequality is by the AE condition. \eqref{HessianExpansion} also holds when replacing $i=1$ by $i=2$. 
		By \eqref{HessianIneq} and \eqref{HessianExpansion}, using the coarea formula for the boundary term and subtracting the end result from \eqref{IntegralLeftover}, then
		\begin{equation}\label{key}
			\begin{aligned}
				&\int_{s\in\C}\int_{\Omega_L \cap \Sigma_s} \Big(- |\sff|^2 - \sum_{k=3}^{2m} (\gen{\Rm_X(\hat x_1,e_k)e_k,\hat x_1}\\ &+ \gen{\Rm_X(\hat x_2,e_k)e_k,\hat x_2}) \Big)\, \dA_g ds_1ds_2 \\
				\geq&  \int_{s\in\C} \Bigg(\int_{\Omega_L \cap \Sigma_s} \frac{1}{|\nabla x_1|^2}\left(- \frac{1}{2}|\nabla^2 x_1|^2 - \frac{1}{2}|\nabla^2 x_2|^2\right) \\
				&+ \frac{1}{2L} \int_{\partial (\Omega_L\cap \Sigma_s)} \frac{1}{|\nabla x_1|^3} \sum_{k=3}^{2m}  g_{11,k} x_k \dA_g \Bigg)ds_1ds_2\ + \OO(L^{-2\tau-2+2m})\\
				=& -\int_{\Omega_L} \frac{1}{2}|\nabla^2 x_1|^2 + \frac{1}{2}|\nabla^2 x_2|^2 \dvol_{g} + \frac{1}{2L}\int_{T_L} \frac{1}{|\nabla x_1|} \sum_{i=1}^2\sum_{k=3}^{2m}  g_{ii,k} x_k \dA_{\eucl}\\ &+ \OO(L^{-2\tau-2+2m}).
			\end{aligned}
		\end{equation}
		We could remove $\chi_L^2$ from $|\nabla^2 x_i|^2$ for $i=1,2$ up to an error $\OO(L^{-2\tau-2+2m})$ as $|\nabla^2 x_i|^2 = \OO(r^{-2\tau-2})$ and $\chi_L \equiv 1$ for $u\leq \frac{L}{4}$. As $\frac{1}{|\nabla x_1|} = 1 + \OO(r^{-\tau})$ and $g_{ii,k} = \OO(r^{-\tau-1})$, then
		\begin{align*}
			&\frac{1}{2L}\int_{T_L} \frac{1}{|\nabla x_1|} \sum_{i=1}^2\sum_{k=3}^{2m}  g_{ii,k} x_k \dA_{\eucl}\\ =& \frac{1}{2L}\int_{T_L} \sum_{i=1}^2\sum_{k=3}^{2m}  g_{ii,k} x_k \dA_{\eucl} + \OO(L^{-2\tau-2+2m}),
		\end{align*}
		proving the lemma.
	\end{proof}

	As the choice of $z=z_1$ above was arbitrary, the computations work for all $z_i$'s, and the lower bound of the ADM mass (Theorem \ref{MTheoremMassBound}) can now be proven.

	\begin{theorem}\label{TheoremMassBound}
	Let $(X^{2m},g,J)$ be an AE Kähler manifold with nonnegative and integrable scalar curvature $R_g$. Let $z = x_1 + ix_2$ be one of the holomorphic coordinate functions from Theorem \ref{BiholomorphismAsympt}. Then
	\begin{equation}\label{MassInequality}
	\m(g) \geq \frac{(m-1)!}{4(2m-1)\pi^m} \int_X \frac{1}{2}|\nabla^2 x_1|^2 + \frac{1}{2}|\nabla^2 x_2|^2 + \morm{\nabla x_1}^2 R_g \dvol_g.
	\end{equation}
	\end{theorem}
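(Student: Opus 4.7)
The plan is to assemble Lemmas \ref{Lemma2}, \ref{Lemma3}, \ref{MassScalarCurvIneqRightHand}, and \ref{LemmaCurvatureRestTerm} for the exhausting family of domains $\Omega_L$ and then let $L\to\infty$. All the hard analytic work is already in place, so what remains is essentially a careful accounting of how the several boundary contributions on $T_L$ fit together to recover the full ADM integrand, and how the Hessian pieces combine to give the factor of $\frac{1}{2}$.

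Concretely, I would start from Lemma \ref{Lemma2} with $\Omega=\Omega_L$, writing its left-hand side as $A_1+A_2$ with $A_1 = 2\int_{C_L}\morm{\nabla x_1}\partial_\nu\morm{\nabla x_1}\dA_g$ and $A_2 = \int_{\C}\int_{\Sigma_s\cap\Omega_L}R_{\Sigma_s}\dA_g\,ds$, and its right-hand side as $B_L+C_L$, where $B_L = \int_{\Omega_L}(|\nabla^2 x_1|^2+|\nabla^2 x_2|^2+\morm{\nabla x_1}^2 R_g)\dvol_g$ is the ``good'' volume integral and $C_L$ is the double integral of the leftover curvature terms. Rearranging as $A_1 = B_L + C_L - A_2$, I would then plug in: Lemma \ref{Lemma3} expands $A_1$ as the ADM integrand on each face $D_{i,L}^{\pm}$ plus a $T_L$-term carrying a spurious $\frac{1}{2}$; Lemma \ref{LemmaCurvatureRestTerm} bounds $C_L$ from below by $-\frac{1}{2}\int_{\Omega_L}(|\nabla^2 x_1|^2+|\nabla^2 x_2|^2)$ plus a $T_L$-term of the form $\frac{1}{2L}\int_{T_L}\sum_{i,k}g_{ii,k}x_k\dA_{\eucl}$ that exactly cancels that spurious $\frac{1}{2}$; and Lemma \ref{MassScalarCurvIneqRightHand} bounds $A_2$ from above by the indices-$\{3,\ldots,2m\}$ piece of the ADM integrand on $T_L$. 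On the volume side, $B_L$'s $|\nabla^2 x_i|^2$ plus Lemma \ref{LemmaCurvatureRestTerm}'s $-\frac{1}{2}|\nabla^2 x_i|^2$ leaves precisely $\frac{1}{2}|\nabla^2 x_1|^2+\frac{1}{2}|\nabla^2 x_2|^2+\morm{\nabla x_1}^2 R_g$, the integrand promised by \eqref{MassInequality}.

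The crucial combinatorial verification is that the three $T_L$-contributions (from Lemmas \ref{Lemma3}, \ref{LemmaCurvatureRestTerm}, and \ref{MassScalarCurvIneqRightHand}) telescope to
\begin{equation*}
\frac{1}{L}\int_{T_L}\sum_{k=1}^{2m}\sum_{l=3}^{2m}(g_{kl,k}-g_{kk,l})x_l\,\dA_{\eucl},
\end{equation*}
which is the ADM mass density on $T_L$ since the outer normal there has components $\nu_l = x_l/L$ for $l\geq 3$. Combined with the contributions on $D_{i,L}^{\pm}$ from Lemma \ref{Lemma3}, this reproduces the full boundary integral $\int_{\partial\Omega_L}\sum_{k,l}(g_{kl,k}-g_{kk,l})\nu_l\,\dA_{\eucl}$. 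Because the AE expansion \eqref{MetricAEExpansion} gives $\tau>m-1$, the error $\OO(L^{-2\tau-2+2m})\to 0$ as $L\to\infty$, so by \eqref{ADMMass} applied along the exhausting cubes $\Omega_L$ (a standard consequence of Bartnik's argument for well-definedness) the boundary sum converges to $\frac{4(2m-1)\pi^m}{(m-1)!}\,\m(g)$.

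The main obstacle I anticipate is precisely this bookkeeping on $T_L$: ensuring the three separate $\frac{1}{L}\int_{T_L}$ contributions complement each other on overlapping index ranges (with $i\in\{1,2\}$ from Lemma \ref{Lemma3} and $k,l\in\{3,\ldots,2m\}$ from Lemma \ref{MassScalarCurvIneqRightHand}) without double-counting, and that the Kähler identities $g_{12}=0$, $g_{11}=g_{22}$ (used implicitly in deriving Lemma \ref{Lemma3}) remain consistent through the index shift. Once this match is verified, dividing by $\frac{4(2m-1)\pi^m}{(m-1)!}$ and taking $L\to\infty$ yields \eqref{MassInequality}.
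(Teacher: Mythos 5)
Your proposal is correct and follows essentially the same route as the paper's own (very terse) proof: both assemble Lemmas \ref{Lemma2}, \ref{Lemma3}, \ref{MassScalarCurvIneqRightHand}, and \ref{LemmaCurvatureRestTerm} on $\Omega_L$, cancel the spurious factor of $\frac{1}{2}$ in the $T_L$-term of Lemma \ref{Lemma3} against the boundary term of Lemma \ref{LemmaCurvatureRestTerm}, and recognize the surviving boundary integrals over $D_{i,L}^{\pm}$ and $T_L$ as the ADM flux through $\partial\Omega_L$ in the limit $L\to\infty$. Your explicit verification that the three $T_L$-contributions telescope to $\frac{1}{L}\int_{T_L}\sum_{k=1}^{2m}\sum_{l=3}^{2m}(g_{kl,k}-g_{kk,l})x_l\,\dA_{\eucl}$ is exactly the bookkeeping the paper leaves implicit, and it checks out.
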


	\begin{proof}
	The proof follows by combining Lemma \ref{Lemma3}, \ref{MassScalarCurvIneqRightHand}, and \ref{LemmaCurvatureRestTerm} with \eqref{IntegralLemma2} for the domain $\Omega = \Omega_L$ and taking $L\to\infty$. This gives
	\begin{align*}
	&\lim_{L\to\infty} \Bigg( \int_{D_{1,L}^+} \sum_{k=1}^{2m}\left( g_{1k,k}  - g_{kk,1}\right) \dA_{\eucl}  +\int_{D_{2,L}^+}  \sum_{k=1}^{2m}\left( g_{2k,k}  - g_{kk,2}\right) \dA_{\eucl} 
	\\ &+\int_{D_{1,L}^-}  \sum_{k=1}^{2m}\left(- g_{1k,k}  + g_{kk,1}\right) \dA_{\eucl}  + \int_{D_{2,L}^-} \sum_{k=1 }^{2m}\left(- g_{2k,k}  + g_{kk,2}\right) \dA_{\eucl}\\
	&+\frac{1}{L} \int_{T_L} \sum_{i=1}^2  \sum_{k=3}^{2m}(g_{ik,i} -  g_{ii,k}) x_k \dA_{\eucl} +  \int_{T_L} \sum_{k,l=3}^{2m} (g_{kl,k} - g_{kk,l})x_l \dA_{\eucl} \Bigg) \\
	\geq& \lim_{L\to\infty} \int_{\Omega_L} \frac{1}{2}|\nabla^2 x_1|^2 + \frac{1}{2}|\nabla x_2|^2 + |\nabla x_1|^2 R_g \dvol_g.
	\end{align*}
	We recognize the left-hand side as the ADM mass $\m(g)$ up to the constant in \eqref{MassFormula}, proving the theorem.
	\end{proof}
	
	\begin{remark}
	There is a more direct way of proving Theorem \ref{TheoremMassBound} if one is willing to assume a higher-order decay on the metric and scalar curvature. Work by Qi Yao \cite[Theorem D]{yaoMass2022} shows that if for some large $k>0$:
	\begin{equation*}
		|\nabla^j(g-g_{\eucl})| = \OO(r^{-\tau-j}), \quad |\nabla^j R_g| = \OO(r^{-2\tau-2 - j}), \quad j=0,\dots,k,
	\end{equation*}
	then, for some dimensional constant $C(m)>0$, the Kähler form $\omega$ has the expansion:
	\begin{equation*}
		\omega =\begin{cases}
			dd^c r^{2} + C(m)\m(\omega)dd^cr^{4-2m} + \OO(r^{-2\tau})& m\geq 3\\
			dd^c r^{2} + \frac{3}{2}\m(\omega)dd^c\log(r) + \OO(r^{-2\tau})& m= 2.\\
		\end{cases}
	\end{equation*}
	Lemmas \ref{Lemma3}, \ref{MassScalarCurvIneqRightHand}, and \ref{LemmaCurvatureRestTerm} then directly equal some constants times the mass as $L\to\infty$. Especially Lemma \ref{Lemma3} is simplified as the right-hand side of \eqref{BoundaryLaplace1} then equals the mass times some constant, thus avoiding expanding $g_{11,1}$ and using the divergence theorem.\\
	
	Independent of this and in $\dim_{\R} X = 4$, Lemma \ref{LemmaCurvatureRestTerm} can also be proven by choosing the log-cutoff.
		\begin{equation*}
			\chi_{K,L,M} \coloneqq \mathbbm{1}_{|s| \leq M} \begin{cases}
				1 & |z_2| < K\\
				1- \frac{\log(\frac{|z_2|}{K})}{\log (\frac{L}{K})} & K \leq |z_2| \leq L \\
				0 & |z_2| > L,
			\end{cases}
		\end{equation*}
		and first taking $L\to\infty, K\to\infty$, and finally $M\to\infty$. This cutoff was extensively used in the proof of the Positive Mass Theorem by Schoen-Yau \cite{schoenproof1979}. For other applications, see \cite{coldingCourse2011}.
	\end{remark}

	From Theorem \ref{TheoremMassBound}, we give a new proof of the Positive Mass Theorem for AE Kähler manifolds.
	
	\begin{corollary}\label{CorollaryPMT}
		\sloppy Given $(X^{2m},g,J)$ as in Theorem \ref{TheoremMassBound}, the $\m(g)\geq 0$ and $\m(g)=0$ if and only if $(X,g,J)$ is isometric and biholomorphic to $(\C^m, g_{\eucl}, J_{\eucl})$.
	\end{corollary}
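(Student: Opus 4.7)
The inequality $\m(g) \geq 0$ follows immediately from Theorem \ref{TheoremMassBound}: since $R_g \geq 0$ by hypothesis and every term on the right-hand side of \eqref{MassInequality} is manifestly nonnegative, the mass is bounded below by zero. For the rigidity portion, suppose $\m(g) = 0$. Applying Theorem \ref{TheoremMassBound} to each of the $m$ holomorphic coordinate functions $z_j = x_{2j-1} + i x_{2j}$ from Theorem \ref{BiholomorphismAsympt}, and using that each summand in the integrand of \eqref{MassInequality} is nonnegative, we conclude pointwise that $\nabla^2 x_k \equiv 0$ for all $k = 1, \ldots, 2m$ and $|\nabla x_1|^2 R_g \equiv 0$.

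Since each $\nabla x_k$ is then a parallel vector field and the Levi-Civita connection is metric, both $|\nabla x_k|^2$ and $\langle \nabla x_k, \nabla x_l\rangle$ are constants on $X$. The AE asymptotic condition $g_{ij} \to \delta_{ij}$ forces these constants to equal $\delta_{kl}$, so $(\nabla x_1, \ldots, \nabla x_{2m})$ is a global parallel orthonormal frame on $X$; in particular $(X, g)$ is flat, and $R_g \equiv 0$ follows from $|\nabla x_1|^2 R_g \equiv 0$ together with $|\nabla x_1| \equiv 1$. Since the real one-forms $dx_1, \ldots, dx_{2m}$ are linearly independent everywhere, so are the complex differentials $dz_1, \ldots, dz_m$, whence the holomorphic top form $\Omega = dz_1 \wedge \cdots \wedge dz_m$ is nowhere vanishing. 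By the discussion following Theorem \ref{MassTheorem}, this forces the exceptional divisor $E = \Omega^{-1}(0)$ to be empty, so the biholomorphism of Theorem \ref{BiholomorphismAsympt} is actually a biholomorphism $\Psi \colon X \to \C^m$. Finally, the orthonormality of the gradient frame yields the dual coframe identity $g = \sum_{k=1}^{2m} dx_k \otimes dx_k = \Psi^* g_{\eucl}$, so $\Psi$ is also an isometry. The converse direction is trivial.

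The main obstacle is the equality-case analysis: translating the pointwise vanishing of the coordinate Hessians into global rigidity. The crucial observation is that parallelism of the coordinate gradients, combined with their asymptotic normalization, promotes them to a global orthonormal frame. This in turn forbids any blow-down, and upgrades the biholomorphism from Theorem \ref{BiholomorphismAsympt} to the desired isometric biholomorphism with $(\C^m, g_{\eucl}, J_{\eucl})$.
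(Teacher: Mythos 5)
Your proof is correct and follows the same overall strategy as the paper: nonnegativity is immediate from \eqref{MassInequality}, and the equality case is driven by the vanishing of the Hessians $\nabla^2 x_k$ and the resulting parallel frame. The details differ in two places, both in your favor. First, to rule out a nontrivial resolution, the paper reads off $\gen{\clubsuit c_1(X),[\omega]^{m-1}}=0$ directly from the mass formula of Theorem \ref{MassTheorem} (both summands there are nonnegative and sum to zero), whereas you deduce that $\Omega = dz_1\wedge\cdots\wedge dz_m$ is nowhere vanishing from parallelism plus the asymptotic normalization of the Gram matrix $\langle\nabla x_k,\nabla x_l\rangle$, and then invoke the same discussion after Theorem \ref{MassTheorem} to conclude $E=\emptyset$; the two routes are equivalent since the pairing computes the area of $E$. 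Second, for the isometry the paper passes through flatness and cites the local-isometry characterization of flat manifolds, asserting that a complete flat manifold is isometric to $g_{\eucl}$ — which, as stated, needs simple connectivity (supplied here by $X\cong\C^m$) to rule out quotients. Your coframe identity $g=\sum_k dx_k\otimes dx_k=\Psi^*g_{\eucl}$ sidesteps that issue entirely and is the cleaner conclusion. Note also that your justification of everywhere linear independence of the $\nabla x_k$ (constancy of the parallel Gram matrix plus its value at infinity) is more careful than the paper's, which asserts linear independence without comment even though the pulled-back coordinates degenerate on any exceptional set; your ordering of the argument avoids that circularity.
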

	
	\begin{proof}
	The ADM mass is nonnegative by \eqref{MassInequality}, as the scalar curvature is nonnegative. For the equality case, take $\m(g)=0$ with nonnegative scalar curvature. Then, we have:
	\begin{equation*}
		-\gen{\clubsuit c_1(X),[\omega]} = 0,
	\end{equation*}
	implying that $X$ is biholomorphic to $\C^m$. Furthermore, $\nabla^2 x_i = 0$ for all $i=1,\dots,2m$, and since the $\nabla x_i$'s are linearly independent for all $i$, we obtain a basis of parallel vector fields $(\nabla x_1,\dots, \nabla x_{2m})$ at all points. The proof of the fact that a manifold is locally isometric to Euclidean spaces if and only if it is flat (see \cite[Theorem 7.10]{leeIntroduction2018}) shows that $(X,g)$ is a complete flat Riemannian manifold and hence isometric to $g_{\eucl}$.
	\end{proof}

	\section{Stability of the Positive Mass Theorem}

Given the ADM mass inequality of Theorem \ref{TheoremMassBound}, we are ready to prove two new stability results of the Positive Mass Theorem on AE Kähler manifolds. The first result holds for general sequences of AE Kähler manifolds with ADM masses converging to zero. In this case, it is necessary to remove certain sets with controlled and vanishing areas in the limit. The following is Theorem \ref{MCorollaryStabNoRic} of the introduction.

\begin{theorem}\label{CorollaryStabNoRic}
	Let $(X_i^{2m},g_i, J_i)$ be a sequence of AE Kähler manifolds of fixed complex dimension $m$ with nonnegative and integrable scalar curvature, and suppose that the ADM masses $\m(g_i) \to 0$ as $i\to\infty$. Then for all $i$, there exists a domain $Z_i\subset X_i$ such that $X_i\setminus Z_i$ converges in the pointed Gromov-Hausdorff sense to $(\R^{2m},g_{\eucl})$:
	\begin{equation*}
		(X_i\setminus Z_i,\hat d_{g_i},p_i) \, \overset{pGH}{\longrightarrow} \, (\R^{2m}, d_{\eucl},0),
	\end{equation*}
	where $p_i\in M_i\setminus Z_i$ is any choice of basepoint and $\hat d_{g_i}$ is the induced length metric of $g_i$ on $X_i\setminus Z_i$. Furthermore, for any continuous $\xi\colon (0,\R) \to (0,\infty)$ such that $\lim_{x\to 0^+} \xi(x) = 0$, we have for $i$ large enough:
	\begin{equation*}
		\area(\partial Z_i) \leq \frac{\m(g_i)^{\frac{m}{2m-2}+\frac{1}{2}}}{\xi(\m(g_i))}.
	\end{equation*}
\end{theorem}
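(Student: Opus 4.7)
The plan is to follow the strategy of Dong-Song for the three-dimensional Riemannian case, substituting the integral inequality of Theorem \ref{TheoremMassBound} for that of Bray-Kazaras-Khuri-Stern, and replacing their two-dimensional subplane selection with an iterated dimension-reduction argument tailored to the real codimension-two level sets of holomorphic functions.

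First, apply Theorem \ref{TheoremMassBound} to each holomorphic coordinate $z_1,\dots,z_m$ from Theorem \ref{BiholomorphismAsympt}. Writing $z_k=x_{2k-1}+ix_{2k}$, this yields the global $L^2$ Hessian bound
\[
\sum_{k=1}^{2m}\int_{X_i} |\nabla^2 x_k|^2 \dvol_{g_i} \leq C(m)\,\m(g_i)\longrightarrow 0,
\]
so the coordinate map $\Phi_i=(x_1,\dots,x_{2m})\colon X_i\to\R^{2m}$ is, in a Hessian-averaged sense controlled by $\m(g_i)$, close to an isometry. Define the bad set $Z_i$ as the locus where an integral average of $|d\Phi_i-\mathrm{id}|$ over a small coordinate cube exceeds a threshold $\eta=\eta(\m(g_i))\to 0$ to be chosen. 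A Chebyshev-type estimate from the Hessian bound, combined with a coarea argument that selects a good level of the defining function for $\partial Z_i$, yields the prescribed area bound
\[
\area(\partial Z_i) \leq \frac{\m(g_i)^{\frac{m}{2m-2}+\frac{1}{2}}}{\xi(\m(g_i))}
\]
upon balancing the Sobolev exponent against the vanishing rate of $\xi$. The specific exponent $\tfrac{m}{2m-2}+\tfrac12$ is forced by this balance and is the natural higher-dimensional analogue of the Dong-Song exponent.

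Second, on $X_i\setminus Z_i$ the remaining task is to show that $\Phi_i$ is close to an isometry in the \emph{length-metric} sense. Local comparison of $\hat d_{g_i}(p,q)$ with $|\Phi_i(p)-\Phi_i(q)|$ for nearby points follows from pointwise control of $d\Phi_i$ obtained by a Morrey/Sobolev embedding applied to the Hessian bound. Upgrading this to arbitrary distances requires producing, inside each coordinate cube surrounding a level set, a real two-dimensional affine plane on which all $2m-2$ remaining coordinate functions simultaneously behave well enough to build an approximately straight path. This is the hard step: in Dong-Song's three-dimensional case a single averaging produces the required plane inside a $3$-cube, whereas in dimension $2m$ the level set of a single $z_k$ is already real codimension two, so the remaining $2m-2$ real coordinates must all be nearly affine on the same two-plane. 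I will implement this via an iterated slicing: starting from the $2m$-cube, at each step apply Chebyshev's inequality to the Hessian $L^2$-bound for one remaining coordinate $x_j$ to select an affine slab of one lower dimension on which $x_j$ is quantitatively affine, and iterate $2m-2$ times. Since Theorem \ref{TheoremMassBound} furnishes an independent Hessian budget for each $z_k$, a Fubini accounting keeps the cumulative loss dominated by $\m(g_i)\,\eta(\m(g_i))^{-1}$, which still tends to $0$ if $\eta$ is chosen to decay slowly enough.

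Finally, once a good two-plane exists in every such cube, connecting two arbitrary points of $X_i\setminus Z_i$ by nearly straight paths running inside successive planes yields the chord-arc estimate
\[
\big|\hat d_{g_i}(p,q) - |\Phi_i(p)-\Phi_i(q)|\big| = o(1)
\]
uniformly on bounded sets, which is the quantitative $\epsilon$-isometry condition for pointed Gromov-Hausdorff convergence $(X_i\setminus Z_i,\hat d_{g_i},p_i)\overset{pGH}{\longrightarrow}(\R^{2m},d_{\eucl},0)$. The iterated slicing that produces the good two-plane is the principal new ingredient beyond Dong-Song's framework in dimension three, and will constitute the main technical work of the proof.
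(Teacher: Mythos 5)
Your overall architecture matches the paper's: substitute Theorem \ref{TheoremMassBound} for the Bray--Kazaras--Khuri--Stern inequality in Dong--Song's scheme, excise a sublevel-type bad set with area controlled via coarea plus the isoperimetric inequality, and then iterate a dimension-reduction argument inside small cubes to find a good real two-plane. However, two of your key steps would fail as stated. First, you cannot get ``pointwise control of $d\Phi_i$ by a Morrey/Sobolev embedding applied to the Hessian bound'': the bound $\sum_k\int_{X_i}|\nabla^2 x_k|^2\leq C\m(g_i)$ lives in $W^{1,2}$, which does not embed into $L^\infty$ in dimension $2m\geq 4$. The paper (following Dong--Song) instead defines $F=\sum_{j,k}(g(\nabla x_j,\nabla x_k)-\delta_{jk})^2$ and takes the good region to be a sublevel set $\Omega_{0,6\epsilon}$ of $F$, on which $|\mathrm{Jac}(x)-\mathrm{Id}|\leq\epsilon'$ holds by definition; the Hessian bound is used only to control $\int|\nabla F|^2\leq C\m(g)$, and the coarea formula then selects a regular level $S_{\tau_0}=F^{-1}(\tau_0)$ of small area, which is what produces both $Z_i$ and the stated exponent $\frac{m}{2m-2}+\frac12$.

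Second, your iterated slicing is aimed at the wrong object. Making each coordinate ``quantitatively affine'' on successively lower-dimensional slabs does not let you connect two points of a cube within $X_i\setminus Z_i$: the obstruction is not large Hessian along the path but the possibility that the straight segment passes through the excised region $\W$ bounded by $\Sigma=\Psi(S_{\tau_0})$. What must be sliced is $\Sigma$ itself: the paper uses the coarea formula at each stage to choose a hyperplane $A^{d}_{\k,\delta_1}(t_{\k,d})$ so that $\Sigma^{d}_{\k}=\Sigma^{d+1}_{\k}\cap A^{d}_{\k,\delta_1}(t_{\k,d})$ satisfies $\area_{d}(\Sigma^{d}_{\k})\leq\area_{d+1}(\Sigma^{d+1}_{\k})/(\delta_0\delta_1)$, terminating in a two-plane whose one-dimensional trace of $\Sigma$ has small length, so a segment can detour around $\W$ at cost $\area_1(\Sigma^1_{\k})$. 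Equally essential is the inductive construction $C^{d}_{\k}(\delta_1)'=C^{d-1}_{\k}(\delta_1)'\cup\bigl(C^{d}_{\k}(\delta_1)\cap\pi_d^{-1}(C^{d-1}_{\k}(\delta_1)'\setminus\pi_d(\Sigma^{d-1}_{\k}))\bigr)$, which guarantees that the good part of each cube is path-connected, has almost full volume, and projects down to the two-plane without crossing $\W$; your proposal has no analogue of this, and without it the ``Fubini accounting'' does not yield the diameter bound $\diam(C^{2m}_{\k}(\delta_1)')\leq 5\delta_1$ needed for the Gromov--Hausdorff estimate.
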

	
\begin{proof}
	Theorem \ref{CorollaryStabNoRic} was proved by Dong-Song on three-dimensional Riemannian manifolds using the integral inequality \cite{brayHarmonic2022}, the (real) coarea formula, and basic estimates. We will now show how to adapt the proof to AE Kähler manifolds of any dimension. Let $(z_1,\dots, z_m) = (x_1+ix_2,\dots, x_{2m-1}+i x_{2m})$ be the holomorphic coordinates obtained from Theorem \ref{BiholomorphismAsympt}. On $(X,g)$, we define the smooth function
	\begin{equation*}
		F(p) = \sum_{j,k=1}^{2m} \left(g(\nabla x_j,\nabla x_k) - \delta_{jk} \right)^2.
	\end{equation*}
	The proof consists of two parts.\\
	
	\textbf{Part 1} (see \cite[Section 3]{dongStability2023}): For the first part, let $\Omega_{a,b} \coloneqq F^{-1}([a,b])$. Choose $\epsilon\ll 1$ small enough to obtain
	\begin{equation}\label{Eq2DongSong}
		|\text{Jac} (x)- \text{Id}|_g\leq \epsilon'
	\end{equation}
	on the set $\Omega_{0,6\epsilon}$ for some other small $\epsilon' \coloneqq 100\sqrt{\epsilon}\ll 1$. Next, on the same set, we have
	\begin{equation}\label{Eq4DongSong}
		\begin{aligned}
			|\nabla F(p)| &= \sum_{j,k=1}^{2m}2|g(\nabla x_j, \nabla x_k) - \delta_{jk}|\cdot(|\nabla x_j||\nabla^2 x_k| + |\nabla^2 x_j||\nabla x_k| )\\ &\leq C \sum_{j=1}^{2m} |\nabla^2 x_j|.
		\end{aligned}
	\end{equation}
	Now comes the only time that \cite{dongStability2023} uses the ADM mass inequality \cite[Theorem 1.2]{brayHarmonic2022}: Combining \eqref{Eq2DongSong} and \eqref{Eq4DongSong}, we have
	\begin{equation}\label{MassFIneq}
		\begin{aligned}
			\int_{\Omega_{0,6\epsilon}} |\nabla F(p)|^2 &\leq C \sum_{j=1}^{2m}\int_{\Omega_{0,6\epsilon}}|\nabla^2 x_j|^2\\
			&\leq C \sum_{j=1}^{2m}\int_{\Omega_{0,6\epsilon}}\frac{|\nabla^2 x_j|^2}{|\nabla x_j|}\\
			&\leq C \m(g).
		\end{aligned}
	\end{equation}
	In their case, they replace $2m$ by $3$ in the sum. The ADM mass inequality of \cite{brayHarmonic2022} involves integrating $\frac{|\nabla^2 x_j|^2}{|\nabla x_j|}$ and thus gives the last inequality. However, as the inequality \eqref{MassInequality} only has the integrand $|\nabla^2 x_j|^2$ (plus the term with the scalar curvature), we can skip the penultimate inequality of \eqref{MassFIneq} and conclude that
	\begin{equation*}
		\int_{\Omega_{0,6\epsilon}} |\nabla F|^2 \leq C \m(g)
	\end{equation*}
	in our setting of AE Kähler manifolds.\\
		
		From this, they use the coarea formula and the isoperimetric inequality to find a $\tau_0 \in [0,6\epsilon]$ such that
		\begin{equation*}
			\area (S_{\tau_0}) \leq \frac{C\m(g)^2}{\epsilon^4},
		\end{equation*}
		where $S_{\tau_0} = F^{-1}(\tau_0)$. Mutatis mutandis, the exponents change due to dimensional reasons and become
		\begin{equation}\label{AreaSBoundedEpsilon}
			\area (S_{\tau_0}) \leq \frac{C\m(g)^{\frac{m}{2m-2}+\frac{1}{2}}}{\epsilon^{\frac{2m}{2m-2}+1}}.
		\end{equation}
		Via $S_{\tau_0}$ and continuing to follow \cite{dongStability2023}, we can find a set $E\subset X$ connected to infinity such that $\Psi|_E$, where $\Psi$ is the map $\colon X \to \C^m$ from Theorem \ref{TheoremMassBound}, is a diffeomorphism and $\area (\partial \Psi(E))$ is small.
		
		\textbf{Part 2} (see \cite[Section 4]{dongStability2023}): In the next and final section of \cite{dongStability2023}, Dong and Song apply the isoperimetric inequality and coarea formula to show that $\Psi(E)$ (after slightly modifying it) converges to $\R^{2m}$ in the pointed Gromov-Hausdorff sense, and $\area_{\eucl}(\partial \Psi(E)) \to 0$ as $\m(g) \to 0$. This analysis is almost independent of the dimension and only requires an estimate of the form
		\begin{equation}\label{AreaSBounded}
			\area (S_{\tau_0}) \leq f(\m(g))
		\end{equation}
		for a continuous function $f\colon \R_+ \to \R_+$ on the right-hand side, such that $\lim_{y\to 0^+} f(y) = 0$.
		
		We first introduce the necessary notation from \cite{dongStability2023}, generalized to our setting, and starting from \eqref{15} and ending at Remark \ref{Remark4.2}, construct the extra steps of the proof required in our setting. Any arguments coming after these steps to finish the proof are straightforward generalizations of proofs in \cite{dongStability2023}. One should always think of the mass $\m(g)$ as being very small.\\
	 	
	 	Fix a continuous function $\xi \colon (0,\infty)\to (0,\infty)$ such that
	 	\begin{equation*}
	 		\lim_{y\to 0^+} \xi(y) = 0, \quad \lim_{y\to 0^+} \frac{y}{\xi(y)} = 0.
	 	\end{equation*}
	 	Choose continuous functions $\xi_0,\xi_1 \colon (0,\infty) \to (0,\infty)$ with $\lim_{y\to 0^+} \xi_0(y) = \lim_{y\to 0^+} \xi_1(y) = 0$, and
	 	\begin{equation*}
	 		\lim_{y\to 0^+} \frac{\xi(y)}{\xi_0^{100m}(y)} = \lim_{y\to 0^+} \frac{\xi_0(y)}{\xi_1^{100m}(y)} = 0.
	 	\end{equation*}
	 	One should think of these functions converging very slowly to $0$ as $y\to 0^+$. Finally, set
	 	\begin{equation*}
	 		\delta_0 \coloneqq \xi_0(\m(g)), \quad \delta_1 \coloneqq \xi_1(\m(g)),
	 	\end{equation*}
	 	so that
	 	\begin{equation*}
	 		\delta_1^{100m} \gg \delta_0 \gg \xi(\m(g))^{\frac{1}{100m}} \gg \m(g)^{\frac{1}{100m}},
	 	\end{equation*}
	 	and set $\epsilon = \delta_0$ in \eqref{AreaSBoundedEpsilon}.
	 	
	 	Let $E$ be the bounded region with boundary $S_{\tau_0}$. Push $S_{\tau_0}$ forward to $\C^{m}\cong\R^{2m}$ as $\Psi(S_{\tau_0})\eqqcolon \Sigma$ via $\Psi$ from Theorem \ref{BiholomorphismAsympt}, and let $\W$ be the compact domain with boundary $\Sigma$. From now on, the function $f$ of \eqref{AreaSBounded} may change from line to line but always has the property that $\lim_{y\to 0^+}f(y) = 0$. Given an $n$-dimensional hypersurface $D^n$ for $0\leq n \leq 2m$, denote by $\area_{n}(D)$ the $n$-dimensional Euclidean Hausdorff measure of $D$. This means that $\vol(\cdot) = \area_{2m}(\cdot)$ and $\length(\cdot) = \area_1(\cdot)$. By the isoperimetric inequality and \eqref{AreaSBoundedEpsilon}, then
	 	\begin{equation*}
	 		\vol(\W) \leq C \area_{2m-1}(\Sigma)^{\frac{2m}{2m-1}} \leq f(\m(g)).
	 	\end{equation*}
	 	For any $\mathbf{k} = (k_1,\dots, k_{2m}) \in \Z^{2m}$, consider the cube $C_{\k}^{2m}(\delta_1)$ given by
	 	\begin{equation*}
	 		C_{\k}^{2m}(\delta_1) \coloneqq (k_1\delta_1,(k_1+1)\delta_1) \times \cdots \times (k_{2m}\delta_1,(k_{2m}+1)\delta_1) \subset \R^{2m}.
	 	\end{equation*}
	 	Let $B_{\k}(r)$ be the Euclidean ball with the same center as $C_{\k}^{2m}(\delta_1)$ and radius $r$. By applying the coarea formula, we can find $r\in (3\delta_1, 3\delta_1 + \delta_0)$ such that $\W\cap \partial B_{\k}(r)$ consists of smooth $(2m-1)$-dimensional manifolds and 
	 	\begin{equation}\label{14}
	 		\area_{2m-1}(\W \cap \partial B_{\k}(r)) \leq \frac{\vol(\W\cap B_{\k}(4\delta_1))}{\delta_0}.
	 	\end{equation}
	 	The relative isoperimetric inequality \cite[Theorem 5.11(ii)]{gariepyMeasure2015} shows that
	 	\begin{align*}
	 		\vol(\W \cap B_{\k}(4\delta_1)) &\leq C \area_{2m-1}(\Sigma \cap B_{\k}(4\delta_1))^{\frac{2m}{2m-1}}\\
	 		&\leq f(\m(g)) \area_{2m-1}(\Sigma \cap B_{\k}(4\delta_1)).
	 	\end{align*}
	 	So by \eqref{14}:
	 	\begin{align*}
	 		\area_{2m-1}(\W \cap \partial B_{\k}(r)) &\leq f(\m(g)) \area_{2m-1}(\Sigma \cap B_{\k}(4\delta_1))\\
	 		&\leq \area_{2m-1}(\Sigma \cap B_{\k}(4\delta_1)).
	 	\end{align*}
	 	This allows us to smooth the hypersurface $(\Sigma \cap B_{\k}(r))\cup (\W \cap \partial B_{\k}(r))$ to get a closed embedded hypersurface $\Sigma_{\k}^{2m-1} \subset B_{\k}(4\delta_1)$ satisfying
	 	\begin{equation}\label{15}
	 		\begin{aligned}
	 			\area_{2m-1}(\Sigma_{\k}^{2m-1}) &\leq 2\area_{2m-1}(\Sigma \cap B_{\k}(4\delta_1)) + 2\area_{2m-1}(\W \cap \partial B_{\k}(r))\\
	 			&\leq 4 \area_{2m-1}(\Sigma \cap B_{\k}(4\delta_1)).
	 		\end{aligned}
	 	\end{equation}
	 	
	 	The main new point of our construction compared to Dong-Song \cite{dongStability2023} starts here, where in dimension three, only one step is needed, but $(2m-2)$-steps in real dimension $2m$: for $t\in \R$, define the $(2m-1)$-dimensional hyperplane
	 	\begin{equation*}
	 		A^{2m-1}_{\k,\delta_1}(t) \coloneqq \{ (x_1,\dots, x_{2m}) \mid x_{2m} = (k_{2m}+t)\delta_1\}.
	 	\end{equation*}
	 	By definition $C_{\k}^{2m}(\delta_1) \subset \cup_{t\in [0,1]} A_{\k,\delta_1}^{2m-1}(t)$. By the coarea formula there exists $t_{\k,2m-1} \in(\frac{1}{2},\frac{1}{2}+\delta_0)$ such that $A_{\k,\delta_1}^{2m-1}(t_{\k,2m-1}) \cap \Sigma_{\k}^{2m-1}$ consists of $(2m-2)$-dimensional hypersurfaces and
	 	\begin{equation}\label{16}
	 		\begin{aligned}
	 			\area_{2m-2}(A_{\k,\delta_1}^{2m-1}(t_{\k,2m-1}) \cap \Sigma_{\k}^{2m-1}) &\leq \frac{\area_{2m-1}(\Sigma_{\k}^{2m-1})}{\delta_0\delta_1}\\
	 			&\leq f(\m(g)).
	 		\end{aligned}
	 	\end{equation}
		
		\sloppy We repeat this process inductively. We find hyperplanes $A_{\k,\delta_1}^d(t_{\k,d}) = \{(x_1,\dots,x_{2m}) \mid x_{d+1} = t_{\k,d}, \dots, x_{2m} = t_{\k,2m-1} \}$ of real dimension $d$ with orthogonal projections $\pi_d\colon A_{\k,\delta_1}^d(t_{\k,d}) \to A_{\k,\delta_1}^{d-1}(t_{\k,d-1})$ such that the following is true: we define inductively $\Sigma_{\k}^d \coloneqq \Sigma^{d+1}_{\k} \cap A_{\k,\delta_1}^d(t_{\k,d})$, where $t_{\k,d+1} \in(\frac{1}{2},\frac{1}{2}+\delta_0)$ is chosen using the coarea formula such that
		\begin{equation}\label{16Ext}
			\begin{aligned}
				\area_{d}(\Sigma_{\k}^{d}) &\leq \frac{\area_{d+1}(\Sigma_{\k}^{d+1})}{\delta_0\delta_1}\\
				&\leq f(\m(g)).
			\end{aligned}
		\end{equation}
		Finally, define $C_{\k}^d(\delta_1) \coloneqq C_{\k}^{2m}(\delta_1) \cap A_{\k,\delta_1}^d(t_{\k,d})$.\\
		
		On $C_{\k}^2(\delta_1)$, let $C_{\k}^2(\delta_1)'$ be the connected component of $C_{\k}^2(\delta_1) \setminus \Sigma_{\k}^1$ with biggest volume and $E_{\k}^2$ the complement. The relative isoperimetric inequality and \eqref{16Ext} implies that
		\begin{equation}\label{17}
			\begin{aligned}
				\area_{2}(E_{\k}^{2}) &\leq C \area_{1}(A_{\k,\delta_1}^{2}(t_{\k,2}) \cap \Sigma_{\k}^{2})^{2}\\
				&\leq f(\m(g))\area_{2}(\Sigma_{\k}^{2})\\
				&\leq \area_{2}(\Sigma_{\k}^{2}).
			\end{aligned}
		\end{equation}
		From this, define $C_{\k}^3(\delta_1)'\coloneqq C_{\k}^2(\delta_1)' \cup \left(C_{\k}^3(\delta_1) \cap \pi_3^{-1}(C_{\k}^2(\delta_1)' \setminus \pi_3(\Sigma_{\k}^2)) \right)$.
		The following two facts are similar to \cite[Lemma 4.1 and 4.2]{dongStability2023}:
		\begin{enumerate}[(i)]
			\item $C_{\k}^{3}(\delta_1)'$ is path connected.
			\item $\area_{3}(C_{\k}^{3}(\delta_1) \setminus C_{\k}^{3}(\delta_1)') \leq f(\m(g))$. \label{Fact2}
		\end{enumerate}
		We only prove the second statement; the first one is almost by definition. Since 	
		\begin{equation}\label{AreaFormula}
			\area_{2}(\pi_{3}(\Sigma_{\k}^{2})) \leq 	\area_{2}(\Sigma_{\k}^{2}),
		\end{equation}
		and by \eqref{17}:
		\begin{equation*}
			\area_{2}(E_{\k}^{2}) \leq \area_{2}(\Sigma_{\k}^{2}),
		\end{equation*}
		then
		\begin{equation*}
			\area_{3}(C_{\k}^{3}(\delta_1) \setminus C_{\k}^{3}(\delta_1)')  \leq 2\delta_1\area_{2}(\Sigma_{\k}^{2}) \leq f(\m(g)).
		\end{equation*}
		We continue this process iteratively: define
		\begin{equation*}
			C_{\k}^d(\delta_1)' \coloneqq C_{\k}^{d-1}(\delta_1)' \cup \left(C_{\k}^d(\delta_1) \cap \pi_d^{-1}(C_{\k}^{d-1}(\delta_1)' \setminus \pi_d(\Sigma_{\k}^{d-1})) \right).
		\end{equation*}
		By \eqref{16Ext}, one shows \eqref{17} (with another exponent), and we obtain fact \eqref{Fact2} for $C_{\k}^{d}(\delta_1)'$ by induction. Hence, it follows that there exists $C_{\k}^{2m}(\delta_1)' \subset C_{\k}^{2m}(\delta_1)$ path connected such that
		\begin{equation*}
			\vol\left(C_{\k}^{2m}(\delta_1) \setminus C_{\k}^{2m}(\delta_1)'  \right) \leq  f(\m(g))
		\end{equation*}
		which is Lemma 4.2 of \cite{dongStability2023} in real dimension $2m$. Hence, $C_\k^{2m}(\delta_1)'$ almost fills out the entire cube $C_\k^{2m}(\delta_1)$ as $i\to\infty$, it is path-connected, and there is a projection $\pi\colon C_\k^{2m}(\delta_1)' \to A^2_{\k,\delta_1}(t_{\k,2})$ not going through $\W$.
		
		\begin{remark}\label{Remark4.2}
			It is not possible to use a higher-dimensional version of the coarea formula to directly find a $t_{\k,2}$ and projection $\pi\colon C^{2m}_{\k}(\delta_1)\to  A^2_{\k,\delta_1}(t_{\k,2})$ as there is no equivalent of the area formula \eqref{AreaFormula} because of $\dim(\Sigma_\k^{2m-1}) > \dim(A^2_{\k,\delta_1}(t_{\k,2}))$.
		\end{remark}
		
		Finally, \cite[Lemma 4.4]{dongStability2023} shows that $\diam(C_{\k}^{2m}(\delta_1)') \leq 5\delta_1$. The proof works exactly as in our case (but modifying $5\delta_1$), namely for any two points $x_1,x_2\in C_{\k}^{2m}(\delta_1)'$, consider the projections
		\begin{equation}\label{ProjectionsX}
			x_i' \coloneqq \pi_3\circ \cdots \circ \pi_{2m}(x_i) \in C_{\k}^{2}(\delta_1)', \quad i=1,2.
		\end{equation}
		Letting $d(\cdot,\cdot)$ denote the induced distance in $C_{\k}^{2m}(\delta_1)'$ via the Euclidean metric, then $d(x_i,x_i') \leq 4m\delta_1$ and
		\begin{equation*}
			d(x_1',x_2') \leq d_{\eucl}(x_1',x_2') + \area_{1}(\Sigma_{\k}^1) \leq 2\delta_1 + f(\m(g)),
		\end{equation*}
	finishing the proof via the triangle inequality. \\
	
	The arguments above show that we can control the Gromov-Hausdorff distance between points in $C_{\k}^{2m}(\delta_1)'$ due to the diameter bound, namely that $d(x,x')\leq 5\delta_1$ for all $x,x'\in C_{\k}^{2m}(\delta_1)'$. To further control the distance between points $x\in C_{\k}^{2m}(\delta_1)'$ and $y\in C_{\k'}^{2m}(\delta_1)'$ for $\k \neq \k'$, Dong-Song uses the coarea formula to show that there exist points $x'\in C_{\k}^{2m}(\delta_1)'$ and $y'\in C_{\k'}^{2m}(\delta_1)'$ such that the straight line between $x'$ and $y'$ does not intersect $\Sigma$. The triangle inequality thus controls the induced distance $d(x,y)$, and we obtain GH control away from $\W$ as desired. 
	\end{proof}

	\begin{remark}
		
		After the arXiv version of this paper appeared, Dong \cite[Appendix]{dongStability2024} also extended the arguments of \cite[Section 4]{dongStability2023} to higher dimensions. His approach to the key part of the proof (between \eqref{15} and Remark \ref{Remark4.2}) is different from ours. In fact, he proves a global theorem by induction on the dimension, rather than iterating the projection argument from \cite{dongStability2023} in each local cube.
	\end{remark}

	For other applications, Kazaras-Khuri-Lee \cite{kazarasStability2021} proved Gromov-Hausdorff convergence to $(\R^3,d_{\eucl})$ for a sequence of three-manifolds $(M^3_i,g_i)$ with a lower bound on the Ricci tensor (plus other conditions) and with ADM masses $(M^3_i,g_i)\to 0$. As the integral inequality of \cite{brayHarmonic2022} was an essential step combined with methods to prove the Cheeger-Colding Almost Splitting Theorem, we directly carry over their proof to the Kähler setting via Theorem \ref{TheoremMassBound} to prove the next theorem (Theorem \ref{MCorollaryStabRicciBound}).

	\begin{definition}\label{Defbtau}
	Given $b>0, \tau>1m-1$, an AE Kähler manifold $(X^{2m},g,J)$ is $(b,\tau)$-asymptotically flat if in the given AE coordinate chart:
	\begin{equation*}
	|\del^k(g_{ij}-\delta_{ij})| = br^{-\tau-k}, \quad k=0,1,2.
	\end{equation*}
	\end{definition}

	Any $(b,\tau)$-asymptotically flat Kähler manifold is then uniformly AE at infinity and with decay rate $-\tau$.
	
	\begin{theorem}\label{CorollaryStabRicciBound}
		Fix $b>0$, $\tau>m-1$, $\kappa>0$, and a point $p\in \R^{2m} \setminus \overline{B_1^{\eucl(0)}}$. Take a pointed sequence $(X_i^{2m}, g_i, J_i,p_i)$, $p_i \in X_i$, of $(b,\tau)$-asymptotically flat Kähler manifolds with nonnegative and integrable scalar curvatures and Ricci curvatures
		\begin{equation*}
			\Ric_{g_i} \geq -2\kappa g_i.
		\end{equation*} 
		Furthermore, fix the AE charts $(\Phi_i)$ of $(X_i, g_i)$ such that $\Phi_i(p_i) = p$. If $\m(g_i)\to 0$, then $(X_i,d_{g_i},p_i)$ converges to $(\R^{2m},d_{\eucl}, p)$ in the pointed Gromov-Hausdorff sense.
	\end{theorem}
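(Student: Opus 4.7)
The plan is to transpose the argument of Kazaras--Khuri--Lee \cite{kazarasStability2021} from three-dimensional Riemannian AE manifolds to AE Kähler manifolds of any complex dimension, substituting Theorem \ref{TheoremMassBound} for the Bray--Kazaras--Khuri--Stern inequality. The two pillars of their proof are (a) an integral inequality that forces the Hessians of a distinguished set of coordinate functions to vanish in $L^2$ as the mass goes to zero, and (b) the Cheeger--Colding Almost Splitting Theorem \cite{cheegerLower1996}, which converts small Hessians of near-linear functions into isometric splittings of the pGH limit. Theorem \ref{TheoremMassBound} supplies exactly the integral inequality we need, now for the $2m$ real components of the global holomorphic coordinates from Theorem \ref{BiholomorphismAsympt} rather than for three ad hoc harmonic coordinates.

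First I would establish pointed Gromov--Hausdorff precompactness of the sequence $(X_i, d_{g_i}, p_i)$. The $(b,\tau)$-asymptotic flatness with fixed basepoints $\Phi_i(p_i) = p$ provides uniform geometry outside a fixed compact set, while the uniform lower Ricci bound $\Ric_{g_i} \geq -2\kappa g_i$ combined with Bishop--Gromov volume comparison yields uniform doubling on all balls of bounded radius. Gromov's precompactness theorem then gives a subsequential pGH limit $(Y, d_Y, p_\infty)$; it suffices to show that any such limit is isometric to $(\R^{2m}, d_{\eucl}, p)$.

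Next, applying Theorem \ref{TheoremMassBound} to each holomorphic coordinate $z_j^{(i)} = x_{2j-1}^{(i)} + i x_{2j}^{(i)}$ and using $\m(g_i) \to 0$ together with $R_{g_i} \geq 0$, I obtain
\begin{equation*}
\int_{X_i} |\nabla^2 x_k^{(i)}|^2 \dvol_{g_i} \longrightarrow 0, \qquad k = 1,\dots,2m.
\end{equation*}
The uniform $(b,\tau)$-flatness guarantees that each $x_k^{(i)}$ is asymptotically a linear Euclidean coordinate with gradients converging to an orthonormal frame at infinity, so the $x_k^{(i)}$ play the role of the $2m$ almost-splitting functions required by the Cheeger--Colding machinery. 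Extracting limits in the pGH sense yields $2m$ functions $\bar x_1,\dots,\bar x_{2m}$ on $Y$ with vanishing distributional Hessians and an asymptotically orthonormal gradient frame; applying the Almost Splitting Theorem once for each direction produces an isometric splitting $Y \cong \R^{2m} \times Z$, and the uniform AE structure at infinity, pinned down by $\Phi_i(p_i) = p$, forces $Z$ to be a point.

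The main obstacle will be the passage from the purely integral bound on $\nabla^2 x_k^{(i)}$ supplied by Theorem \ref{TheoremMassBound} to the pointwise control on the appropriate equivariant Gromov--Hausdorff scale needed to actually invoke the Almost Splitting Theorem; this is the technical heart of \cite{kazarasStability2021}, where one combines the integral Hessian smallness with cutoff arguments and Cheeger--Colding's segment inequality to produce genuine splitting functions. The Kähler setting does not alter this step conceptually, but one must verify that the $(b,\tau)$-flat normalization of the $x_k^{(i)}$ at infinity provides the boundary data that their proof extracts from harmonic coordinates. Finally, the topological hypothesis $\H^2(X,\Z)=0$ imposed in the three-dimensional statement becomes unnecessary here since the coordinates from Theorem \ref{BiholomorphismAsympt} are globally defined on $X$, so no cohomological obstruction to the global existence of the $x_k^{(i)}$ appears.
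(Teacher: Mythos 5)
Your proposal follows essentially the same route as the paper: both substitute Theorem \ref{TheoremMassBound} for the Bray--Kazaras--Khuri--Stern inequality, observe that the $L^2$ Hessian bound $\int_{X_i}|\nabla^2 x_k^{(i)}|^2 \leq C\,\m(g_i)$ is obtained directly (so the Cheng--Yau gradient-estimate step of Kazaras--Khuri--Lee, which converts $\int |\nabla^2 u|^2/|\nabla u|$ into $\int|\nabla^2 u|^2$, becomes unnecessary), and then carry over the Bishop--Gromov and Cheeger--Colding Almost Splitting arguments, which depend only on the lower Ricci bound and work in any dimension. Your additional remarks on precompactness and on the removal of the $\H^2(X,\Z)=0$ hypothesis match the paper's discussion.
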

	
	As \eqref{MassInequality} is an integral on $X$ and not on the unbounded component away from all closed minimal hypersurfaces as in \cite{brayHarmonic2022}, Theorem \ref{CorollaryStabRicciBound} does not require the condition $\H_2(X, \Z) = 0$ appearing in \cite[Theorem 1.2]{kazarasStability2021}.\\
	
	The major difference between Theorems \ref{CorollaryStabNoRic} and \ref{CorollaryStabRicciBound} is that the latter does not remove any sets for the pointed Gromov-Hausdorff convergence. However, Theorem \ref{CorollaryStabRicciBound} simultaneously requires a lower bound on the Ricci curvature and uniform asymptotics at infinity, severely restricting the types of singularities that could appear along the sequence.
	
	\begin{proof}
		On the AE three-manifold $(M^3,g)$, Section 3 of \cite{kazarasStability2021} uses the Cheng-Yau gradient estimate to obtain a bound on the harmonic coordinates $(u_1,u_2,u_3)$ at infinity to show that
		\begin{equation}\label{EqGradtoNoGrad}
			\int_{M^3} \frac{|\nabla^2 u_i|^2}{|\nabla u_i|} \dvol_g \leq C \m(M^3,g )\Rightarrow 	\int_{M^3} |\nabla^2 u_i|^2 \dvol_g \leq C \m(M^3,g),
		\end{equation}
		and the latter sections only use the inequality on the right-hand side. As the harmonic coordinates $(u_1,u_2,u_3)$ are replaced by the holomorphic coordinates $(z_1,\dots,z_m)$ coming from Theorem \ref{BiholomorphismAsympt} in the Kähler case, the ADM mass inequality of Theorem \ref{TheoremMassBound} is equivalent to the right-hand side of \eqref{EqGradtoNoGrad}, and this section is unnecessary in our case. The next sections use Bishop-Gromov volume comparison and Cheeger-Colding's Almost Splitting Theorem \cite{cheegerLower1996}, as well as the methods used to prove this theorem, to finish the proof. These methods only depend on a lower bound on the Ricci curvature and so carry over to any dimension.
	\end{proof}

	\section{Examples of Families of Kähler Metrics on $\C^2$}\label{SectionExamples}
	
	We construct three families of AE Kähler metrics on $(\C^2, J_0)$ with positive and integrable scalar curvatures and the standard complex structure $J_0$. These families are parameterized by $\lambda>0$ and denoted as $(\C^2, g_\lambda, J_0)$. As $\lambda\to 0$, $\m(g_\lambda)\to 0$. The results of Theorems \ref{TheoremMassBound} and \ref{CorollaryStabNoRic} thus apply, and we conclude that by cutting out suitable sets $Z_i\subset \C^2$, $(\C^2\setminus Z_i, d_{\hat g_i}, p) \to (\C^2\setminus, d_{g_{\eucl}}, 0)$ in the pointed GH sense. 
	
	Two of the families have Ricci curvatures unbounded from below, while one family has Ricci curvature bounded from below. Therefore, Theorem \ref{CorollaryStabRicciBound} can be applied to this family. All families converge to $g_{\eucl}$ away from $0$ in the $C^0$-sense as $\lambda\to 0$.
	
	Even if a singularity appears in the family, the order is not great enough to destroy the GH convergence. In other words, all families considered converge to $(\R^4, d_{\eucl})$ in the GH-sense, as shown by direct computations. Applying Theorems \ref{CorollaryStabNoRic} or \ref{CorollaryStabRicciBound} is hence superfluous, but the examples do show that the families of metrics in the theorems are nonempty. Examples of families developing a gravitational well would be interesting for the application of Theorem \ref{CorollaryStabNoRic}.\\

	\subsection{Smoothings of a $\log$-singularity with Ricci curvature unbounded from below}\label{SectionLogSing}
	
	The first example is a family of smoothings of the scalar-flat Kähler metric, usually referred to as the Burns metric:
	\begin{equation*}
		\omega^{\log} = i\del\delbar(r^2 + \log r^2).
	\end{equation*}
	$\omega^{\log}$ has a singularity at $0\in \C^2$, but becomes a smooth metric on the blow-up of $0\in \C^2$ \cite[p. 594]{lebrunCounterexamples1988}. For the first example, let $(z_1,z_2)\in \C^2$ be the usual holomorphic coordinates, $\lambda >0$, and consider the smoothing
	\begin{equation}\label{OmegaLog}
		\omega^{\log}_\lambda = i\del\delbar (r^2 + \lambda\log (r^2 + \lambda)) = \begin{pmatrix}
			\frac{2\lambda^2 + \lambda(2r^2 + |z_2|^2) + r^4}{(r^2 + \lambda)^2} & - \frac{\lambda \overline{z_1}z_2}{(r^2 + \lambda)^2} \\
			- \frac{\lambda z_1\overline{z_2}}{(r^2 + \lambda)^2} & 	\frac{2\lambda^2 + \lambda(2r^2 + |z_1|^2) + r^4}{(r^2 + \lambda)^2}
		\end{pmatrix}.
	\end{equation}
	As $\del\delbar \log (r^2 + \lambda) = \OO(r^{-2})$ the family $(\omega^{\log}_\lambda)$ is AE and $\del\delbar$-exact. Finally, as all $\omega_\lambda^{\log}$ are $\text{U}(n)$-symmetric and by restricting to $\{z_2 =0\}$, the eigenvalues appear on the diagonal and are positive, so we conclude that all $\omega_\lambda^{\log}$ are positive definite.\\
	
	The diagonalized Ricci curvature (or equivalently at $z_2 = 0$) is
	\begin{align*}
	&\Ric_{\omega_\lambda^{\log}}= -\del\delbar \log \left(\omega_\lambda^{\log}\right)^2 \\
	&\diag\big(\lambda \frac{-|z_1|^{12}-	\lambda |z_1|^{10} -28\lambda^2|z_1|^8-40\lambda^3|z_1|^6-8\lambda^4|z_1|^4+32\lambda^5|z_1|^2+24\lambda^6}{(|z_1|^2+\lambda)^2(|z_1|^2+2\lambda)^2(|z_1|^4+2\lambda|z_1|^2+2\lambda^2)^2}), \\ 
	&\frac{\lambda |z_1|^4+4\lambda^2|z_1|^2+6\lambda^3}{|z_1|^8 + 5\lambda |z_1|^6+10\lambda^2|z_1|^4+10\lambda^3|z_1|^2+4\lambda^4}\big)
	\end{align*}
	and so the scalar curvature is (by spherical symmetry)
	\begin{equation*}
		R_{\omega^{\log}_\lambda} = \frac{2\lambda^3r^6 + 20\lambda^4r^4+35\lambda^5r^2 + 24\lambda^6}{(r^2+2\lambda)(r^4+ 2\lambda r^2+ 2\lambda^2)^3} = \OO(r^{-8}).
	\end{equation*}
	One sees that, at $r = \sqrt \lambda$, the first eigenvalue of the Ricci curvature equals
	\begin{equation*}
		-\frac{0.076}{\lambda}
	\end{equation*}
	after rounding, so the Ricci curvature is unbounded from below as $\lambda \to 0$. Similar analysis shows that the second eigenvalue is not bounded from above. Finally, the decay condition on the scalar curvature implies that $R_{\omega^{\log}_\lambda}\in L^1_{\omega^{\log}_\lambda}(\C^2)$. \\
	
	As $c_1(\C^2) = 0$, the mass $\m( \omega^{\log}_\lambda)$ can be computed via Theorem \ref{MassTheorem} as the integral of the scalar curvature:
	\begin{equation*}
		R_g \cdot (\omega_\lambda^{\log})^2 = \frac{8(2\lambda^3r^6 + 20\lambda^4r^4 + 36\lambda^5r^2+24\lambda^6)}{(r^2+2\lambda)(r^4+2\lambda r^2 + 2\lambda^2)^3} \dvol_{\eucl}.
	\end{equation*}
	To estimate the integral of this form, we only keep the highest and lowest order of the polynomial in the numerator and denominator. Hence, we obtain
	\begin{equation*}
		\m( \omega^{\log}_\lambda) = \int_{\C^2} R_{\omega^{\log}_\lambda} ({\omega^{\log}_\lambda})^2 \propto  \int_{r=0}^\infty \frac{\lambda^3 r^6 + \lambda^6}{r^{14} + \lambda^7} r^3 dr = \OO(\lambda\log(\lambda))
	\end{equation*}
	as $\lambda\to 0$. This computation was checked using a computer. Hence, Theorem  \ref{CorollaryStabNoRic} applies to $(\omega^{\log}_\lambda)$ as it is also sufficiently regular at infinity.\\

	Finally, we can compute the distance between $(0,0)\in \C^2$ and any point $p \in \del B_1^{\eucl}(0)$. By spherical symmetry, take $z_1 = (1,0)$. Take the curve $\gamma \colon [0,1] \to \C^2, t\mapsto (t,0)$, which again by spherical symmetry is the shortest path between $(0,0)$ and $(1,0)$:
	\begin{equation}
		\begin{aligned}
			\length_{\omega}(\gamma)&= \int_0^1|\gamma'(t)|_{\omega^{\log}_\lambda} dt\\
			&= \int_0^1 \sqrt{(\omega^{\log}_\lambda)_{11 }} dt%\\
			%&= \frac{1}{2}\int_{-\infty}^0 e^{-t}\sqrt{w''(t)} dt
		\end{aligned}
	\end{equation}
	In this case:
	\begin{align*}
		\length_{\omega^{\log}_\lambda}(\gamma)&= \int_0^1|\gamma'(t)|_{\omega^{\log}_\lambda} dt\\
		&= \int_0^1 \sqrt{(\omega^{\log}_\lambda)_{11}} dt\\ 
		&= \int_0^1  \sqrt{\frac{|z_1|^4+2\lambda|z_1|^2+2\lambda^2}{(|z_1|^2+\lambda)^2}} dt \\
		&= \int_0^1  \sqrt{1 + \frac{\lambda^2}{(|z_1|^2+\lambda)^2}} dt \to 1, \quad \lambda \to 0.
	\end{align*}
	This means that there is no gravitational well at $0\in \C^2$ for $\omega^{\log}_\lambda$ as $\lambda \to 0$, and we obtain global GH convergence to $(\R^4, d_{\eucl})$ with the induced length metric $d_{\eucl}$. By changing the factor of $\lambda$ multiplied by $\log(r^2+\lambda)$ to $h(\lambda)$, where $h(\lambda)\to 0$ as $\lambda\to 0$, we ensure that $h(\lambda)\leq \frac{3}{2}\lambda$ to preserve the nonnegative scalar curvature. Since the Burns metric is smooth when blowing up $0$, we can consider $h(\lambda)$ as correlated with the diameter of the $\CP^1$ at the blowup. Therefore, as $\lambda\to 0$, we shrink the diameter of the blowup and end up with Euclidean space in the limit.

	\subsection{Smoothings of a $\log$-singularity with Ricci curvature bounded from below}\label{SectionLogSingRicciPos}
	
	For this family, consider instead of \eqref{OmegaLog} the metric
	\begin{equation}\label{OmegaLog2}
		(\omega^{\log}_\lambda)' = i\del\delbar (r^2 + \lambda\log (r^2 + 1)) = \begin{pmatrix}
			\frac{\lambda + 1 + r^2 + \lambda|z_2|^2 + r^4}{(r^2 + 1)^2} & - \frac{\lambda \overline{z_1}z_2}{(r^2 + 1)^2} \\
			- \frac{\lambda z_1\overline{z_2}}{(r^2 + 1)^2} & \frac{\lambda + 1 + r^2 + \lambda|z_1|^2 + r^4}{(r^2 + 1)^2}
		\end{pmatrix}.
	\end{equation}
	By the same computations as Section \ref{SectionLogSing} this metric has Ricci curvature bounded from below but otherwise has nonnegative scalar curvature, mass converging to 0, and converging to Euclidean space in the GH sense. Therefore, one can apply Theorems \ref{CorollaryStabNoRic} and \ref{CorollaryStabRicciBound} to this family. Unlike the other two, this family does not develop a singularity at $0\in \C^2$. 
	
	\subsection{Kähler Metrics Arising as Variations of The Euclidean Volume Form} 
	
	The next family arises as a solution to the complex Monge-Ampère equation with a spherical volume form that decays to the Euclidean volume form at infinity. Since the potential only depends on the radial direction $r$, the complex Monge-Ampère equation becomes an ordinary differential equation that can be explicitly solved. We will only write down one family of solutions with nonnegative scalar curvature, but many more can be found using this method.
	
	To start, we consider the volume form:
	\begin{equation*}
		f(r^2) \dvol_{\eucl}
	\end{equation*}
	where $f$ is a positive function that only depends on $r$. There is an explicit solution for the potential such that $i\del\delbar \phi$ is a Kähler metric with the given volume form. The potential is given by:
	\begin{equation}\label{PotentialSol}
		\phi(r) = \int_{r_0}^{r^2} \frac{1}{t} \left(\beta_0 + \int_{t_0}^s m t^{m-1} f(s) ds \right)dt
	\end{equation}
	where $r_0$, $t_0$, and $\beta_0$ are constants with $r_0$, $t_0 \geq 0$ and $\beta_0 \in \R$. For this discussion, we set $r_0 = t_0 = \beta_0 = 0$.\\
	
	Consider the function
	\begin{equation*}
		f_\lambda(r) \coloneqq \frac{r^2 + 10\lambda}{r^2 + \lambda}
	\end{equation*}
	to obtain a family of spherically symmetric volume forms decaying to the standard one at infinity. Write the solution of \eqref{PotentialSol} as $\phi_\lambda$, and denote the associated metric by
	\begin{align*}
		&\omega^{\vol}_\lambda = i\del\delbar \phi_\lambda =\small \frac{\sqrt 2}{\sqrt{g(r)}r^4(r^2+\lambda)} \times \\
		&\small \begin{pmatrix}
			|z_2|^2(r^2+ \lambda)g(r) + r^4|z_1|^2(\frac{1}{2}r^2+5\lambda) & \overline{z_1} z_2\left(-\left(r^2 + \lambda \right)g(r) + r^4 \left(\frac{1}{2}r^4 + 5\lambda \right) \right)\\
			z_1\overline{z_2}\left(-\left(r^2 + \lambda \right)g(r) + r^4 \left(\frac{1}{2}r^4 + 5\lambda \right) \right) & |z_1|^2(r^2+ \lambda)g(r) + r^4|z_2|^2(\frac{1}{2}r^2+5\lambda)
		\end{pmatrix},
	\end{align*}
	where
	\begin{equation*}
		g(r) = \frac{1}{2}r^4 + 9\lambda r^2 -9\lambda^2 \left(\log(r^2+\lambda)-\log(\lambda) \right)>0.
	\end{equation*}
	Again by considering the line ${z_2=0}$ and reading off the eigenvalues on the diagonals we conclude that all $\omega^{\vol}_\lambda$ are positive definite by $\text{U}(n)$-symmetry and hence Kähler metrics. Finally, $\omega_\lambda^{\vol} = \omega_{\eucl} + \OO(r^{-2})$ so the family is also AE. \\
	
	At $z_2=0$, or equivalently after diagonalizing, the Ricci curvature is
	\begin{equation*}
		\Ric_{\omega^{\vol}_\lambda} = \begin{pmatrix}
			\frac{90\lambda^3 - 9\lambda r^4}{(r^2+\lambda)^2(r^2+10\lambda)^2} & 0 \\ 0 & \frac{9\lambda}{(r^2+\lambda)(r^2+10\lambda)}
		\end{pmatrix}.
	\end{equation*}
	The second eigenvalue is nonnegative and unbounded from above, and the first one has infimum at
	\begin{equation*}
		r_{\min} = \sqrt{10^{\frac{2}{3}}\lambda + 10^{\frac{1}{3}}\lambda}.
	\end{equation*}
	Plugging this back into the first eigenvalue of $\Ric_{\omega^{\vol}_\lambda}$, the infimum becomes
	\begin{equation*}
		0>\frac{9(10-10^{\frac{1}{3}} - 10^{\frac{2}{3}})}{\lambda(10^{\frac{1}{3}}+ 10^{\frac{2}{3}} + 1)^2(10^{\frac{2}{3}} + 10^{\frac{1}{3}} + 10)^2} = \OO(\lambda^{-1})
	\end{equation*}
	as $\lambda \to 0$, so the Ricci curvature is not bounded from above or below.
	
	The scalar curvature is
	\begin{equation*}
		R_{\omega_\lambda^{\vol}} = \frac{9\lambda \left(r^8 + 20\lambda r^6 + 100\lambda^2 r^4 - 2g(r)r^4 + 20g(r)\lambda^2  \right)}{2r^2(r^2+\lambda)(r^2+10\lambda)^3g(r)} = \OO(r^{-6})
	\end{equation*}
	and bounded for every $\lambda>0$, hence $R_{\omega_\lambda^{\vol}}\in L^1_{\omega_\lambda^{\vol}}(\C^2)$. It is also positive by the estimate
	\begin{equation*}
		\frac{1}{2} r^4 \leq g(r) \leq \frac{1}{2}r^4 + 9\lambda r^2.
	\end{equation*}
	Using these estimates again and only keeping the highest and lowest order polynomial in the numerator and denominator by interpolation, we find that
	\begin{equation*}
		\m( \omega^{\vol}_\lambda) = \int_{\C^2} R_{\omega^{\vol}_\lambda} ({\omega^{\vol}_\lambda})^2 \leq C \int_0^\infty \frac{\lambda^2r^6 + \lambda^4r^2}{r^{12} + \lambda^4 r^4} r^3 dr =  \OO(\lambda)
	\end{equation*}
	for $\lambda\to 0$ and again using a computer for the computation. As the Ricci curvature is unbounded from below but the mass converges to 0, we can apply Theorem \ref{CorollaryStabNoRic} but not Theorem \ref{CorollaryStabRicciBound} to the family $(\omega_\lambda^{\vol})$. This allows us to conclude that the family GH converges to $(\mathbb{R}^4, d_{\text{eucl}})$ after removing suitable subsets along the family.
	\\
	
	As for $\omega_\lambda^{\log}$, we show directly that $(\C^2,\omega_\lambda^{\vol})$ GH converges to Euclidean space everywhere. As $\omega_\lambda^{\log}$ converges to $\omega_{\eucl}$ in $C^\infty_{\loc}(\C^2\setminus\{0\})$, it is enough to check this in a neighborhood of $0$. By spherical symmetry, take $p = (1,0)$ and the curve $\gamma\colon [0,1]\to \C^2, t\mapsto (t,0)$. Then for $z_2 = 0$:
	\begin{equation*}
		(\omega_\lambda^{\log})_{11} = \frac{|z_1|^2(|z_1|^2+10\lambda)}{\sqrt{2g(|z_1|)}(|z_1|^2 + \lambda)} \leq \frac{|z_1|^2(|z_1|^2+10\lambda)}{|z_1|^2(|z_1|^2 + \lambda)} = \frac{(|z_1|^2+10\lambda)}{(|z_1|^2 + \lambda)},
	\end{equation*}
	so
	\begin{align*}
		\length_{\omega^{\vol}_\lambda}(\gamma)&= \int_0^1|\gamma'(t)|_{\omega^{\vol}_\lambda} dt\\
		&= \int_0^1 \sqrt{(\omega^{\log}_\lambda)_{11}} dt\\ 
		&\leq \int_0^1  \sqrt{\frac{(|z_1|^2+10\lambda)}{(|z_1|^2 + \lambda}} dt \to 1, \quad \lambda \to 0.
	\end{align*}
	By replacing $g(|z_1|)$ by the upper bound $\frac{1}{2}|z_1|^4 + 9\lambda |z_1|^2$ above to obtain a lower bound converging to 1 as $\lambda\to 0$, it follows that
	\begin{equation*}
		\length_{\omega^{\vol}_\lambda}(\gamma) \to 1, \lambda\to 0,
	\end{equation*}
	and hence the sequence $(\omega^{\vol}_\lambda)$ converges in the GH sense to $(\R^4, d_{\eucl})$.

	\vspace{10mm}
	\printbibliography

\end{document}